\documentclass[10pt]{article}
\usepackage{titling}
\usepackage{blindtext}
\usepackage {amssymb,latexsym}
\usepackage {amsmath}
\usepackage{graphicx}
\usepackage{comment}
\usepackage{amsthm}
\usepackage{relsize}
\usepackage[margin=1in]{geometry}
\usepackage[latin1]{inputenc}
\usepackage{fancyhdr}
\usepackage{pgfplots}
\pgfplotsset{compat=1.14}
\usepackage{amscd}
\usepackage{hyperref}
\usepackage{graphicx}
\usepackage{relsize}
\usepackage{newlfont}
\usepackage{amssymb}
\usepackage{youngtab}
\usepackage{young}
\usepackage{stmaryrd} 
\usepackage{mathtools}
\usepackage{tikz}
\usetikzlibrary{shapes, backgrounds} 
\usepackage{tikz-cd}
\usepackage{amsthm}
\usepackage{authblk} 
\usepackage{enumerate,enumitem}
\usepackage{mathrsfs}
\usepackage{soul} 
\usepackage[scr=rsfs,cal=boondox]{mathalfa} 

\newtheorem {lemma} {Lemma}
\newtheorem {proposition} [lemma] {Proposition}
\newtheorem {theorem} [lemma] {Theorem}
\newtheorem {corollary} [lemma] {Corollary}
\newtheorem {definition}[lemma] {Definition}
\theoremstyle{definition}
\newtheorem{remark}[lemma]{Remark}
\newtheorem{example}[lemma]{Example}

\excludecomment{invisible}
\newcommand{\g}{\mathfrak{g}}
\newcommand{\wt}{\vartriangleright}
\newcommand{\bt}{\blacktriangleright}
\renewcommand{\r}{\mathfrak{r}}
\newcommand{\s}{\mathfrak{s}}
\renewcommand{\l}{\mathfrak{l}}

\newcommand{\R}{\mathfrak{R}}
\renewcommand{\L}{\mathfrak{L}}
\newcommand{\Der}{\mathrm{Der}}

\newcommand{\ten}{\otimes}
\renewcommand{\wp}{\diamond}
\newcommand{\ot}{\otimes}
\newcommand{\ab}[2]{\langle #1,#2\rangle}
\newcommand{\abd}[2]{\langle #1,#2\rangle^{\bullet}}
\DeclareFontFamily{U}{mathb}{}
\DeclareFontShape{U}{mathb}{m}{n}{<-5.5> mathb5 <5.5-6.5> mathb6 
<6.5-7.5> mathb7 <7.5-8.5> mathb8 <8.5-9.5> mathb9 <9.5-11> mathb10 
<11-> mathb12}{}
\DeclareSymbolFont{mathb}{U}{mathb}{m}{n}
\DeclareFontSubstitution{U}{mathb}{m}{n}
\DeclareMathSymbol{\bp}{\mathbin}{mathb}{"0C}

\makeatletter
\@addtoreset{equation}{section}

\makeatother

\begin{document}
\setlength{\parindent}{0pt} 
\title {\textbf{An infinitesimal deformation of the post-Lie and post-Hopf algebra correspondence}}

\author[a]{Andrea Rivezzi}
\author[b]{Andrea Sciandra}
\author[a]{Thomas Weber}

\affil[a]{{\sl
{Mathematical Institute of Charles University}}

{\sl Sokolovsk\'a 49/83, 186 75 Prague 8, Czech Republic}

{~}}

\affil[b]{{\sl
{D\'epartement de Math\'ematiques, Universit\'e Libre de Bruxelles}}
	
{\sl Boulevard du Triomphe, B-1050 Bruxelles, Belgium}  }
  

\date{}
\maketitle
\vspace{-1cm}
\begin{abstract}
\noindent
We describe infinitesimal deformations of post-Lie algebras and post-Hopf algebras and prove that the adjunction given by the universal enveloping algebra and primitive elements functors is compatible with the infinitesimal structure. When restricted to connected and cocommutative infinitesimal post-Hopf algebras, this becomes an equivalence of categories, which constitutes an extension of the Cartier--Milnor--Moore theorem. 
We classify infinitesimal post-Lie structures on $\mathfrak{sl}(2)$, and discuss a class of infinitesimal post-Lie algebras emerging from flat connections with covariantly-constant torsion. Moreover, we classify infinitesimal post-Hopf structures on Sweedler's Hopf algebra. Cocommutative infinitesimal post-Hopf algebras induce a Hochschild 2-cocycle on the associated subadjacent Hopf algebra. Finally, we prove that the quadratic operad of infinitesimal post-Lie algebras is Koszul, by using a filtered distributive law between the operads of Lie algebras and bi-magmas.
\end{abstract}
2020 Mathematics Subject Classification: 17B05, 16T05, 16S30, 18M70. \\ 
Keywords: (Infinitesimal) post-Lie algebras, (Infinitesimal) post-Hopf algebras, Operads.

\tableofcontents

\section*{Introduction}
Post-Lie algebras have been introduced by B. Vallette in \cite{Vallette} in an operadic context as the Koszul dual of commutative trialgebras. Since then, post-Lie algebras have gained broad recognition since they also appear in geometric frameworks, such as the numerical integration of manifolds \cite{CE-FM-K} and in the presence of flat connections on smooth manifolds with covariantly-constant torsion \cite{ML}, but also in the renormalization of stochastic partial differential equations \cite{BST}. Algebraically, a \emph{post-Lie algebra} is a Lie algebra $(\mathfrak{g},[\cdot,\cdot])$ endowed with an additional bilinear operation $\wt\colon\mathfrak{g}\times\mathfrak{g}\to\mathfrak{g}$ such that its restriction to the first component is a derivation, satisfying
$$
[x,y] \wt z = x \wt (y \wt z) - y \wt (x \wt z) - (x \wt y) \wt z + (y \wt x) \wt z
$$
for all $x,y,z\in\mathfrak{g}$. The right-hand side of the above axiom constitutes a pre-Lie algebra condition, and thus post-Lie algebras with trivial Lie bracket recover pre-Lie algebras. Moreover, from an arbitrary post-Lie algebra $(\mathfrak{g},[\cdot,\cdot],\wt)$ one constructs another Lie algebra structure
	\begin{equation}
		[\cdot,\cdot]_{\wt}:=[\cdot,\cdot]+\wt-\wt^\mathrm{op}
	\end{equation}
on the vector space $\mathfrak{g}$, the so-called \emph{subadjacent Lie algebra}, where $\wt^\mathrm{op}$ denotes the precomposition of $\wt$ with the flip. For $[\cdot,\cdot]=0$ this is the well-known Lie bracket of a pre-Lie algebra.

On the universal enveloping algebra $\mathrm{U}(\mathfrak{g})$ of a post-Lie algebra $(\mathfrak{g},[\cdot,\cdot]\wt)$, one constructs a coalgebra map $\wp\colon\mathrm{U}(\mathfrak{g})\otimes\mathrm{U}(\mathfrak{g})\to\mathrm{U}(\mathfrak{g})$ by suitably extending the map $\wt$, which structures $\mathrm{U}(\mathfrak{g})$ as a cocommutative \emph{post-Hopf algebra}. The notion of post-Hopf algebra has been introduced in \cite{LST} for (not necessarily cocommutative) Hopf algebras $H$ with coalgebra maps $\wp\colon H\otimes H\to H$ satisfying appropriate axioms (see Definition \ref{def:PH}). From a \textit{cocommutative} post-Hopf algebra $(H,\wp)$ one constructs a Hopf algebra $H_{\wp}$, the \emph{subadjacent Hopf algebra}, which coincides with $H$ as a coalgebra, but which is endowed with a $\wp$-``twisted'' product $\star_\wp$ and antipode $S_\wp$. The subadjacent Hopf algebra is compatible with the subadjacent Lie algebra, in the sense that the diagram
	\begin{equation*}
	\begin{tikzcd}
		(\mathfrak{g},[\cdot,\cdot],\wt) \arrow[rr, maps to, "\mathrm{U}"] \arrow[d, maps to, swap, "\text{subadjacent}"] & & (\mathrm{U}(\mathfrak{g}),\wp) \arrow[d, maps to, "\text{subadjacent}"]\\
		\mathfrak{g}_{\wt}:=(\mathfrak{g},[\cdot,\cdot]_{\wt}) \arrow[rr, maps to, "\mathrm{U}"] & & \mathrm{U}(\mathfrak{g}_{\wt})\cong\mathrm{U}(\mathfrak{g})_{\wp}
	\end{tikzcd}
	\end{equation*}
	commutes, see again \cite{LST}. The assumption of cocommutativity is also crucial in the realm of the \textit{quantum Yang--Baxter equation} (QYBE). In fact, given a cocommutative post-Hopf algebra $(H,\wp)$, the following linear map is a solution to the braid equation $\sigma_{12}\sigma_{23}\sigma_{12}=\sigma_{23}\sigma_{12}\sigma_{23}$ on the vector space $H$:
\begin{equation}
\sigma(a\ot b):=(a_{1}\wp b_{1})\ot(S_{\wp}(a_{2}\wp b_{2})\star_{\wp}a_{3}\star_{\wp}b_{3}),\qquad \text{for all}\ a,b\in H.
\end{equation}
Equivalently, $\tau\sigma$ satisfies the QYBE, with $\tau$ the flip map of vector spaces. In the cocommutative setting, post-Hopf algebras are in fact equivalent to Hopf braces introduced in \cite{AGV} and to matched pairs of actions on (cocommutative) Hopf algebras. 

Conversely, the subspace of primitive elements $\mathrm{P}(H)$ of a post-Hopf algebra $H$ becomes a post-Lie algebra when endowed with the commutator. The universal enveloping algebra and primitives determine an adjoint pair of functors 
\begin{equation}\label{adj:LieHopf}
	\begin{tikzcd}
		\mathrm{PLie} \arrow[rr, bend left, "\mathrm{U}"] & \perp & \mathrm{PHopf} \arrow[ll, bend left, "P"]
	\end{tikzcd}
	\end{equation}
	for the categories $\mathrm{PLie}$ of post-Lie algebras and $\mathrm{PHopf}$ of post-Hopf algebras. If the base field has characteristic 0, a \emph{Cartier--Milnor--Moore theorem} \cite{MilnorMoore} can be established for cocommutative, connected post-Hopf algebras \cite{Catoire,ZD}, namely, the above adjunction becomes an equivalence of categories when replacing $\mathrm{PHopf}$ with this subcategory. Note that, being isomorphic to the category of cocommutative Hopf braces, by the results proven in \cite{GS}, the category of cocommutative post-Hopf algebras is semi-abelian \cite{Semiabelian} and equipped with a \textit{torsion theory} in the sense of \cite{BournGran}, whose torsion subcategory is given by universal enveloping algebras of post-Lie algebras and torsion-free subcategory is given by group algebras over post-groups \cite{BGST}, that are equivalent to skew braces \cite{GuVe}. In particular, for any cocommutative post-Hopf algebra, a short exact sequence can be determined, where the torsion and torsion-free objects belong to the aforementioned categories.
\medskip

\noindent
For these reasons, it is justified to expect additional algebraic structure on a post-Lie algebra to correspond to additional structure on the post-Hopf algebra level and vice versa. We focus on this correspondence and discuss infinitesimal deformations, leading to the notions of \emph{infinitesimal} post-Lie and post-Hopf algebras. We define infinitesimal post-Lie algebras and infinitesimal post-Hopf algebras as post-Lie algebras $(\mathfrak{g},[\cdot,\cdot],\wt)$ and post-Hopf algebras $(H,\wp)$ together with (bi)linear maps $\bt\colon\mathfrak{g}\times\mathfrak{g}\to\mathfrak{g}$ and $\bp\colon H\otimes H\to H$ subject to axioms (see Definitions \ref{def:infinitesimalPostLie} and \ref{def:infinitesimalpostHopf}, respectively), which are tailored in ways such that
	\begin{equation}
	\widetilde{\wt}:=\wt+\hbar\bt\qquad,\qquad
	\widetilde{\wp}:=\wp+\hbar\bp
	\end{equation}
	are \emph{infinitesimal deformations} of the post structures, where $\hbar$ denotes a formal parameter that squares to zero. More precisely, the axioms in Definition \ref{def:infinitesimalPostLie} (respectively, in Definition \ref{def:infinitesimalpostHopf}) hold if and only if $(\mathfrak{g}[\hbar]/(\hbar^2),\widetilde{\wt})$ is a post-Lie algebra (respectively, if and only if $(H[\hbar]/(\hbar^2),\widetilde{\wp})$ is a post-Hopf algebra) in an appropriate category. This is very much in the spirit of \emph{deformation quantization} \cite{BFFLS}, where first-order deformations of commutative algebras give rise to \emph{Poisson algebras}. The motivation to consider such first-order deformations comes from the fact that, in favorable cases, deformations of arbitrary high order are determined by the first order. In the previously mentioned case of deformation quantization, this corresponds to the fact that (equivalence classes of) associative deformations of the pointwise product of functions on a smooth manifold are fully determined by (equivalence classes of) Poisson brackets on the manifold, as proven via the formality theorem of Kontsevich \cite{Kontsevich}. A similar deformation ansatz was recently applied to braided monoidal categories and quasitriangular Hopf algebras in \cite{ABSW,ERSW}.

    One of the main results of this article is to extend the adjunction in \eqref{adj:LieHopf} to the categories of \emph{infinitesimal} post-Lie algebras and \emph{infinitesimal} post-Hopf algebras and to prove an analogue of the Cartier--Milnor--Moore theorem for the category of cocommutative, connected, infinitesimal post-Hopf algebras.
	
	\medskip
	After recalling preliminary material on post-Lie algebras and post-Hopf algebras in Section \ref{sec:Preliminaries}, we introduce infinitesimal post-Lie algebras in Section \ref{sec:IPL}. We give a full classification of infinitesimal post-Lie algebra structures on $\mathfrak{sl}(2)$ in Section \ref{subsection-inf-post-lie-sl2} and discuss a class of infinitesimal post-Lie algebras emerging from flat connections with covariantly-constant torsion in Section \ref{sec:geometry}. As motivated before, we continue by introducing infinitesimal post-Hopf algebras in Section \ref{sec:IPH}. The subadjacent Hopf algebra of a cocommutative infinitesimal post-Hopf algebra is naturally endowed with a Hochschild $2$-cocycle, see Theorem \ref{thm:Hochschild}. We further classify the post-Hopf algebra structures on Sweedler's Hopf algebra in Section \ref{sec:Sweedler}. Then we prove an adjunction theorem 
	\begin{equation}
		\begin{tikzcd}
			\mathrm{IPLie} \arrow[rr, bend left, "\mathrm{U}"] & \perp & \mathrm{IPHopf} \arrow[ll, bend left, "P"]
		\end{tikzcd}
	\end{equation}
	for infinitesimal post-Lie and post-Hopf algebras, see Theorem \ref{thm:main}, and we show that this adjunction becomes an equivalence of categories when restricting $\mathrm{IPHopf}$ to the subcategory of infinitesimal post-Hopf algebras which are cocommutative and connected (over a field of characteristic $0$). In other words, we prove the analogue of the Cartier--Milnor--Moore theorem for infinitesimal post structures. 
    
    Another main result of this paper is achieved in Section \ref{sec:operad}, where we discuss the quadratic operad associated to the algebraic notion of infinitesimal post-Lie algebra, and we prove that it is Koszul. This is achieved by proving that 
    $$
    \mathcal{IPL}\cong\mathscr{L}\circ\mathscr{M}_2
    $$
    is isomorphic (as an $\mathbb{S}$-module) to the composition of the two Koszul operads $\mathscr{L}$ of Lie algebras and $\mathscr{M}_2$ of bi-magmas, and by providing a filtered distributive law between $\mathscr{L}$ and $\mathscr{M}_2$. This closes the circle with the original reference \cite{Vallette}, where post-Lie algebras have been introduced in an operadic context as the Koszul dual of commutative trialgebras. In a final Section~\ref{sec:outlook} we give an outlook on future projects that continue or rely on the results of this article. Among them, we mention the Etingof--Kazhdan quantization of Lie bialgebras \cite{EK}, which we expect to be compatible with post structures.

\section{Preliminaries}\label{sec:Preliminaries}
We recall some preliminary notions and results that will be useful throughout the paper. In the following $\Bbbk$ denotes a field, and we write $\otimes$ for the tensor product of $\Bbbk$-vector spaces. We denote the symmetric group on $n$ elements by $\mathbb{S}_n$, the set of $n,m$ shuffle permutations by $\mathbb{S}_{n,m}$, and the set of $n,m,k$ shuffle permutations by $\mathbb{S}_{n,m,k}$. We shall denote Hopf algebras by $H= (H,\mu,\eta,\Delta,\varepsilon,S)$. We sometimes write $\mu(a \ten b) = a \cdot b$ (or, more simply, $\mu(a \ten b)= ab$) and use the Sweedler notation $\Delta(a)=a_1 \ten a_2$. We denote the flip map of vector spaces by $\tau$.

\subsection{Post-Lie algebras}\label{sec:PLA} 
\subsubsection{Definition and main properties}
We first recall the definition of post-Lie algebra introduced by B. Vallette in \cite{Vallette}.

\begin{definition}\label{def:PL}
A \textbf{post-Lie algebra} consists of a triple $(\g,[\cdot,\cdot], \wt)$, where $(\g,[\cdot,\cdot])$ is a Lie algebra, and $\wt: \g \times \g \to \g$ is a bilinear map satisfying
\begin{align}
x \wt [y,z] &= [x \wt y,z] + [y,x \wt z] \label{eq:postlie:1}\\
[x,y] \wt z &= x \wt (y \wt z) - y \wt (x \wt z) - (x \wt y) \wt z + (y \wt x) \wt z \label{eq:postlie:2}
\end{align}
for all $x,y,z \in \g$.  

A morphism between two post-Lie algebras $(\g,[\cdot,\cdot], \wt)$ and  $(\g',[\cdot,\cdot]', \wt')$ is a morphism of Lie algebras $f: (\g,[\cdot,\cdot]) \to (\g',[\cdot,\cdot]')$ satisfying $f(x \wt y) = f(x) \wt' f(y)$.

We shall denote the category of post-Lie algebras by $\mathrm{PLie}$.
\end{definition}

Given a triple $(\g,[\cdot,\cdot], \wt)$, where $(\g,[\cdot,\cdot])$ is a Lie algebra and $\wt: \g \times \g \to \g$ is a bilinear map, one can consider the linear map 
\begin{align}
D_x :\g \to \g, \quad y \mapsto x \wt y.
\end{align}
It is easy to see that Equation \eqref{eq:postlie:1} is equivalent to the fact that $D_x$ is a derivation of $\mathfrak{g}$, i.e., $D_x\in\Der(\g)$, for all $x\in\mathfrak{g}$. 
To any post-Lie algebra $(\g,[\cdot,\cdot], \wt)$ one assigns the Lie algebra $(\g,[\cdot,\cdot]_\wt)$, whose Lie bracket is 
\begin{align}
[x,y]_\wt := [x,y] + x \wt y - y \wt x,
\end{align}
the so-called \textbf{subadjacent Lie algebra}. In this way,
equation \eqref{eq:postlie:2} becomes equivalent to 
\begin{align}
\label{eq:interpretation-post-Lie}
[D_x, D_y] = D_{[x,y]_\wt}
\end{align}
for all $x,y\in\mathfrak{g}$. Therefore, \eqref{eq:postlie:1} and \eqref{eq:postlie:2} together are equivalent to the condition of $D: (\g,[\cdot,\cdot]_\wt) \to \Der(\g)$, $x \mapsto D_x$, being a morphism of Lie algebras.

\begin{example}
If $(\mathfrak{g},[\cdot,\cdot])$ is abelian, then $(\mathfrak{g},[\cdot,\cdot],\wt)$ is a post-Lie algebra if and only if $(\mathfrak{g},\wt)$ is a \textbf{pre-Lie algebra}, i.e., a vector space $\mathfrak{g}$ equipped with a bilinear map $\wt \colon \mathfrak{g}\times\mathfrak{g}\to\mathfrak{g}$ such that 
\begin{equation}\label{eq:preLie}
    \big(x\wt y)\wt z-(y\wt x)\wt z=x\wt(y\wt z)-y\wt(x\wt z)
\end{equation}
holds for all $x,y,z\in\mathfrak{g}$. Then, the Lie bracket of the subadjacent Lie algebra reads $[x,y]_\wt=x\wt y-y\wt x$ 
for all $x,y\in\mathfrak{g}$. Examples of pre-Lie algebras include associative algebras (with $\wt$ given by the product) and vector fields on a smooth manifold (where $\rhd$ is given by a flat, torsion-free connection).
\end{example}

\begin{remark}
\label{remark-simple}
Let $(\g, [\cdot,\cdot])$ be a simple Lie algebra. It is easy to see --using that all the derivations of $(\mathfrak{g},[\cdot,\cdot])$ are inner-- that $\g$ admits a post-Lie structure $\wt$ if and only if there exists a morphism of Lie algebras $\varphi: (\g,[\cdot,\cdot]_\wt) \to (\g, [\cdot,\cdot])$ such that $x \wt y = [\varphi(x),y]$. Note also that the condition of $\varphi: (\g,[\cdot,\cdot]_\wt) \to (\g, [\cdot,\cdot])$ being a Lie algebra morphism reads
\begin{equation}\label{eq:Rota-Baxter-operator}
[\varphi(x), \varphi(y)] = \varphi \big( [x,y] + [\varphi(x), y] + [x,\varphi(y)]\big).
\end{equation}
A morphism satisfying Equation \eqref{eq:Rota-Baxter-operator} is known in the literature as a \emph{Rota--Baxter operator of weight $1$}. For such a morphism $\varphi$, there are three possibilities for the kernel:
\begin{itemize}
\item[(i)] $\ker(\varphi)=\g$. In this case one obtains $\wt = 0$, i.e., the trivial post-Lie structure on $\g$, which gives $[x,y]_\wt=[x,y]$.
\item[(ii)] $\ker(\varphi) = 0$. This is satisfied only when $\varphi = - \mathrm{id}$, i.e.\ when $x \wt y = - [x,y]$, which gives $[x,y]_\wt = -[x,y]$. 
\item[(iii)] $0 \subset \ker(\varphi) \subset\g$. This is the case where non-trivial post-Lie structures appear.  
Furthermore, it is possible to show that $(\g,[\cdot,\cdot]_\wt)$ is never simple (see e.g.\ \cite{BDV}). 
\end{itemize}
\end{remark}

In the next sections, we shall discuss the examples of post-Lie algebras we are mainly interested in, such as post-Lie structures on the complex simple Lie algebra $\s\l(2)$, and post-Lie structures arising from differential geometry.

\subsubsection{Classification of post-Lie structures on $\s\l(2)$}\label{subsec:postLiesl2}
\label{section-post-lie-sl2}
In this subsection, we recall the 2-parameter family of post-Lie algebra structures on the complex simple Lie algebra $\s\l(2)$ from D. Burde, K.  Dekimpe, and  K. Vercammen \cite{BDV}.
Denote by $\s\l(2)$ the simple Lie algebra of all $2 \times 2$ traceless matrices with complex entries. Its standard presentation with generators $e,f,h$ is
\begin{align}
\label{eq:sl2-relations}
[h,e] = 2e, \quad [h,f]= -2f, \quad [e,f]=h.
\end{align}

Let $\alpha,\beta$ be two complex variables with $\alpha \neq \beta$. Then the assignments $e \wt e = e \wt f = e \wt h = 0 $ and
\begin{align*}
f \wt e &= - \alpha e + h, 
&\quad f \wt f &= \alpha f + \frac{\alpha^2 - \beta^2}{4} h, 
&\quad f \wt h &= \frac{\beta^2 - \alpha^2}{2} e - 2 f \\
h \wt e &= \frac{2\beta}{\alpha - \beta} e, 
&\quad h \wt f &= - \frac{2 \beta}{\alpha - \beta} f - \beta h, 
&\quad h \wt h &= 2 \beta e
\end{align*}
endow $\s\l(2)$ with a post-Lie structure.
The corresponding subadjacent Lie algebra is given on generators by 
\begin{align*}
[e,f]_\wt &= \alpha e,\quad 
[h,e]_\wt = \frac{2 \alpha}{\alpha - \beta} e,\quad
[h,f]_\wt = - \Big( \frac{\beta^2 - \alpha^2}{2} e + \frac{2 \beta}{\alpha - \beta} f + \beta h\Big).
\end{align*}
We shall denote such Lie algebra by $\s\l(2,\alpha,\beta)$. It has been shown in \cite{BDV} that, for $\beta \neq 0$ and $\lambda = - \frac{\alpha}{\beta}$, there is an isomorphism $\s\l(2,\alpha,\beta) \cong \r_{3,\lambda}$, the latter denoting the (solvable) Lie algebra with generators $\{ e_1,e_2,e_3\}$ and Lie brackets $[e_1,e_2] = e_2, [e_1,e_3] = \lambda
 e_3, [e_2,e_3] =0$.

\begin{remark}
Since $\s\l(2)$ is simple, then, following Remark \ref{remark-simple}, there exists a morphism of Lie algebras $\varphi: \s\l(2,\alpha,\beta) \to \s\l(2)$ such that
\[ e \wt x = [\varphi(e),x], \quad f \wt x = [\varphi(f),x], \quad h \wt x = [\varphi(h),x]\]
for all $x \in \s\l(2)$. A lengthy but elementary computation shows that 
\begin{align}
\label{eq:phi-sl2}
\varphi(e) = 0, \quad \varphi(f) = \frac{\alpha^2-\beta^2}{4} e -f- \frac{\alpha}{2} h, \quad \varphi(h) = -\beta e + \frac{\beta}{\alpha - \beta} h.
\end{align}
\end{remark}
The two-parameter family of post-Lie structures on $\s\l(2)$ described above, together with the trivial cases $\wt=0$ and $\wt = -[\cdot,\cdot]$ (see Remark \ref{remark-simple}), provides an exhaustive classification (up to isomorphism) of post-Lie structures on $\s\l(2)$, see \cite[Proposition 4.7]{BDV}.

\subsubsection{Post-Lie algebras arising from differential geometry}\label{sec:geompost}

We now recall the geometric origin of post-Lie algebras. In fact, they appear in differential geometry in the context of flat connections with covariantly-constant torsion, see e.g.\ \cite[Sec. 2.1]{ML}.

Let $M$ be a smooth manifold together with a covariant derivative 
$$
\nabla\colon\Gamma^\infty(TM)\times\Gamma^\infty(TM)\to\Gamma^\infty(TM),
$$ 
where $\Gamma^\infty(TM)$ denotes the $\mathscr{C}^\infty(M)$-module of smooth sections of the tangent bundle $TM$. We view $\Gamma^\infty(TM)$ as an (infinite-dimensional) Lie algebra with respect to the Lie bracket $[X,Y]$ of vector fields $X,Y\in\Gamma^\infty(TM)$. The covariant derivative is function-linear in the first argument, i.e., $\nabla_{f\cdot X}Y=f\cdot\nabla_XY$, and satisfies the Leibniz rule $\nabla_X(f\cdot Y)=\mathscr{L}_X(f)\cdot Y + f\cdot\nabla_XY$, for all $f\in\mathscr{C}^\infty(M)$ and $X,Y\in\Gamma^\infty(TM)$, where $\mathcal{L}$ is the Lie derivative. Recall that $\nabla$ satisfies, for all $X,Y,Z\in\Gamma^\infty(TM)$, the \textit{first Bianchi identity}
\begin{equation}\label{eq:Bianchi}
    \mathrm{Tor}^\nabla(\mathrm{Tor}^\nabla(X,Y),Z)+\circlearrowright(X,Y,Z)
    =\mathrm{R}^\nabla(X,Y)Z-\nabla_X(\mathrm{Tor}^\nabla)(Y,Z)+\circlearrowright(X,Y,Z),
\end{equation}
where $\mathrm{Tor}^\nabla(X,Y):=\nabla_XY-\nabla_YX-[X,Y]$ denotes the \textit{torsion} of $\nabla$, $\mathrm{R}^\nabla(X,Y)Z:=\nabla_X\nabla_YZ-\nabla_Y\nabla_XZ-\nabla_{[X,Y]}Z$ is the \textit{curvature} of $\nabla$, and $\circlearrowright(X,Y,Z)$ stands for a sum of cyclic permutations in $X,Y,Z$.

Assuming that $\nabla$ is flat, i.e., that $\mathrm{R}^\nabla=0$, and that the torsion of $\nabla$ is covariantly constant, i.e., $\nabla_X(\mathrm{Tor}^\nabla)=0$ for all $X\in\Gamma^\infty(TM)$, it follows from \eqref{eq:Bianchi} that
\begin{equation}\label{eq:TorsionLie}
    -\mathrm{Tor}^\nabla\colon\Gamma^\infty(TM)\times\Gamma^\infty(TM)\to\Gamma^\infty(TM)
\end{equation} 
defines a Lie bracket on $\Gamma^\infty(TM)$ (the negative sign is not important for the Lie algebra structure, but turns out to be crucial in what follows). In this case,
\begin{equation}\label{eq:NablaAction}
    \rhd\colon\Gamma^\infty(TM)\times\Gamma^\infty(TM)\to\Gamma^\infty(TM),
    \qquad X\rhd Y:=\nabla_XY
\end{equation}
gives to $(\Gamma^\infty(TM),-\mathrm{Tor}^\nabla)$ the structure of a post-Lie algebra. In fact, \eqref{eq:postlie:1} is precisely the assumption that $\mathrm{Tor}^\nabla$ is covariantly constant, i.e.,
\begin{align*}
    0
    &=\nabla_X(\mathrm{Tor}^\nabla)(Y,Z)
    =\nabla_X(\mathrm{Tor}^\nabla(Y,Z))-\mathrm{Tor}^\nabla(\nabla_XY,Z)-\mathrm{Tor}^\nabla(Y,\nabla_XZ)\\
    &=X\rhd\mathrm{Tor}^\nabla(Y,Z)-\mathrm{Tor}^\nabla(X\rhd Y,Z)-\mathrm{Tor}^\nabla(Y,X\rhd Z),
\end{align*}
while flatness of $\nabla$ implies
\begin{align*}
    0
    &=\mathrm{R}^\nabla(X,Y)Z\\
    &=\nabla_X\nabla_YZ-\nabla_Y\nabla_XZ-\nabla_{[X,Y]}Z\\
    &=\nabla_X\nabla_YZ-\nabla_Y\nabla_XZ-\nabla_{\nabla_XY}Z+\nabla_{\nabla_YX}Z-\nabla_{-\mathrm{Tor}^\nabla(X,Y)}Z\\
    &=X\rhd(Y\rhd Z)-Y\rhd(X\rhd Z)
    -(X\rhd Y)\rhd Z+(Y\rhd X)\rhd Z
    -\nabla_{-\mathrm{Tor}^\nabla(X,Y)}Z,
\end{align*}
which is \eqref{eq:postlie:2}. In summary, $(\Gamma^\infty(TM),-\mathrm{Tor}^\nabla,\nabla)$ is a post-Lie algebra. The corresponding subadjacent Lie algebra
$$
-\mathrm{Tor}^\nabla(X,Y)+\nabla_XY-\nabla_YX=[X,Y]
$$
is the usual Lie algebra structure on $\Gamma^\infty(TM)$.

In case $\nabla$ is a flat equivariant connection on a Lie group $G$ with covariantly constant torsion, the above construction induces a post-Lie algebra structure on $\mathfrak{g}=T_eG$, the tangent space of $G$ at the neutral element, see e.g. \cite[Sec. 2.1]{LBG}. Namely, by the equivariance of $\nabla$, the maps \eqref{eq:TorsionLie} and \eqref{eq:NablaAction} induce maps $-\mathrm{Tor^\nabla\colon}\mathfrak{g}\times\mathfrak{g}\to\mathfrak{g}$, $\rhd\colon\mathfrak{g}\times\mathfrak{g}\to\mathfrak{g}$ and it is straightforward to show that $(\mathfrak{g},-\mathrm{Tor}^\nabla,\rhd)$ is a post-Lie algebra. The subadjacent Lie algebra is the usual Lie algebra structure on $\mathfrak{g}$ induced from the Lie group $G$.

\subsection{Post-Hopf algebras}

\subsubsection{Definition and main properties}

Recall that, for any Lie algebra $\mathfrak{g}$, the universal enveloping algebra $\mathrm{U}(\mathfrak{g})$ is naturally a Hopf algebra with coproduct, counit and antipode determined on primitive elements $x\in\mathfrak{g}$ by $\Delta(x)=x\otimes 1+1\otimes x$, $\varepsilon(x)=0$ and $S(x)=-x$. It is therefore natural to ask if there is an algebraic structure induced by a post-Lie algebra. The answer was given by Y. Li, Y. Sheng, and R. Tang \cite{LST} as they defined the so-called post-Hopf algebras:

\begin{definition}\label{def:PH}
A \textbf{post-Hopf algebra} is a pair $(H,\wp)$, where $H$ is a Hopf algebra and $\wp:H\ot H\to H$ is a coalgebra morphism satisfying the following equalities for all $a,b,c \in H$:
\begin{align}
a\wp(b\cdot c)&=(a_{1}\wp b)\cdot(a_{2}\wp c)\label{wpproduct},\\
a\wp(b\wp c)&=(a_{1}\cdot(a_{2}\wp b))\wp c\label{wpdefassoci},
\end{align}
and such that the morphism $\alpha_{\wp}:H\to\mathrm{End}(H)$, $a\mapsto a\wp(-)$ is convolution invertible in $\mathrm{Hom}(H,\mathrm{End}(H))$, i.e.\ there exists a (necessarily) unique $\beta_{\wp}:H\to\mathrm{End}(H)$ such that 
\[
(\alpha_{\wp}a_{1})\circ(\beta_{\wp}a_{2})=(\beta_{\wp}a_{1})\circ(\alpha_{\wp}a_{2})=\varepsilon(a)\mathrm{id}_{H}.
\]
A morphism between two post-Hopf algebras $(H,\wp)$ and $(H',\wp')$ is a morphism of Hopf algebras $f:H\to H'$ such that $f(a\wp b)=f(a)\wp'f(b)$, for all $a,b\in H$. 

We shall denote the category of post-Hopf algebras by $\mathrm{PHopf}$.
\end{definition}

\begin{remark}\label{rmk:subadj}
Given a cocommutative post-Hopf algebra $(H,\wp)$, one obtains the so called \textbf{subadjacent Hopf algebra} $H_{\wp}:=(H,\star_{\wp},\eta,\Delta,\varepsilon,S_{\wp})$ where:
\[
a\star_{\wp} b:=a_{1}\cdot(a_{2}\wp b),\qquad S_{\wp}(a):=\beta_{\wp}a_{1}(S(a_{2})),
\]
see \cite[Theorem 2.5]{LST}. Furthermore, $(H,\mu,\eta,\Delta,\varepsilon,S)$ is an object in $\mathrm{Hopf}(_{H_{\wp}}\mathcal{M})$, i.e., it is a Hopf monoid in the braided monoidal category of left $H_{\wp}$-modules, where the chosen braiding on $_{H_{\wp}}\mathcal{M}$ is the flip map $\tau$ (we recall that $H_{\wp}$ is cocommutative). The cocommutativity assumption is also crucial in the realm of the \textit{quantum Yang--Baxter equation} (QYBE). Indeed, as proven in \cite[Theorem 4.8]{LST}, given a cocommutative post-Hopf algebra $(H,\wp)$, the following is a solution to the braid equation (equivalently, $\tau\sigma$ satisfies the QYBE) on the vector space $H$:
\begin{equation}
\sigma(a\ot b):=(a_{1}\wp b_{1})\ot(S_{\wp}(a_{2}\wp b_{2})\star_{\wp}a_{3}\star_{\wp}b_{3}),\qquad \text{for all}\ a,b\in H.
\end{equation}
This happens since cocommutative post-Hopf algebras are equivalent to cocommutative Hopf braces introduced in \cite{AGV}, see \cite[Theorem 2.13]{LSTarXiv} (which is the arXiv version of \cite{LST}). Moreover, cocommutative Hopf braces are also equivalent to matched pairs of actions on cocommutative Hopf algebras \cite[Theorem 3.3]{AGV}, hence producing solutions of the QYBE \cite[Corollary 2.4]{AGV}. Removing the cocommutativity assumption, matched pairs of actions on Hopf algebras are equivalent to Yetter--Drinfeld braces \cite{FeSc} and to Yetter--Drinfeld post-Hopf algebras \cite{Sciandra}, and all these structures produce solutions of the QYBE in the non-cocommutative setting. We point out that Yetter--Drinfeld post-Hopf algebras $(H,\wp)$ are not standard Hopf algebras, but are objects in $\mathrm{Hopf}(^{H_{\wp}}_{H_{\wp}}\mathcal{YD})$, i.e., they are Hopf monoids in the braided monoidal category of (left-left) Yetter--Drinfeld modules over the Hopf algebra $H_{\wp}$ defined as above.
\end{remark}

\begin{example}\label{ex:SweedlerpostHopf}
We recall the following two examples of post-Hopf algebras that will be useful in the following.
\begin{itemize}
    \item[1)] Any Hopf algebra $H$ is a post-Hopf algebra $(H,\wp)$, where $\wp$ is given by the (left) trivial action $\varepsilon\ot\mathrm{id}$. 
    \item[2)] Let $\Bbbk$ be a field of characteristic different from 2. Consider Sweedler's Hopf algebra $H_4=\mathrm{span}_\Bbbk\{1,g,x,gx\}$ with $g^2=1$, $x^2=0$, $xg=-gx$, $\Delta(g)=g\otimes g$ and $\Delta(x)=x\otimes 1+g\otimes x$. The post-Hopf algebra structures for $H_{4}$ were classified in \cite[Example 2.12]{LST}. We recall that, up to isomorphism, there are only two post-Hopf algebra structures given by
    \[
\begin{tabular}{c|cccc}
    $\wp_{0}$& $1$ & $g$ & $x$ & $gx$\\ \hline
    $1$ & $1$ & $g$ & $x$ & $gx$ \\ 
    $g$ & $1$ & $g$ & $-x$ & $-gx$ \\ 
    $x$ & $0$ & $0$ & $0$ & $0$ \\ 
    $gx$ & $0$ & $0$ & $0$ & $0$ \\ 
\end{tabular}\qquad
\begin{tabular}{c|cccc}
    $\wp_{1}$& $1$ & $g$ & $x$ & $gx$\\ \hline
    $1$ & $1$ & $g$ & $x$ & $gx$ \\ 
    $g$ & $1$ & $g$ & $-x$ & $-gx$ \\ 
    $x$ & $0$ & $0$ & $x$ & $gx$ \\ 
    $gx$ & $0$ & $0$ & $x$ & $gx$ \\ 
\end{tabular}
\]
apart from the trivial post-Hopf algebra structure $(H_{4},\varepsilon\ot\mathrm{id})$.
\end{itemize}
\end{example}
One can easily verify that the category $\mathrm{PHopf}$ inherits the symmetric monoidal structure from the category $\mathrm{Vec}_{\Bbbk}$ of $\Bbbk$-vector spaces.
\begin{proposition}\label{prop:PHopfbraided}
The category $\mathrm{PHopf}$ is symmetric monoidal.
\end{proposition}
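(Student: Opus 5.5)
The plan is to define the monoidal product on objects by $(H,\wp_H)\otimes(K,\wp_K):=(H\otimes K,\wp_{H\otimes K})$, where $H\otimes K$ carries the usual tensor product Hopf algebra structure: componentwise product and unit, $\Delta(a\otimes b)=(a_1\otimes b_1)\otimes(a_2\otimes b_2)$, $\varepsilon\otimes\varepsilon$ as counit and $S\otimes S$ as antipode. The post structure is
\[
(a\otimes b)\wp_{H\otimes K}(a'\otimes b'):=(a\wp_H a')\otimes(b\wp_K b').
\]
The unit object will be $(\Bbbk,\wp_\Bbbk)$ with $\lambda\wp_\Bbbk\mu=\lambda\mu$; this is the trivial structure $\varepsilon\otimes\mathrm{id}$ on the Hopf algebra $\Bbbk$. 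On morphisms I take $f\otimes g$. The associator, the unitors and the symmetry $\tau$ are those of $\mathrm{Vec}_\Bbbk$.

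The first step is to check that $\wp_{H\otimes K}$ satisfies Definition \ref{def:PH}.
\begin{itemize}
\item It is a coalgebra map, being the composite of the middle flip $H\otimes K\otimes H\otimes K\to H\otimes H\otimes K\otimes K$ (a coalgebra map) with $\wp_H\otimes\wp_K$.
\item Axiom \eqref{wpproduct} holds because both the product and $\wp$ act componentwise. The Sweedler components $a_1,a_2$ and $b_1,b_2$ separate, so the axiom reduces to \eqref{wpproduct} in $H$ and in $K$.
\item Axiom \eqref{wpdefassoci} reduces in the same way to the corresponding identities in each factor.
\item For the invertibility condition, set $\beta_{H\otimes K}(a\otimes b):=\beta_H(a)\otimes\beta_K(b)\in\mathrm{End}(H)\otimes\mathrm{End}(K)\subseteq\mathrm{End}(H\otimes K)$. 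Convolution in each factor then gives $\varepsilon(a)\varepsilon(b)\,\mathrm{id}$, using that $\alpha_{H\otimes K}(a\otimes b)=\alpha_H(a)\otimes\alpha_K(b)$.
\end{itemize}

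The second step is to check that $f\otimes g$ is a Hopf algebra morphism that intertwines the post structures. This is immediate componentwise, and functoriality of $\otimes$ is inherited from $\mathrm{Vec}_\Bbbk$.

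The last step is to verify that the structural isomorphisms of $\mathrm{Vec}_\Bbbk$ are morphisms in $\mathrm{PHopf}$.
\begin{itemize}
\item The associator and the unitors are Hopf algebra isomorphisms for the tensor Hopf structures. They visibly preserve the componentwise $\wp$; for the unitors this uses $\lambda\wp_\Bbbk\mu=\lambda\mu$.
\item The flip $\tau\colon H\otimes K\to K\otimes H$ is a Hopf algebra isomorphism, since the factors commute in the tensor product algebra. It satisfies $\tau\big((a\otimes b)\wp(a'\otimes b')\big)=\tau(a\otimes b)\wp\tau(a'\otimes b')$.
\end{itemize}
The pentagon, triangle and hexagon axioms, together with $\tau^2=\mathrm{id}$, then hold because they hold in $\mathrm{Vec}_\Bbbk$ and the forgetful functor is faithful.

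I expect the only point needing care to be the convolution invertibility of $\alpha_{H\otimes K}$. Here one must check that the convolution product in $\mathrm{Hom}(H\otimes K,\mathrm{End}(H\otimes K))$ restricts to the tensor product of the convolution products on elementary tensors. This follows directly from the form of $\Delta_{H\otimes K}$, which makes this step routine as well.
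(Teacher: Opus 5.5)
Your proposal is correct and follows essentially the same route as the paper. Both use the componentwise post structure $\wp_{\otimes}=(\wp\otimes\wp')(\mathrm{id}\otimes\tau\otimes\mathrm{id})$, its coalgebra-map property via the middle flip, the convolution inverse $\beta_{\wp}\otimes\beta_{\wp'}$, the unit object $(\Bbbk,\mu_\Bbbk)$ with the constraints of $\mathrm{Vec}_\Bbbk$, and the check that $\tau$ intertwines the post structures. You are in fact slightly more explicit than the paper about why $f\otimes g$, the associator and the unitors are morphisms in $\mathrm{PHopf}$, and why the coherence axioms transfer from $\mathrm{Vec}_\Bbbk$.
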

\begin{proof}
Let $(H,\wp)$ and $(H',\wp')$ be two post-Hopf algebras. It is well-known that $H\ot H'$ is a Hopf algebra and that $\wp_{\ot}:=(\wp\ot\wp')(\mathrm{id}_{H}\ot\tau_{H',H}\ot\mathrm{id}_{H'})
    $ is a morphism of coalgebras (since $\wp$, $\wp'$ and $\tau_{H',H}$ are morphisms of coalgebras). Since \eqref{wpproduct} holds for $\wp$ and $\wp'$, it is easily verified that $\wp_{\ot}$ satisfies \eqref{wpproduct}.
Similarly, since \eqref{wpdefassoci} holds for $\wp$ and $\wp'$, it is easily verified that $\wp_{\ot}$ satisfies \eqref{wpdefassoci}.
We know that $\alpha_{\wp}:H\to\mathrm{End}(H)$, $a\mapsto a\wp(-)$ and $\alpha_{\wp'}:H'\to\mathrm{End}(H')$, $a'\mapsto a'\wp'(-)$ have convolution inverses given by $\beta_{\wp}:H\to\mathrm{End}(H)$ and $\beta_{\wp'}:H'\to\mathrm{End}(H')$, respectively.
It is straightforward to verify that $\beta_{\wp_{\ot}}:H\ot H'\to\mathrm{End}(H\ot H')$, $a\ot a'\mapsto\beta_{\wp}a\ot\beta_{\wp'}a'$ is the convolution inverse of $\alpha_{\wp_{\ot}}:H\ot H'\to\mathrm{End}(H\ot H')$, $a\ot a'\mapsto (a\ot a')\wp_{\ot}(-)$.
Therefore, $(H\ot H',\wp_{\ot})$ is a post-Hopf algebra. Moreover, clearly $(\Bbbk,\mu_{\Bbbk})$ is a post-Hopf algebra and $(\mathrm{PHopf},\otimes,\Bbbk)$ is a monoidal category with the same constraints of $(\mathrm{Vec}_{\Bbbk},\ot,\Bbbk)$.
Finally, denoting by $\wp'_{\ot}$ the morphism of the post-Hopf algebra $H'\ot H$, we have
\[
\tau((a\ot a')\wp_{\ot}(b\ot b'))=
(a'\wp' b')\ot(a\wp b)=(a'\ot a)\wp'_{\ot}(b'\ot b)=\tau(a\ot a')\wp'_{\ot}\tau(b\ot b'),
\]
hence $\tau$ is a morphism in $\mathrm{PHopf}$. 
\end{proof}

\subsubsection{Compatibility between post structures and universal enveloping algebras}
In what follows, we shall denote the subspace of the primitive elements of a Hopf algebra $H$ by $P(H)$. \medskip

It is well known, see e.g.\ \cite[Theorem 2.7]{LST}, that, given a post-Hopf algebra $(H,\wp)$, its subspace $P(H)$ of primitive elements is a post-Lie algebra, where the post structure is given by the restriction $\wt:=\wp|_{P(H)\ot P(H)}$. We also recall the following result (which goes back to \cite{ELM}).

\begin{theorem}[{\cite[Theorem 2.8]{LST}}]\label{thm:postHopfonuniversalenveloping}
Let $(\mathfrak{g},[\cdot,\cdot],\wt)$ be a post-Lie algebra. Then $(\mathrm{U}(\mathfrak{g}),\wp)$ is a post-Hopf algebra, where $\wp$ is the extension of $\wt$ determined by
\[
1\wp u=u,\qquad (x^{1}\cdots x^{r})\wp u=x^{1}\wp((x^{2}\cdots x^{r})\wp u)-(x^{1}\wp(x^{2}\cdots x^{r}))\wp u
\]
for all $x_{1},\ldots, x_{r}\in\g$, with $r\geq1$ and $u\in \mathrm{U}(\mathfrak{g})$. Moreover, the subadjacent Hopf algebra $\mathrm{U}(\mathfrak{g})_{\wp}$ is isomorphic to the universal enveloping algebra $\mathrm{U}(\mathfrak{g}_{\wt})$, where $\g_\wt$ denotes the subadjacent Lie algebra $(\mathfrak{g},[\cdot,\cdot]_{\wt})$.
\end{theorem}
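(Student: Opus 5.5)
We define $\wp$ on $\mathrm{U}(\mathfrak{g})$ in two steps: first as a left action of the tensor algebra, then show it descends.

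\emph{Step 1: each $x\in\mathfrak{g}$ acts by a derivation.} By \eqref{eq:postlie:1}, $D_x$ is a derivation of $(\mathfrak{g},[\cdot,\cdot])$. Hence it extends uniquely to a derivation of the associative algebra $\mathrm{U}(\mathfrak{g})$, namely $x\wp(yz)=(x\wp y)z+y(x\wp z)$. This extended map is also a coderivation for the primitive coproduct, so $\Delta(x\wp u)=(x\wp u_1)\otimes u_2+u_1\otimes(x\wp u_2)$. This gives \eqref{wpproduct} for primitive $a$.

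\emph{Step 2: extend to words.} We extend $\wp$ to $T(\mathfrak{g})\otimes\mathrm{U}(\mathfrak{g})$ by the stated recursion. To see that this is an action of a different algebra, set $\theta(x)=x\wp-$ and define a new product on $\mathrm{U}(\mathfrak{g})$ by $x\star u=xu+x\wp u$. The cleaner route is then to show:
\begin{itemize}
\item $\mathfrak{g}\to\mathrm{Der}(\mathrm{U}(\mathfrak{g}))$, $x\mapsto D_x$, is a Lie morphism from $\mathfrak{g}_\wt$; this follows from \eqref{eq:interpretation-post-Lie} on generators, since both sides are derivations.
\item Hence $\mathrm{U}(\mathfrak{g}_\wt)$ acts on $\mathrm{U}(\mathfrak{g})$ by $x\mapsto D_x$.
\end{itemize}

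\emph{Step 3: transport the action and identify the subadjacent algebra.} Consider the linear map $\Phi\colon \mathrm{U}(\mathfrak{g}_\wt)\to \mathrm{U}(\mathfrak{g})$ defined by $\Phi(1)=1$ and $\Phi(x u)=x\Phi(u)+x\wp\Phi(u)$. We show it is a well-defined coalgebra isomorphism: it is filtered and its associated graded map is the identity on $S(\mathfrak{g})$, by PBW. Then define $a\wp u:=\Phi^{-1}(a)\cdot u$, using the Step 2 action. We check that this agrees with the recursion in the statement, using $x^1\cdots x^r=x^1\star(x^2\cdots x^r)-x^1\wp(x^2\cdots x^r)$.

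With this definition:
\begin{itemize}
\item \eqref{wpdefassoci} becomes associativity of the module action, since $a_1(a_2\wp b)=\Phi(\Phi^{-1}(a)\Phi^{-1}(b))$.
\item \eqref{wpproduct} follows because $\mathrm{U}(\mathfrak{g}_\wt)$ acts by algebra automorphisms in the Hopf-module sense, which is inherited from primitives acting by derivations.
\item The coalgebra map property follows from the primitives acting by coderivations together with $\Phi$ being a coalgebra map.
\item $\star_\wp$ is carried by $\Phi$ to the product of $\mathrm{U}(\mathfrak{g}_\wt)$, which gives $\mathrm{U}(\mathfrak{g})_\wp\cong\mathrm{U}(\mathfrak{g}_\wt)$.
\end{itemize}

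\emph{Step 4: convolution invertibility.} Set $\beta_\wp(a)=\Phi^{-1}$-transported action of $S_{\mathrm{U}(\mathfrak{g}_\wt)}$; that is, $\beta(a)=S'(\Phi^{-1}a)\cdot-$. Invertibility then follows from the antipode identity of $\mathrm{U}(\mathfrak{g}_\wt)$.

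The main obstacle is Step 3: proving that $\Phi$ is well defined on $\mathrm{U}(\mathfrak{g}_\wt)$, i.e.\ respects $xy-yx-[x,y]_\wt$, and that it is a coalgebra isomorphism. I would first try the following. Show that $x\mapsto L_x+D_x\in\mathrm{End}(\mathrm{U}(\mathfrak{g}))$, with $L_x$ left multiplication, is a Lie morphism from $\mathfrak{g}_\wt$. This uses $[D_x,L_y]=L_{x\wp y}$, \eqref{eq:interpretation-post-Lie}, and $[L_x,L_y]=L_{[x,y]}$. Then $\Phi$ is the orbit map of $1$ under this representation, and comultiplicativity is checked inductively on the filtration degree.
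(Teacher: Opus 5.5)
The paper gives no proof of this statement. It quotes \cite[Theorem 2.8]{LST}, which rests on \cite{ELM}. Your argument is correct, and it runs in the opposite direction to the cited one.

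In the cited route one starts from the recursion. The extension is defined on $T(\g)$ and shown to descend to $\mathrm{U}(\g)$ using \eqref{eq:postlie:1} and \eqref{eq:postlie:2}; this is the same kind of check the paper later performs for $\bp$ in Proposition \ref{thm:universal}. Then \eqref{wpproduct}, \eqref{wpdefassoci} and the coalgebra property are verified from the recursion. Only afterwards is $\star_\wp$ identified with the product of $\mathrm{U}(\g_\wt)$, via the algebra map extending $\mathrm{id}_\g$ and PBW.

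You instead proceed as follows:
\begin{itemize}
\item You build the $\mathrm{U}(\g_\wt)$-module algebra and module coalgebra structure on $\mathrm{U}(\g)$ from the derivations $\tilde D_x$.
\item You obtain the coalgebra isomorphism $\Phi$ as the orbit map of $1$ under the representation $x\mapsto L_x+\tilde D_x$ of $\g_\wt$. Your bracket computation giving $L_{[x,y]_\wt}+\tilde D_{[x,y]_\wt}$ is right.
\item You define $\wp$ by transport.
\end{itemize}
This makes the post-Hopf axioms, the isomorphism $\mathrm{U}(\g)_\wp\cong\mathrm{U}(\g_\wt)$ and an explicit $\beta_\wp$ (from the antipode of $\mathrm{U}(\g_\wt)$) formal consequences. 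You also never have to check that the recursion respects the relations of $\mathrm{U}(\g)$: it is recovered a posteriori from \eqref{wpdefassoci} with $a=x^1$ primitive. The recursion itself does not determine $x\wp u$ for $u\notin\g$, since its $r=1$ case only says $(x\wp 1)\wp u=0$. So the derivation extension from your Step 1 is what the statement tacitly intends. What the recursion-first route buys in exchange is explicit formulas and the template the paper reuses for $\bp$.

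The one step you should write out rather than assert is $a_1(a_2\wp b)=\Phi(\Phi^{-1}(a)\Phi^{-1}(b))$. Both \eqref{wpdefassoci} and the subadjacent isomorphism rest on it. By comultiplicativity of $\Phi$ it is equivalent to $\Phi(AB)=\Phi(A_1)\,(A_2\cdot\Phi(B))$ for all $A,B\in\mathrm{U}(\g_\wt)$. This holds for $A\in\g$ by the definition of $\Phi$. The set of $A$ satisfying it is closed under products, by the module-algebra property of the action together with multiplicativity of $\Delta$ on $\mathrm{U}(\g_\wt)$. So a short induction settles it.

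The opening of your Step 2, where you extend $\wp$ to $T(\g)\otimes\mathrm{U}(\g)$ by the recursion, is never used. It would also require exactly the descent argument your construction avoids, so it should be dropped.
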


In fact, there is an adjunction
\begin{equation}\label{ad:PLiePHopf}
	\begin{tikzcd}
		\mathrm{PLie} \arrow[rr, bend left, "\mathrm{U}"] & \perp & \mathrm{PHopf} \arrow[ll, bend left, "P"]
	\end{tikzcd}
	\end{equation}
extending the well-known one between the categories $\mathrm{Lie}$ and $\mathrm{Hopf}$.
More explicitly, for any post-Hopf algebra $(H, \wp)$ and post-Lie algebra $(\g,[\cdot,\cdot],\wt)$, the following pair of (mutually inverse) morphisms realizes such an adjunction:
\begin{align}
&\Phi:\mathrm{Hom}_{\mathrm{PHopf}}\big(\mathrm{U}(\mathfrak{g}),H\big) \to \mathrm{Hom}_{\mathrm{PLie}}\big(\mathfrak{g},P(H)\big), \quad 
f  \mapsto f|_{P(\mathrm{U}(\mathfrak{g}))}\circ\eta_{\mathfrak{g}} \label{def:Phi}\\
&\Psi:\mathrm{Hom}_{\mathrm{PLie}}\big(\mathfrak{g},P(H)\big) \to \mathrm{Hom}_{\mathrm{PHopf}}\big(\mathrm{U}(\mathfrak{g}),H\big), \quad  f \mapsto \Psi(f)\label{def:Psi},
\end{align}
where $\eta_{\mathfrak{g}}:\mathfrak{g}\to P(\mathrm{U}(\mathfrak{g}))$ is the canonical inclusion and $\Psi(f)$ is the unique morphism in $\mathrm{PHopf}$ (in fact, the unique morphism of algebras) that extends $\iota\circ f$, where $\iota:P(H)\hookrightarrow H$ is the canonical inclusion. 
We also recall that $\eta_{\mathfrak{g}}$ is the $\mathfrak{g}$-component of the unit of this adjunction while, given $H$ in $\mathrm{PHopf}$, the $H$-component $\epsilon_{H}:\mathrm{U}(P(H))\to H$ of the counit of this adjunction is the unique morphism of algebras which extends $P(H)\hookrightarrow H$. \medskip

Theorem \ref{thm:postHopfonuniversalenveloping} produces several examples of post-Hopf algebras, such as the universal enveloping algebra of the free post-Lie algebra, see e.g. \cite{Foissy,Li}, or, following Section \ref{section-post-lie-sl2}, a two-parameter family of post-Hopf structures on $\mathrm{U}(\s\l(2))$. 

\begin{remark}\label{rmk:MilnorMoorepostHopf}
We also observe that the \emph{Cartier--Milnor--Moore Theorem} \cite{MilnorMoore} was lifted at the level of cocommutative post-Hopf algebras, see \cite[Corollary 28]{Catoire} or also \cite[Theorem 4.21 and Corollaries 4.22 and 4.23]{ZD}. Given a cocommutative connected post-Hopf algebra $(H,\wp)$ over a field of characteristic 0, the $H$-component $\epsilon_H$ of the counit of the adjunction \eqref{ad:PLiePHopf} becomes an isomorphism in $\mathrm{PHopf}$. Moreover, under the same assumption on the base field, one has 
$\eta_{\g}=\mathrm{id}$, hence the adjunction \eqref{ad:PLiePHopf} becomes an equivalence of categories between cocommutative connected post-Hopf algebras and post-Lie algebras.
\end{remark}

\section{Infinitesimal post-Lie algebras and infinitesimal post-Hopf algebras}

\subsection{Infinitesimal post-Lie algebras}\label{sec:IPL}

We now introduce infinitesimal post-Lie algebras. As laid out in the introduction, the guiding idea follows the same lines of deformation quantization \cite{BFFLS} and the definition of infinitesimal $\mathcal{R}$-matrices for quasitriangular Hopf algebras \cite{ABSW}, where only \emph{one} algebraic structure, in the present case the post-Lie structure $\wt$, is deformed. We will see that natural examples arise and we will obtain a corresponding notion of infinitesimal post-Hopf algebra for its universal enveloping algebra in the follow-up section.

\subsubsection{Definition and main properties}
\begin{definition}\label{def:infinitesimalPostLie}
An \textbf{infinitesimal post-Lie algebra} consists of a quadruple $(\g,[\cdot,\cdot], \wt,\bt)$, where $(\g,[\cdot,\cdot],\wt)$ is a post-Lie algebra, and $\bt: \g \times \g \to \g$ is a bilinear map satisfying
\begin{align}
x \bt [y,z] &= [x \bt y,z] + [y,x \bt z] \label{eq:infpostlie:1}\\
[x,y] \bt z &= \R(x,y,z) - \R(y,x,z) - \L(x,y,z) + \L(y,x,z) \label{eq:infpostlie:2}
\end{align}
for all $x,y,z \in \g$, where 
\begin{align*}
\R(x,y,z) &= x \wt (y \bt z) + x \bt (y \wt z)\\
\L(x,y,z) &= (x \wt y) \bt z + (x \bt y) \wt z.
\end{align*}

A morphism between two infinitesimal post-Lie algebras $(\g,[\cdot,\cdot], \wt,\bt)$ and  $(\g',[\cdot,\cdot]', \wt',\bt')$ is a morphism of post-Lie algebras $f: (\g,[\cdot,\cdot],\wt) \to (\g',[\cdot,\cdot]',\wt')$ satisfying $f(x \bt y) = f(x) \bt' f(y)$. 

We shall denote the category of infinitesimal post-Lie algebras by $\mathrm{IPLie}$.
\end{definition}

\begin{remark}\label{def-inf-deformation}
In \cite[Definition 3.12]{LazST}, A. Lazarev, Y. Sheng and R. Tang introduced the notion of \textit{infinitesimal deformation} of a post-Lie algebra $(\g,[\cdot,\cdot],\wt)$ consisting of a pair of bilinear maps
\begin{align*}
\{ \cdot,\cdot\} : \g \times \g \to \g \quad \text{and} \quad \bt: \g \times \g \to \g
\end{align*}
such that the maps
\begin{align*}
\llbracket \cdot,\cdot \rrbracket = [\cdot,\cdot] + \hbar \{\cdot,\cdot\}  \quad \text{and} \quad  \widetilde{\wt} = \wt + \hbar \bt 
\end{align*}
make the triple $( R \ten \g, \llbracket \cdot,\cdot \rrbracket , \widetilde{\wt}) $ a post-Lie algebra in the category of $R$-modules, where $R= \Bbbk[\hbar]/(\hbar^2)$ and $\hbar$ is a formal parameter. Given a post-Lie algebra $(\g,[\cdot,\cdot],\wt)$, then $\omega:= (\{\cdot,\cdot\},\bt)$ is an infinitesimal deformation of $(\g,[\cdot,\cdot],\wt)$ if and only if $\omega$ is a $2$-cocycle in a certain cochain complex, see \cite[Theorem 3.13]{LazST}.

Note that Definition \ref{def:infinitesimalPostLie} can be understood as the special case of the above when setting $\{\cdot,\cdot\}=0$. This means that we are considering infinitesimal deformations with undeformed Lie brackets.
\end{remark}

Clearly, any post-Lie algebra $(\g,[\cdot,\cdot],\wt)$ is infinitesimal post-Lie with $\bt=0$.

For any infinitesimal post-Lie algebra $(\g,[\cdot,\cdot], \wt,\bt)$ and $x \in \g$, consider the linear map
\begin{align}
E_x :\g \to \g, \quad y \mapsto x \bt y.
\end{align}
Therefore, Equation \eqref{eq:infpostlie:1} is equivalent to the fact that $E_x$ belongs to $\Der(\g)$, while Equation \eqref{eq:infpostlie:2} is equivalent to
\begin{align}
\label{eq:interpretation-inf-post-Lie}
E_{[x,y]} = [D_x, E_y] - [D_y,E_x] - E_{x \wt y } + E_{y \wt x} - D_{x \bt y} + D_{y \bt x}.
\end{align}

\begin{remark}
\label{remark-simple-inf-post-lie}
Let $\g$ be a simple Lie algebra. If $(\wt,\bt)$ is an infinitesimal post-Lie structure on $\g$, it follows (using that all the derivations are inner) that there exist linear maps $\varphi,\psi: \g \to \g$ such that 
\[ x \wt y = [\varphi(x),y] \quad \text{and} \quad x \bt y = [\psi(x),y].\]
Moreover,  using the Jacobi identity and the fact that $\g$ has trivial center, we can rewrite condition \eqref{eq:infpostlie:2} in terms of $\varphi$ and $\psi$ as follows:
\begin{align}
\label{eq:phi-psi-condition}
\psi([x,y]) = [\varphi(x),\psi(y)] - [\varphi(y), \psi(x)] - \psi\big([\varphi(x),y] - [\varphi(y),x] \big)  -\varphi\big([\psi(x),y] - [\psi(y),x] \big).
\end{align}
We shall use this observation in the next section in order to classify all infinitesimal post-Lie structures on $\s\l(2)$. 
\end{remark}

\begin{remark}\label{rmk:infinitesimalfortrivialpostLie}
Given a Lie algebra $(\g,[\cdot,\cdot])$, let us analyze the infinitesimal structures associated with the trivial post-Lie structures.
\begin{itemize}
    \item[1)] A bilinear map $\bt:\g\times\g\to\g$ provides an infinitesimal post-Lie structure on $(\g,[\cdot,\cdot],\wt=0)$ if (and only if) it satisfies
\begin{equation}\label{eq:infeqtrivialpostLie}
x \bt [y,z] = [x \bt y,z] + [y,x \bt z],\qquad [x,y]\bt z=0. 
\end{equation}
\item[2)] A bilinear map $\bt:\g\times\g\to\g$ provides an infinitesimal post-Lie structure on $(\g,[\cdot,\cdot],\wt=-[\cdot,\cdot])$ if (and only if) it satisfies \eqref{eq:infeqtrivialpostLie}.
Indeed, we have
\begin{align*}
    \R(x,y,z) - \R(y,x,z)&=x \wt (y \bt z) + x \bt (y \wt z)-y \wt (x \bt z)- y \bt (x \wt z)\\&=-[x,y\bt z]-x\bt[y,z]+[y,x\bt z]+y\bt[x,z]\\&\overset{\eqref{eq:infpostlie:1}}{=}-[x\bt y,z]+[y\bt x,z]
\end{align*}
and
\begin{align*}
    - \L(x,y,z) + \L(y,x,z)&=-(x \wt y) \bt z - (x \bt y) \wt z+(y \wt x) \bt z + (y \bt x) \wt z\\&=[x,y] \bt z +[x \bt y,z]-[y,x] \bt z -[y \bt x,z]\\&=2[x,y] \bt z +[x \bt y,z] -[y \bt x,z].
\end{align*}
\end{itemize}
Therefore, in both cases, if $[\g,\g]=\g$ then $\bt=0$. This happens, for instance, when $\g$ is simple.
\end{remark}

\subsubsection{Classification of infinitesimal post-Lie structures on $\s\l(2)$}
\label{subsection-inf-post-lie-sl2}
In Subsection \ref{section-post-lie-sl2} we discussed a classification (up to isomorphism) of post-Lie structures on $\s\l(2)$. This consists of the trivial cases $\wt=0$ and $\wt=-[\cdot,\cdot]$, together with $\wt=[\varphi, \cdot]$, where $\varphi: \s\l(2) \to \s\l(2)$ is the map depending on two complex parameters $\alpha\not=\beta$ defined in Equation \eqref{eq:phi-sl2}. By Remark \ref{rmk:infinitesimalfortrivialpostLie}, we know that the only infinitesimal post-Lie structure compatible with the trivial post-Lie structures on $\s\l(2)$ is $\bt=0$. In the following, we shall provide a multi-parameter family of infinitesimal post-Lie structures compatible with the non-trivial post-Lie algebra structures of $\s\l(2)$ mentioned above. Note that, in view of Remark \ref{remark-simple-inf-post-lie}, this amounts in finding a linear map $\psi: \s\l(2) \to \s\l(2)$ satisfying Equation \eqref{eq:phi-psi-condition}.
For fixed $\alpha \neq \beta$, we set 
\begin{align}
\label{eq:sl2-psi-on-generators}
\psi(e) = a_1 e + a_2 f + a_3 h,\quad
\psi(f) = b_1 e + b_2 f + b_3 h,\quad 
\psi(h) = c_1 e + c_2 f + c_3 h
\end{align}
where $a_i,b_i,c_i$ are complex parameters.
Inserting the relation $[h,e]=2e$ in Equation \eqref{eq:phi-psi-condition} gives
\begingroup
\allowdisplaybreaks
\begin{align*}
&2 ( a_1 e + a_2 f + a_3 h) = 2 \psi(e) = \psi([h,e]) = [\varphi(h),\psi(e)]- \psi\big([\varphi(h),e] \big)  -\varphi\big([\psi(h),e]\big) + \varphi \big( [\psi(e),h] \big)\\
&=  a_1[\varphi(h),e] + a_2[\varphi(h),f]  + a_3[\varphi(h),h] - \psi \Big(- \beta [e,e] + \frac{\beta}{\alpha-\beta} [h,e] \Big) \\
& \ - \varphi \big( c_1[e,e] + c_2[f,e] + c_3[h,e]\big) + \varphi\big( a_1[e,h] + a_2[f,h] + a_3[h,h]\big) \\
&= -a_1 \beta [e,e] + a_1 \frac{\beta}{\alpha - \beta} [h,e] - a_2 \beta [e,f] + a_2 \frac{\beta}{\alpha - \beta} [h,f] -  a_3 \beta [e,h] + a_3 \frac{\beta}{\alpha - \beta} [h,h] - \psi\Big(  \frac{2\beta}{\alpha - \beta} e\Big)\\
& - c_2 \varphi(-h) - 2c_3 \varphi(e) -\varphi (2a_1 e) + \varphi(2a_2f) \\
&=\frac{2a_1 \beta}{\alpha - \beta} e - a_2 \beta h - \frac{2 a_2 \beta}{\alpha -\beta} f + 2 a_3 \beta e - \frac{2 a_1 \beta}{\alpha- \beta} e - \frac{2 a_2 \beta}{\alpha- \beta} f - \frac{2 a_3 \beta}{\alpha- \beta} h - c_2 \beta e  + \frac{c_2 \beta}{\alpha - \beta} h\\ &+2a_2\Big(\frac{\alpha^2-\beta^2}{4} e -f- \frac{\alpha}{2} h \Big) \\
&= e \Big(\frac{2a_1 \beta}{\alpha - \beta} + 2 a_3 \beta - \frac{2 a_1 \beta}{\alpha- \beta} - c_2 \beta + \frac{a_2(\alpha^2-\beta^2)}{2} \Big) + f \Big( - \frac{4 a_2 \beta}{\alpha -\beta} - 2a_2\Big) \\
& +h \Big(-a_2 \beta - \frac{2 a_3 \beta}{\alpha- \beta}  + \frac{c_2 \beta}{\alpha - \beta} -a_2\alpha \Big).
\end{align*}
\endgroup
Setting $\beta \neq 0$ and comparing term by term we obtain the following relations on the coefficients $a_i,b_i,c_i$:
\begin{align}
\label{eq:appendix-formula-one}
c_{2}=\frac{2\alpha}{\beta(\beta-\alpha)}a_{1},  \qquad a_2 = 0, \qquad a_3  = \frac{1}{\beta-\alpha}a_{1},
\end{align}
while, for $\beta=0$, we get $a_1=a_2=a_3=0$. Next, inserting the relation $[e,f]=h$ in Equation \eqref{eq:phi-psi-condition} (and using $a_2 = 0$) gives
\begingroup
\allowdisplaybreaks
\begin{align*}
&c_1e + c_2 f + c_3 h = \psi(h) = \psi([e,f]) =  - [\varphi(f), \psi(e)] + \psi\big( [\varphi(f),e] \big)  -\varphi\big([\psi(e),f]\big) + \varphi \big( [\psi(f),e] \big) \\
&=-[\varphi(f), a_1 e  + a_3 h] + \psi \Big( \Big[\frac{\alpha^2-\beta^2}{4} e -f- \frac{\alpha}{2} h, e \Big]\Big) - \varphi ([a_1e + a_3 h,f]) + \varphi([b_1e + b_2 f + b_3h,e])\\
&= - a_1[\varphi(f),e] -a_3[\varphi(f),h] - \psi([f,e]) - \frac{\alpha}{2}\psi([h,e]) - a_1 \varphi([e,f]) - a_3 \varphi([h,f]) + b_2\varphi([f,e]) + b_3 \varphi([h,e])\\
&= -a_1 \Big[ \frac{\alpha^2-\beta^2}{4} e -f- \frac{\alpha}{2} h,e \Big] -a_3 \Big[ \frac{\alpha^2-\beta^2}{4} e -f- \frac{\alpha}{2} h,h \Big] + \psi(h) - \alpha \psi(e) - a_1 \varphi(h) +2 a_3 \varphi(f )+ \\
&  -b_2 \varphi(h) + 2b_3 \varphi(e)\\
&=a_1[f,e] + \frac{a_1 \alpha}{2}[h,e] -\frac{a_3(\alpha^2-\beta^2)}{4} [e,h] + a_3[f,h] + c_1 e + c_2 f + c_3 h- \alpha a_1 e - \alpha a_3 h \\
&- a_1 \Big(-\beta e + \frac{\beta}{\alpha - \beta}h \Big) + 2a_3 \Big(  \frac{\alpha^2-\beta^2}{4} e -f- \frac{\alpha}{2} h\Big) - b_2 \Big( -\beta e + \frac{\beta}{\alpha - \beta}h\Big)\\
&= -a_1 h + a_1 \alpha e +\frac{a_3(\alpha^2-\beta^2)}{2} e + 2a_3f + c_1e + c_2f + c_3h - \alpha a_1 e - \alpha a_3 h + \beta a_1 e - \frac{a_1 \beta}{\alpha - \beta} h\\
& + \frac{a_3(\alpha^2-\beta^2)}{2} e - 2a_3 f - a_3 \alpha h + b_2 \beta e - \frac{b_2 \beta}{\alpha - \beta} h\\
&=e \Big(a_3 (\alpha^2 - \beta^2)+ c_1 + \beta a_1 + b_2 \beta \Big) + f( c_2) + h \Big( -a_1 + c_3 - 2\alpha a_3 - \frac{a_1 \beta}{\alpha - \beta} - \frac{b_2 \beta}{\alpha - \beta}\Big),
\end{align*}
\endgroup
which, comparing term by term, leads to the following relations on the coefficients $a_i,b_i,c_i$:
\begin{align}
\label{eq:appendix-formula-two}
 a_3 (\alpha^2 - \beta^2) + \beta (a_1 + b_2) =0 \qquad \text{and} \qquad 
 a_1 + 2 \alpha a_3+ \frac{\beta}{\alpha - \beta} (a_1 + b_2) =0.
\end{align}
In the case $\beta\neq0$, we can combine Equations \eqref{eq:appendix-formula-one} and \eqref{eq:appendix-formula-two} to get 
\begin{align}
\label{eq:appendix-formula-three}
a_2 = 0, \qquad
a_3  = \frac{1}{\beta-\alpha}a_{1}, \qquad 
b_{2}=\frac{\alpha}{\beta}{a_{1}}, \qquad
c_{2}=\frac{2\alpha}{\beta(\beta-\alpha)}a_{1},
\end{align}
while for $\beta=0$ no new relations occur. Finally, inserting the relation $[f,h]=2f$ in Equation \eqref{eq:phi-psi-condition} (and setting $\Gamma = \frac{\alpha^2 - \beta^2}{4}$) gives
\begingroup
\allowdisplaybreaks
\begin{align*}
& 2(b_1 e + b_2f + b_3 h) = 2 \psi(f) = \psi([f,h])= [\varphi(f),\psi(h)] - [\varphi(h), \psi(f)] - \psi\big([\varphi(f),h]\big) + \psi\big([\varphi(h),f] \big)  \\
&-\varphi\big([\psi(f),h]\big) +\varphi\big( [\psi(h),f] \big) \\
&= [\varphi(f), c_1 e + c_2 f + c_3 h] - [\varphi(h), b_1e + b_2 f + b_3 h] - \psi \Big( \Big[\Gamma e - f - \frac{\alpha}{2} h,h\Big]\Big) + \psi \Big( \Big[ -\beta e + \frac{\beta}{\alpha- \beta}h,f\Big]\Big)\\
& - \varphi ([b_1 e + b_2 f + b_3 h, h]) + \varphi([c_1 e + c_2 f + c_3 h,f])\\
&= c_1[\varphi(f),e] + c_2[\varphi(f),f] + c_3[\varphi(f),h] - b_1[\varphi(h),e] - b_2[\varphi(h),f] - b_3[\varphi(h),h] - \Gamma \psi([e,h]) + \psi([f,h]) \\
& - \beta \psi([e,f]) + \frac{\beta}{\alpha - \beta} \psi ([h,f]) - b_1 \varphi([e,h]) - b_2 \varphi([f,h]) + c_1 \varphi([e,f]) + c_3 \varphi([h,f])\\
&= c_1 \Big[ \Gamma e - f - \frac{\alpha}{2}h,e\Big] + c_2 \Big[ \Gamma e - f - \frac{\alpha}{2}h,f\Big] + c_3 \Big[ \Gamma e - f - \frac{\alpha}{2}h,h\Big] - b_1 \Big[ -\beta e + \frac{\beta}{\alpha - \beta} h,e\Big] \\
& - b_2 \Big[ -\beta e + \frac{\beta}{\alpha - \beta} h,f\Big] - b_3 \Big[ -\beta e + \frac{\beta}{\alpha - \beta} h,h\Big] +2 \Gamma \psi(e) + 2\psi(f) - \beta \psi(h) - \frac{2\beta}{\alpha - \beta} \psi(f)\\
&+ 2b_1 \varphi(e) -2b_2\varphi(f) + c_1 \varphi(h) - 2 c_3 \varphi(f) \\
&= c_1 h - c_1\alpha e + c_2 \Gamma h + c_2 \alpha f - 2c_3 \Gamma e - 2 c_3 f - \frac{2b_1 \beta}{\alpha - \beta} e + b_2 \beta h + \frac{2 b_2 \beta}{\alpha - \beta} f - 2 b_3\beta e + 2 \Gamma a_1 e + 2 \Gamma a_3 h \\
& + 2 b_1 e + 2 b_2 f + 2 b_3 h - c_1 \beta e - c_2 \beta f - c_3 \beta h - \frac{2 b_1 \beta}{\alpha - \beta} e - \frac{2 b_2 \beta}{\alpha - \beta} f - \frac{2 b_3 \beta}{\alpha - \beta} h - 2b_2 \Gamma e + 2b_2 f +  b_2 \alpha h \\
& -\beta c_1 e + \frac{c_1 \beta}{\alpha - \beta} h -2c_3 \Gamma e + 2 c_3 f + c_3 \alpha h \\
&= e \Big(- c_1\alpha- 4c_3 \Gamma - \frac{4b_1 \beta}{\alpha - \beta} - 2 b_3\beta+ 2 \Gamma a_1 + 2 b_1- 2c_1 \beta  - 2b_2 \Gamma \Big) + f \Big( c_2 \alpha + 4 b_2- c_2 \beta  \Big) \\
& + h\Big( c_1+ c_2 \Gamma+ b_2 \beta+ 2 \Gamma a_3 + 2 b_3- c_3 \beta- \frac{2 b_3 \beta}{\alpha - \beta}+  b_2 \alpha+ \frac{c_1 \beta}{\alpha - \beta}+ c_3 \alpha \Big).
\end{align*}
\endgroup
Comparing term by term, one obtains the relations
\begin{align*}
0 &= - c_1\alpha- 4c_3 \Gamma - \frac{4 \beta}{\alpha - \beta}b_1 - 2 b_3\beta+ 2 \Gamma a_1 - 2c_1 \beta- 2b_2 \Gamma \\
b_2 &=  \frac{\beta - \alpha}{2}c_2 \\
0 &= c_1+ c_2 \Gamma+ b_2 \beta +2 \Gamma a_3 - c_3 \beta- \frac{2 \beta}{\alpha - \beta}b_3 +  b_2 \alpha+ \frac{\beta}{\alpha - \beta}c_1+ c_3 \alpha.
\end{align*}
If $\beta\neq0$, we can combine the former three equations with \eqref{eq:appendix-formula-three} to obtain the following two:
\begin{align*}
0 &= - c_1\alpha- 4c_3 \Gamma - \frac{4\beta}{\alpha - \beta}b_1  - 2 b_3\beta+ 2 \Gamma\frac{(\beta-\alpha)}{\beta} a_1 - 2c_1 \beta \\
0 &= c_1 - c_3 \beta- \frac{2 \beta}{\alpha - \beta}b_3 +   \frac{\beta}{\alpha - \beta}c_1+ c_3 \alpha+\frac{2}{\beta}\Gamma a_{1}.
\end{align*}
Hence, for $\beta \neq 0$ we obtain the following family of solutions depending on three complex parameters $\gamma,\delta,\lambda$:
\[
\begin{aligned}
a_1 &= \gamma, \qquad &
b_1 &= \frac{\alpha-\beta}{4\beta}\Bigl(\delta(-2\alpha-2\beta)
+\lambda(-2\alpha^2+2\alpha\beta)
+\frac{\gamma(\alpha^2-\beta^2)(\beta-\alpha)}{\beta}\Bigr),\\[0.5ex]
a_2 &= 0, \qquad &
b_2 &= \frac{\alpha\gamma}{\beta},\\[0.5ex]
a_3 &= \frac{\gamma}{\beta-\alpha}, \qquad &
b_3 &= \frac{\alpha\delta}{2\beta}
+\frac{\lambda(\alpha-\beta)^2}{2\beta}
+\frac{\gamma(\alpha-\beta)(\alpha^2-\beta^2)}{4\beta^2},\\[0.5ex]
c_1 &= \delta, \qquad &
c_2 &= \frac{2\alpha\gamma}{\beta(\beta-\alpha)}, \qquad
c_3 = \lambda,
\end{aligned}
\]
while, for $\beta=0$, we obtain the following family of solutions depending on four complex parameters $\theta,\chi,\xi,\omega$:
\[
\begin{aligned}
a_1 &= 0, \qquad & a_2 &= 0, \qquad & a_3 &= 0,\\
b_1 &= \theta, \qquad & b_2 &= -\frac{\alpha\xi}{2}, \qquad & b_3 &= \chi,\\
c_1 &= -\omega\alpha+\frac{\alpha^2\xi}{4}, \qquad & c_2 &= \xi, \qquad & c_3 &= \omega.
\end{aligned}
\]
The computations above give the following result:
\begin{theorem}
Let $\alpha \neq \beta$ be two complex numbers, and let $\wt$ be the corresponding post-Lie structure on $\s\l(2)$. If $\beta \neq0$, the following is a three-parameter family of compatible infinitesimal post-Lie structures (depending on $\gamma,\delta,\lambda \in \mathbb{C}$):
\[
\begin{aligned}
e \bt e &= \frac{2\gamma}{\beta-\alpha}e, \quad\quad e \bt f = \frac{2\gamma}{\alpha-\beta}f + \gamma h, \quad\quad e \bt h = -2\gamma e, \\
f \bt e &= \Big(\frac{\alpha\delta}{\beta}+\frac{\lambda(\alpha-\beta)^2}{\beta}+\frac{\gamma(\alpha-\beta)(\alpha^2-\beta^2)}{2\beta^2}\Big)e - \frac{\alpha\gamma}{\beta}h, \\
f \bt f &= \Big(\frac{\delta(\beta^2-\alpha^2)}{2\beta}-\frac{\alpha\lambda(\alpha-\beta)^2}{2\beta}-\frac{\gamma(\alpha^2-\beta^2)(\alpha-\beta)^2}{4\beta^2}\Big)h - \Big(\frac{\alpha\delta}{\beta}+\frac{\lambda(\alpha-\beta)^2}{\beta}+\frac{\gamma(\alpha-\beta)(\alpha^2-\beta^2)}{2\beta^2}\Big)f, \\
f \bt h &= \Big(\frac{\delta(-\beta^2+\alpha^2)}{\beta}+\frac{\alpha\lambda(\alpha-\beta)^2}{\beta}+\frac{\gamma(\alpha^2-\beta^2)(\alpha-\beta)^2}{2\beta^2}\Big)e + \frac{2\alpha\gamma}{\beta}f, \\
h \bt e &= 2\lambda e - \frac{2\alpha\gamma}{\beta(\beta-\alpha)}h, \quad\quad h \bt f = -2\lambda f + \delta h, \quad\quad h \bt h = -2\delta e + \frac{4\alpha\gamma}{\beta(\beta-\alpha)}f.
\end{aligned}
\]
If $\beta=0$, the following is a four-parameter family of compatible infinitesimal post-Lie structures (depending on $\theta,\chi,\xi,\omega \in \mathbb{C}$):
\[
\begin{aligned}
e \bt e &= 0, \qquad  &
e \bt f &= 0, \qquad &
e \bt h &= 0,\\[0.5ex]
f \bt e &= 2\chi e + \frac{\alpha\xi}{2}h, \qquad &
f \bt f &= \theta h - 2\chi f, \qquad &
f \bt h &= -2\theta e - \alpha\xi f,\\[0.5ex]
h \bt e &= 2\omega e - \xi h, \qquad &
h \bt f &= \Bigl(-\omega\alpha+\frac{\alpha^2\xi}{4}\Bigr)h - 2\omega f,\qquad &
h \bt h &= \Bigl(2\omega\alpha-\frac{\alpha^2\xi}{2}\Bigr)e + 2\xi f.
\end{aligned}
\]
Moreover, this provides a classification (up to isomorphism) of all infinitesimal post-Lie structures on $\s\l(2)$.
\end{theorem}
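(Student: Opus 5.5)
The plan is to reduce everything to the linear-algebra problem already set up before the statement. By Remark \ref{remark-simple-inf-post-lie}, since $\s\l(2)$ is simple with only inner derivations and trivial center, any infinitesimal post-Lie structure $\bt$ compatible with $\wt=[\varphi,\cdot]$ is of the form $x\bt y=[\psi(x),y]$ for a unique linear $\psi$. Moreover, \eqref{eq:infpostlie:1} then holds automatically, and \eqref{eq:infpostlie:2} is equivalent to \eqref{eq:phi-psi-condition}. Conversely, any $\psi$ satisfying \eqref{eq:phi-psi-condition} gives an infinitesimal post-Lie structure. So the statement amounts to solving \eqref{eq:phi-psi-condition} for $\psi$ and then translating via $x\bt y=[\psi(x),y]$.

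For the trivial post-Lie structures $\wt=0$ and $\wt=-[\cdot,\cdot]$, Remark \ref{rmk:infinitesimalfortrivialpostLie} already forces $\bt=0$, since $[\s\l(2),\s\l(2)]=\s\l(2)$. By \cite[Proposition 4.7]{BDV}, up to isomorphism the only remaining case is $\varphi$ as in \eqref{eq:phi-sl2}. Isomorphisms of infinitesimal post-Lie algebras are in particular post-Lie isomorphisms, so fixing a representative $\wt$ and classifying $\bt$ on it gives the classification. This uses the fact that an isomorphism $f$ transports a $\bt$ on one representative to a $\bt$ on an isomorphic representative.

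Both sides of \eqref{eq:phi-psi-condition} are antisymmetric and bilinear in $(x,y)$. It therefore suffices to impose the condition on the basis pairs $(h,e)$, $(e,f)$, $(f,h)$, which is exactly what the preceding computations do. Writing $\psi$ as in \eqref{eq:sl2-psi-on-generators}, these yield the linear system \eqref{eq:appendix-formula-one}, \eqref{eq:appendix-formula-two}, and the three relations coming from $[f,h]=2f$. I would then solve this homogeneous linear system in the nine unknowns $a_i,b_i,c_i$, splitting into the cases $\beta\neq 0$ and $\beta=0$ as the division by $\beta$ dictates.

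For $\beta\neq0$, I would substitute \eqref{eq:appendix-formula-three} into the two remaining equations. Taking $\gamma=a_1$, $\delta=c_1$, $\lambda=c_3$ as free parameters, these equations determine $b_1$ and $b_3$ uniquely, because their coefficients $-4\beta/(\alpha-\beta)$ and $-2\beta/(\alpha-\beta)$ are nonzero. For $\beta=0$, we have $a_i=0$; the relation $b_2=-\tfrac{\alpha}{2}c_2$ together with the last two equations leaves $b_1,b_3,c_2,c_3$ free and fixes $c_1$ (one checks the first equation then holds identically). This gives the four-parameter family.

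Finally, I would compute $x\bt y=[\psi(x),y]$ on the nine basis pairs using \eqref{eq:sl2-relations}, and check that the results agree with the tables in the statement. For example, $e\bt e=[a_3h,e]=2a_3e=\tfrac{2\gamma}{\beta-\alpha}e$. The main obstacle is purely bookkeeping: solving the third block of equations for $b_1,b_3$ without error and confirming that the system has no further hidden constraints, so that the solution space really has dimension three (respectively four). I would verify the rank by computer algebra, or by re-substituting the solution into \eqref{eq:phi-psi-condition} on all three basis pairs.
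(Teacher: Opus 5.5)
Your route is the paper's own: encode the infinitesimal structure by $\psi$, impose \eqref{eq:phi-psi-condition} on the pairs $(h,e)$, $(e,f)$, $(f,h)$, solve, and translate back. It correctly shows that the displayed families consist of infinitesimal post-Lie structures, and for $\alpha\neq0$ that they exhaust those on the given $\varphi$. The exhaustiveness claim, however, has a gap that the paper's derivation shares and that your verification plan would not catch.

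The relation $a_2=0$ in \eqref{eq:appendix-formula-three} does not follow when $\alpha=0$. The $f$-coefficients of the $[h,e]$-relation read $2a_2=-\frac{4\beta}{\alpha-\beta}a_2-2a_2$, i.e.\ $\frac{4\alpha}{\alpha-\beta}a_2=0$. The $f$-coefficients of the $[e,f]$-relation (simplified in the paper using $a_2=0$) give only $\alpha a_2=0$. So the case split must be $\alpha\beta\neq0$, $\beta=0$, and $\alpha=0\neq\beta$. In the last case the solution space is $4$-dimensional: $a_2,a_3,c_1,c_3$ are free, with $c_2=-\beta a_2$, $b_2=-\frac{\beta^2}{4}a_2$, $a_1=\beta a_3+\frac{\beta^2}{4}a_2$, $2b_3=\frac{\beta^2}{2}a_3+\beta c_3$, and $b_1$ then determined.

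Concretely, take $(\alpha,\beta)=(0,2)$, where $\varphi(e)=0$, $\varphi(f)=-e-f$, $\varphi(h)=-2e-h$. The map $\psi(e)=e+f$, $\psi(f)=-e-f$, $\psi(h)=-2e-2f$ satisfies \eqref{eq:phi-psi-condition} on all three pairs but has $a_2=1$. It is not isomorphic to any member of either family. An automorphism conjugating $\varphi$ to a family member must map $\ker\varphi=\Bbbk e$ onto $\Bbbk e$, hence preserve $\mathrm{span}\{e,h\}$, hence preserve the condition $\psi(e)\in\mathrm{span}\{e,h\}$. Re-substituting the stated solution only proves sufficiency. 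A rank computation over $\mathbb{C}(\alpha,\beta)$ returns the generic nullity $3$ and misses this drop.

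The same example shows that your reduction step (``up to isomorphism only \eqref{eq:phi-sl2} and the two trivial cases remain'') is not safe either. Write $\varphi(x)=-x+\ell(x)e$ with $\ell(e)=1$, $\ell(f)=-1$, $\ell(h)=-2$. The $\psi$ above is the velocity at $t=0$ of $\varphi_t(x)=-x+\ell(x)\big((1+t)e+tf\big)$, and one checks directly that every $\varphi_t$ satisfies \eqref{eq:Rota-Baxter-operator}. For $t\neq0,-1$, $\ker\varphi_t$ is spanned by the semisimple element $(1+t)e+tf$. No automorphism carries such a line to $0$, $\mathfrak{sl}(2)$, $\Bbbk e$ or $\mathrm{span}\{e,h\}$, which are the only kernels occurring in the recalled list. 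The simplest instance is $\varphi(e)=-e+\tfrac12h$, $\varphi(f)=-f-\tfrac12h$, $\varphi(h)=0$.

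So to prove the final sentence you first need a genuinely complete classification of weight-one Rota--Baxter operators on $\mathfrak{sl}(2)$ up to conjugation; the recalled one is not complete. Then, for each class, you need a full case analysis of the linear system according to which pivots vanish. As stated, the classification claim is false at $\alpha=0$.
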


\subsubsection{A geometric interpretation of infinitesimal post-Lie algebras}\label{sec:geometry}

Let $\nabla\colon\Gamma^\infty(TM)\times\Gamma^\infty(TM)\to\Gamma^\infty(TM)$ be a covariant derivative on a smooth manifold $M$. Assume that $\nabla$ is flat and that its torsion is covariantly constant. As described in Section \ref{sec:geompost}, this gives a post-Lie algebra $(\Gamma^\infty(TM),-\mathrm{Tor}^\nabla,\nabla)$ with corresponding subadjacent Lie algebra $(\Gamma^\infty(TM),[\cdot,\cdot])$, i.e., the usual Lie algebra of vector fields on $M$.
\begin{proposition}
Consider a flat covariant derivative $\nabla$ on a smooth manifold $M$, such that $\mathrm{Tor}^\nabla$ is covariantly-constant. Let $\bt\colon\Gamma^\infty(TM)\times\Gamma^\infty(TM)\to\Gamma^\infty(TM)$ be a $\mathscr{C}^\infty(M)$-bilinear map satisfying
\begin{equation}\label{eq:infcon1}
    [X,Y]\bt Z
    =X\bt\nabla_YZ-Y\bt\nabla_XZ+\nabla_X(Y\bt Z)-\nabla_Y(X\bt Z)
\end{equation}
and
\begin{equation}\label{eq:infcon2}
\begin{split}
    X\bt[Y,Z]-[X\bt Y,Z]-[Y,X\bt Z]
    &=[X,Y]\bt Z-(\nabla_XY)\bt Z-\nabla_{X\bt Y}Z\\
    &~~~~-[X,Z]\bt Y+(\nabla_XZ)\bt Y+\nabla_{X\bt Z}Y
\end{split}
\end{equation}
for all $X,Y,Z\in\Gamma^\infty(TM)$, then $(\Gamma^\infty(TM),-\mathrm{Tor}^\nabla,\nabla,\bt)$ is an infinitesimal post-Lie algebra.
\end{proposition}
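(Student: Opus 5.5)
The proof is a direct verification of the two axioms of Definition \ref{def:infinitesimalPostLie} for the quadruple $(\Gamma^\infty(TM),-\mathrm{Tor}^\nabla,\nabla,\bt)$. By Section \ref{sec:geompost}, $(\Gamma^\infty(TM),-\mathrm{Tor}^\nabla,\nabla)$ is already a post-Lie algebra under the standing hypotheses, so only \eqref{eq:infpostlie:1} and \eqref{eq:infpostlie:2} remain. In both I will take the Lie bracket to be $-\mathrm{Tor}^\nabla$ and the post-Lie product to be $X\wt Y=\nabla_XY$. I will then expand every occurrence of the bracket as
\[
-\mathrm{Tor}^\nabla(X,Y)=[X,Y]-\nabla_XY+\nabla_YX,
\]
so that everything is written in terms of the commutator of vector fields, $\nabla$ and $\bt$. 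The $\mathscr{C}^\infty(M)$-bilinearity of $\bt$ is used only so that all the expressions involved are well defined and tensorial; no local coordinates are needed.

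For \eqref{eq:infpostlie:2}, the left-hand side becomes $[X,Y]\bt Z-(\nabla_XY)\bt Z+(\nabla_YX)\bt Z$. On the right-hand side, the difference $\R(X,Y,Z)-\R(Y,X,Z)$ is
\[
\nabla_X(Y\bt Z)+X\bt\nabla_YZ-\nabla_Y(X\bt Z)-Y\bt\nabla_XZ,
\]
which equals $[X,Y]\bt Z$ exactly by \eqref{eq:infcon1}. The $\L$-terms contribute
\[
-(\nabla_XY)\bt Z-\nabla_{X\bt Y}Z+(\nabla_YX)\bt Z+\nabla_{Y\bt X}Z.
\]
So everything except $-\nabla_{X\bt Y}Z+\nabla_{Y\bt X}Z$ matches the left-hand side directly. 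These two remaining terms must be absorbed using the second hypothesis \eqref{eq:infcon2}, whose right-hand side contains precisely terms of the form $\nabla_{X\bt Y}Z$. My first attempt will be to antisymmetrize \eqref{eq:infcon2} in suitable arguments, or to use its $Y\leftrightarrow Z$ antisymmetric structure together with \eqref{eq:infcon1}, to produce the identity needed for these leftover terms.

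For \eqref{eq:infpostlie:1}, I will expand both $X\bt\mathrm{Tor}^\nabla(Y,Z)$ and $\mathrm{Tor}^\nabla(X\bt Y,Z)+\mathrm{Tor}^\nabla(Y,X\bt Z)$. The required identity then reads
\[
X\bt[Y,Z]-[X\bt Y,Z]-[Y,X\bt Z]=X\bt\nabla_YZ-X\bt\nabla_ZY-\nabla_{X\bt Y}Z+\nabla_Z(X\bt Y)-\nabla_Y(X\bt Z)+\nabla_{X\bt Z}Y .
\]
I then rewrite the right-hand side of \eqref{eq:infcon2} by substituting \eqref{eq:infcon1} for $[X,Y]\bt Z$ and for $[X,Z]\bt Y$. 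This yields all the displayed terms, plus a residue of the form
\[
\nabla_X(Y\bt Z)-(\nabla_XY)\bt Z-Y\bt\nabla_XZ\;-\;(Y\leftrightarrow Z),
\]
that is, the antisymmetrization in $(Y,Z)$ of the covariant derivative $(\nabla_X\bt)(Y,Z)$. The remaining task is to show that this residue vanishes, or that it is compensated by the leftover terms found in the verification of \eqref{eq:infpostlie:2}.

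The main obstacle is the bookkeeping of these residual terms. In each axiom one hypothesis accounts for most of the terms directly, and the delicate point is showing that the leftover $\nabla_{X\bt Y}Z$ terms and the $(\nabla_X\bt)$ terms cancel between the two computations. I expect the cleanest route is to first derive, from \eqref{eq:infcon1} and \eqref{eq:infcon2} together, an auxiliary identity relating $\nabla_{X\bt Y}Z-\nabla_{Y\bt X}Z$ to the antisymmetrized $(\nabla_X\bt)(Y,Z)$, and then to feed that identity into both verifications.
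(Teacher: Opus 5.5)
Your reductions are correct, and they show exactly where the argument breaks. After substituting $-\mathrm{Tor}^\nabla(X,Y)=[X,Y]-\nabla_XY+\nabla_YX$, hypothesis \eqref{eq:infcon1} turns \eqref{eq:infpostlie:2} into $\nabla_{X\bt Y-Y\bt X}Z=0$. Likewise, \eqref{eq:infcon2} turns \eqref{eq:infpostlie:1} into $(\nabla_X\bt)(Y,Z)=(\nabla_X\bt)(Z,Y)$. The gap is the step where you plan to derive these residues from \eqref{eq:infcon1}--\eqref{eq:infcon2}: they do not follow. Cancelling them against each other is not available either, since each axiom is a separate identity.

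Since $\nabla_W(fZ)=W(f)Z+f\nabla_WZ$, the first residue vanishes for all $Z$ only if $X\bt Y=Y\bt X$. Concretely, take $M=\mathbb{R}^2$ with the trivial connection $\nabla_XY=\sum_i X(Y^i)\partial_i$, which is flat and torsion-free, and put $X\bt Y:=X^1Y^2\,\partial_1$. This $\bt$ is parallel, so $\nabla_X(Y\bt Z)=(\nabla_XY)\bt Z+Y\bt\nabla_XZ$. Combined with $[X,Y]=\nabla_XY-\nabla_YX$, a two-line check shows that \eqref{eq:infcon1} and \eqref{eq:infcon2} hold identically. Yet the bracket $-\mathrm{Tor}^\nabla$ is zero, so the left side of \eqref{eq:infpostlie:2} vanishes. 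For $X=\partial_1$, $Y=\partial_2$, $Z=x_1\partial_1$ the right side equals $-\nabla_{\partial_1}(x_1\partial_1)=-\partial_1$. So the statement as formulated cannot be proved; an extra hypothesis is needed.

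The missing ingredient is symmetry of $\bt$. If $X\bt Y=Y\bt X$, both of your residues vanish identically; the second does because $\nabla_X\bt$ is then symmetric as well. Your direct computation then becomes a complete proof: \eqref{eq:infcon1} gives \eqref{eq:infpostlie:2}, and \eqref{eq:infcon2} gives \eqref{eq:infpostlie:1}.

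The paper argues differently, through $\tilde\nabla=\nabla+\hbar\bt$. There \eqref{eq:infcon1} is flatness of $\tilde\nabla$ and \eqref{eq:infcon2} is $\tilde\nabla(\mathrm{Tor}^{\tilde\nabla})=0$, so $(\Gamma^\infty(TM)[\hbar]/(\hbar^2),-\mathrm{Tor}^{\tilde\nabla},\tilde\nabla)$ is post-Lie. However, $\mathrm{Tor}^{\tilde\nabla}=\mathrm{Tor}^\nabla+\hbar(\bt-\bt^{\mathrm{op}})$. This is therefore a deformation in the sense of Remark \ref{def-inf-deformation} with $\{\cdot,\cdot\}=-(\bt-\bt^{\mathrm{op}})$ rather than $0$. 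The paper's final appeal to that remark with $\{\cdot,\cdot\}=0$ is legitimate only for symmetric $\bt$.

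Your two residues are precisely the $\{\cdot,\cdot\}$-terms in the order-$\hbar$ post-Lie equations. The obstruction you ran into is therefore a defect of the statement, and of the paper's last step, not of your bookkeeping.
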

\begin{proof}
Given $\nabla$ as above and, for the moment, an arbitrary $\mathbb{R}$-bilinear map $\bt\colon\Gamma^\infty(TM)\times\Gamma^\infty(TM)\to\Gamma^\infty(TM)$, we define an $\mathbb{R}[\hbar]/(\hbar^2)$-bilinear map
$$
\tilde{\nabla}\colon(\Gamma^\infty(TM)[\hbar]\times\Gamma^\infty(TM)[\hbar])/(\hbar^2)\to\Gamma^\infty(TM)[\hbar]/(\hbar^2)
$$
by $\tilde{\nabla}:=\nabla+\hbar\bt$, i.e., $\tilde{\nabla}((X,Y)+\hbar(X',Y'))=\nabla_{X}Y+\hbar(X\bt Y+\nabla_{X'}(Y'))$ for all $X,X',Y,Y'\in\Gamma^\infty(TM)$. We show that $\tilde{\nabla}$ is an infinitesimal deformation (in the spirit of Remark \ref{def-inf-deformation}) of the covariant derivative $\nabla$ if and only if $\bt$ is $\mathscr{C}^\infty(M)$-bilinear and, furthermore, that $\tilde{\nabla}$ is a flat connection on $\Gamma^\infty(TM)[\hbar]/(\hbar^2)$ such that $\tilde{\nabla}(\mathrm{Tor}^{\tilde{\nabla}})=0$ if and only if \eqref{eq:infcon1} and \eqref{eq:infcon2} hold.

First of all, the function linearity of a connection in the first argument forces $\bt$ to be function linear in the first argument, as well. In the second argument, we note that the Leibniz rule holds if and only if
\begin{align*}
    \nabla_X(f\cdot Y)
    +\hbar X\bt(f\cdot Y)
    &=\tilde{\nabla}_X(f\cdot Y)
    =\mathscr{L}_X(f)\cdot Y
    +f\cdot\tilde{\nabla}_XY\\
    &=\mathscr{L}_X(f)\cdot Y
    +f\cdot\nabla_XY
    +\hbar f\cdot(X\bt Y)
    =\nabla_X(f\cdot Y)
    +\hbar f\cdot(X\bt Y)
\end{align*}
for all $X,Y\in\Gamma^\infty(TM)$ and $f\in\mathscr{C}^\infty(M)$, where in the last equality we used the Leibniz rule of $\nabla$. The above equation holds if and only if $\bt$ is function linear in the second argument, as claimed. Under this assumption, the curvature of $\tilde{\nabla}$ reads
\begin{align*}
    \mathrm{R}^{\tilde{\nabla}}(X,Y)Z
    &=\mathrm{R}^{\nabla}(X,Y)Z+\hbar\big(\nabla_X(Y\bt Z)
    +X\bt\nabla_YZ
    -\nabla_Y(X\bt Z)
    -Y\bt\nabla_XZ
    -[X,Y]\bt Z\big)
\end{align*}
for all $X,Y,Z\in\Gamma^\infty(TM)$. Since $\mathrm{R}^\nabla=0$ by assumption, it follows that $\mathrm{R}^{\tilde{\nabla}}=0$ if and only if \eqref{eq:infcon1} holds for all $X,Y,Z\in\Gamma^\infty(TM)$. Next, we calculate
\begin{align*}
    \tilde{\nabla}_X(\mathrm{Tor}^{\tilde{\nabla}})(Y,Z)
    &=\tilde{\nabla}_X(\mathrm{Tor}^{\tilde{\nabla}}(Y,Z))
    -\mathrm{Tor}^{\tilde{\nabla}}(\tilde{\nabla}_XY,Z)
    -\mathrm{Tor}^{\tilde{\nabla}}(Y,\tilde{\nabla}_XZ)\\
    &=\nabla_X(\mathrm{Tor}^\nabla)(Y,Z)
    +\hbar\bigg(X\bt\mathrm{Tor}^\nabla(Y,Z)
    +\nabla_X(Y\bt Z-Z\bt Y)\\
    &\qquad-\nabla_{X\bt Y}Z+\nabla_Z(X\bt Y)+[X\bt Y,Z]-(\nabla_XY)\bt Z+Z\bt\nabla_XY\\
    &\qquad-\nabla_Y(X\bt Z)+\nabla_{X\bt Z}Y+[Y,X\bt Z]-Y\bt\nabla_XZ+(\nabla_XZ)\bt Y\bigg)
\end{align*}
for all $X,Y,Z\in\Gamma^\infty(TM)$. After using $\nabla_X(\mathrm{Tor}^\nabla)=0$ and \eqref{eq:infcon1} the above becomes
\begin{align*}
    \tilde{\nabla}_X(\mathrm{Tor}^{\tilde{\nabla}})(Y,Z)
    &=\hbar\bigg(-X\bt[Y,Z]+[X\bt Y,Z]+[Y,X\bt Z]+[X,Y]\bt Z-(\nabla_XY)\bt Z\\
    &\qquad\quad-\nabla_{X\bt Y}Z-[X,Z]\bt Y+(\nabla_XZ)\bt Y+\nabla_{X\bt Z}Y\bigg)
\end{align*}
Thus, it follows that $\tilde{\nabla}_X(\mathrm{Tor}^{\tilde{\nabla}})(Y,Z)=0$ if and only if \eqref{eq:infcon2} holds for all $X,Y,Z\in\Gamma^\infty(TM)$.

We have shown that, under the assumptions of the statement, $\tilde{\nabla}$ is a flat connection on $\Gamma^\infty(TM)[\hbar]/(\hbar^2)$ such that $\tilde{\nabla}(\mathrm{Tor}^{\tilde{\nabla}})=0$. Following the same lines of Section \ref{sec:geompost} we conclude that $(\Gamma^\infty(TM)[\hbar]/(\hbar^2),-\mathrm{Tor}^{\tilde{\nabla}},\tilde{\nabla})$ is a post-Lie algebra. By Remark \ref{def-inf-deformation} (where we set $\{\cdot,\cdot\}=0$) this is indeed equivalent to the fact that $(\Gamma^\infty(TM),-\mathrm{Tor}^{\nabla},\nabla,\bt)$ is an infinitesimal post-Lie algebra.
\end{proof}

\subsection{Infinitesimal post-Hopf algebras}\label{sec:IPH}

\subsubsection{Definition and main properties}

We now introduce infinitesimal deformations of post-Hopf algebras $(H,\wp)$, where we keep the underlying Hopf algebra structure undeformed and consider first-order deformations of $\wp$. This will turn out to be consistent with the infinitesimal deformations of post-Lie algebras which we discussed in Section \ref{sec:IPL}, i.e., with deformations of post-Lie algebras, where we keep the Lie bracket undeformed.

\begin{definition}\label{def:infinitesimalpostHopf}
An \textbf{infinitesimal post-Hopf algebra} is a triple $(H,\wp, \bp)$, where $(H,\wp)$ is a post-Hopf algebra and $\bp:H\ot H\to H$ is a linear map such that the following equalities hold:
\begin{eqnarray}
\Delta(a\bp b)&=&(a_{1}\wp b_{1})\ot(a_{2}\bp b_{2})+(a_{1}\bp b_{1})\ot(a_{2}\wp b_{2})\label{deltalinear}\\
a\bp(bc)&=&%
    (a_{1}\bp b)(a_{2}\wp c)+(a_{1}\wp b)(a_{2}\bp c)\label{multiplicativity}\\
a\bp(b\wp c)+a\wp(b\bp c)&=& (a_{1}(a_{2}\bp b))\wp c+(a_{1}(a_{2}\wp b))\bp c \label{deformedassociativity}
\end{eqnarray}
for all $a,b,c \in H$.

A morphism between two infinitesimal post-Hopf algebras $(H,\wp,\bp)$ and $(H',\wp',\bp')$ is a morphism of post-Hopf algebras $f:(H,\wp)\to(H',\wp')$ such that $f(a\bp b)=f(a)\bp'f(b)$, for all $a,b\in H$.

We shall denote the category of infinitesimal post-Hopf algebras by $\mathrm{IPHopf}$. 
\end{definition}

Clearly, any post-Hopf algebra $(H,\wp)$ is an infinitesimal post-Hopf algebra with $\bp=0$. We also observe that if we choose $\bp=\wp$, from \eqref{wpproduct} and \eqref{multiplicativity} we get $(a_{1}\wp b)(a_{2}\wp c)=0$ for every $a,b,c\in H$. Hence, choosing $c=1$, we get $a\wp b=0$ for all $a,b\in H$ since $\wp$ is a morphism of coalgebras. Similarly, choosing $\bp=\varepsilon\ot\mathrm{id}$ (or $\mathrm{id}\ot\varepsilon$) from \eqref{deltalinear} we immediately get $a\wp b=0$. In fact, as the following result shows, $\bp$ is very different from an action.

\begin{lemma}\label{lem:propertiesinfpostHopf}
Let $(H,\wp,\bp)$ be an infinitesimal post-Hopf algebra. Then for all $a,b \in H$ we have
\[1\bp a=0, \qquad a\bp 1=0, \qquad \text{and} \qquad  \varepsilon(a\bp b)=0  .\]
\end{lemma}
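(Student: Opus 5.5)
We need three identities: $1\bp a=0$, $a\bp 1=0$ and $\varepsilon(a\bp b)=0$. We first record the standard consequences of $(H,\wp)$ being a post-Hopf algebra. Since $\wp$ is a coalgebra morphism, $\varepsilon(a\wp b)=\varepsilon(a)\varepsilon(b)$. Taking $b=c=1$ in \eqref{wpproduct} gives $a\wp 1=(a_1\wp 1)(a_2\wp 1)$; applying $\beta_\wp$-invertibility (or simply counit arguments as in \cite{LST}) one has $a\wp 1=\varepsilon(a)1$. We also use $1\wp a=a$, which follows from convolution invertibility of $\alpha_\wp$ together with \eqref{wpdefassoci} at $a=b=1$: $1\wp(1\wp c)=1\wp c$, so $\alpha_\wp(1)$ is an invertible idempotent, hence the identity.

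\emph{Counit.} Apply $\varepsilon\ot\varepsilon$ to \eqref{deltalinear}, or more simply apply $\mathrm{id}\ot\varepsilon$. We get
\[
a\bp b=(a_1\wp b_1)\varepsilon(a_2\bp b_2)+(a_1\bp b_1)\varepsilon(a_2)\varepsilon(b_2)=(a_1\wp b_1)\varepsilon(a_2\bp b_2)+a\bp b,
\]
so $(a_1\wp b_1)\varepsilon(a_2\bp b_2)=0$. Applying $\varepsilon$ once more yields $\varepsilon(a_1)\varepsilon(b_1)\varepsilon(a_2\bp b_2)=\varepsilon(a\bp b)=0$. This step is immediate.

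\emph{Right unit.} Put $b=c=1$ in \eqref{multiplicativity}:
\[
a\bp 1=(a_1\bp1)(a_2\wp1)+(a_1\wp1)(a_2\bp1)=(a_1\bp 1)\varepsilon(a_2)+\varepsilon(a_1)(a_2\bp1)=2\,a\bp1,
\]
using $a\wp1=\varepsilon(a)1$. Hence $a\bp1=0$.

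\emph{Left unit.} Put $a=b=1$ in \eqref{deformedassociativity}:
\[
1\bp(1\wp c)+1\wp(1\bp c)=(1\bp1)\wp c+(1\wp 1)\bp c .
\]
With $1\wp x=x$ and $1\bp1=0$ (from the previous step), and since $\Delta(1)=1\ot1$, this reads $1\bp c+1\bp c=0+1\bp c$, hence $1\bp c=0$.

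The only point requiring care is the preliminary facts $a\wp1=\varepsilon(a)1$ and $1\wp a=a$. I expect the first to be the main obstacle if not already known. I would derive it from \eqref{wpproduct} with $b=c=1$, giving that $e(a):=a\wp 1$ satisfies $e(a)=e(a_1)e(a_2)$. Then I would use \eqref{wpdefassoci} with $c=1$ together with the convolution inverse $\beta_\wp$ to cancel and conclude $e=\varepsilon 1$. Alternatively, I would cite the corresponding lemma of \cite{LST}, where these identities are established for any post-Hopf algebra.
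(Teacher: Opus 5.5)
Your proof is correct and is essentially the paper's argument: $\varepsilon(a\bp b)=0$ from applying $\mathrm{id}\ot\varepsilon$ to \eqref{deltalinear}, $a\bp1=0$ from \eqref{multiplicativity} with $b=c=1$, and $1\bp c=0$ from \eqref{deformedassociativity} with $a=b=1$ once $1\bp1=0$ is known (the paper gets $1\bp1=0$ by the same computation at $a=1$), all resting, as in the paper, on the standard facts $1\wp a=a$ and $a\wp1=\varepsilon(a)1$. If you want to prove these facts yourself, note two things. First, \eqref{wpdefassoci} at $a=b=1$ only gives $1\wp(1\wp c)=(1\wp1)\wp c$, so you first need $1\wp1=1$, which holds because $1\wp1$ is a grouplike idempotent by \eqref{wpproduct}. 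Second, for $e(a):=a\wp1$ you need neither \eqref{wpdefassoci} nor $\beta_\wp$: $e$ is a coalgebra map, hence convolution invertible with inverse $S\circ e$, so $e*e=e$ forces $e(a)=\varepsilon(a)1$.
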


\begin{proof}
First, since $1\wp a=a$ for all $a\in H$, we get
\[
1\bp1\overset{\eqref{multiplicativity}}{=}(1\bp 1)(1\wp1)+(1\wp1)(1\bp 1)=1\bp 1+1\bp 1,
\]
so $1\bp 1=0$. Therefore, we get
\[
\begin{split}
1\bp a+1\bp a&=1\bp(1\wp a)+1\wp(1\bp a)\overset{\eqref{deformedassociativity}}{=}(1(1\bp1))\wp a+(1(1\wp1))\bp a\\&=(1\bp1)\wp a+1\bp a=1\bp a
\end{split}
\]
and hence $1\bp a=0$. Moreover, since $a\wp1=\varepsilon(a)1$, we obtain
\[
a\bp 1\overset{\eqref{multiplicativity}}{=}(a_{1}\bp 1)(a_{2}\wp1)+(a_{1}\wp1)(a_{2}\bp1)=a\bp1+a\bp1,
\]
hence $a\bp1=0$. Finally, we compute
\[
\begin{split}
a\bp b&=(\mathrm{id}\ot\varepsilon)\Delta(a\bp b)\overset{\eqref{deltalinear}}{=}(a_{1}\wp b_{1})\varepsilon(a_{2}\bp b_{2})+(a_{1}\bp b_{1})\varepsilon(a_{2}\wp b_{2})=(a_{1}\wp b_{1})\varepsilon(a_{2}\bp b_{2})+a\bp b,
\end{split}
\]
hence $(a_{1}\wp b_{1})\varepsilon(a_{2}\bp b_{2})=0$. As a consequence, $\varepsilon(a\bp b)=0$. 
\end{proof}

The category $\mathrm{IPHopf}$ inherits the symmetric monoidal structure of $\mathrm{PHopf}$, which has been described in Proposition \ref{prop:PHopfbraided}.

\begin{proposition}
The category $\mathrm{IPHopf}$ is symmetric monoidal.
\end{proposition}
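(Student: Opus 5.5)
We build on Proposition \ref{prop:PHopfbraided}. Given infinitesimal post-Hopf algebras $(H,\wp,\bp)$ and $(H',\wp',\bp')$, the pair $(H\ot H',\wp_{\ot})$ is already a post-Hopf algebra. It remains to equip it with a suitable infinitesimal part, and the natural candidate is the ``Leibniz'' combination
\[
(a\ot a')\,\bp_{\ot}\,(b\ot b'):=(a\bp b)\ot(a'\wp' b')+(a\wp b)\ot(a'\bp' b').
\]
This is exactly the $\hbar$-linear term of $(\wp+\hbar\bp)\ot(\wp'+\hbar\bp')$ modulo $\hbar^2$, which is the heuristic behind the choice. We then check \eqref{deltalinear}, \eqref{multiplicativity} and \eqref{deformedassociativity} for $\bp_{\ot}$ directly, using Sweedler notation in both tensor factors.

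For \eqref{deltalinear}, we compute $\Delta$ of each summand. On $(a\bp b)\ot(a'\wp'b')$ we apply \eqref{deltalinear} for $\bp$ and the coalgebra-morphism property of $\wp'$. This produces terms in which either the $H$-part or the $H'$-part carries $\bp$ in one tensor leg. Combining these with the analogous terms from the second summand, we obtain $(\text{stuff})\wp_{\ot}\ot\bp_{\ot}+\bp_{\ot}\ot\wp_{\ot}$. Here the mixed terms with $\bp$ in the second leg of $H$ and in the first leg of $H'$ regroup into $\wp_\ot\ot\bp_\ot$, and symmetrically for the others. No term of order $\hbar^2$ appears, since each summand contains exactly one $\bp$.

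Condition \eqref{multiplicativity} is handled the same way. The product in $H\ot H'$ is componentwise, so we expand $(a\ot a')\bp_{\ot}((b\ot b')(c\ot c'))$ using \eqref{wpproduct} and \eqref{multiplicativity} on each factor. This yields four terms, which regroup as $\bp_\ot\cdot\wp_\ot+\wp_\ot\cdot\bp_\ot$.

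For \eqref{deformedassociativity}, we expand both sides. On the left, the terms $a\bp(b\wp c)\ot a'\wp'(b'\wp'c')$ and $a\wp(b\bp c)\ot a'\wp'(b'\wp' c')$ combine, via \eqref{deformedassociativity} in $H$, with the $H'$-factor rewritten by \eqref{wpdefassoci}. The symmetric terms combine in the same way. The right side uses $\star$-type expressions $(a_1(a_2\bp b))$, which by definition of $\bp_\ot$ split into the required pieces.

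The main obstacle is bookkeeping in this last step. We must make sure that the Sweedler legs of $a$ and $a'$ distribute correctly, since the coproduct on $H\ot H'$ involves the flip. This is analogous to the verification of \eqref{wpdefassoci} in Proposition \ref{prop:PHopfbraided}, so we will follow that computation and track an extra $\hbar$-linear term. The cleanest route may be to observe that all three axioms say precisely that $\wp+\hbar\bp$ satisfies the post-Hopf identities over $\Bbbk[\hbar]/(\hbar^2)$, so they are preserved by the tensor construction of Proposition \ref{prop:PHopfbraided} performed over that ring.

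Finally, we verify the remaining structure on $\mathrm{IPHopf}$:
\begin{itemize}
\item The unit $(\Bbbk,\mu_\Bbbk,0)$ is an infinitesimal post-Hopf algebra.
\item The associativity and unit constraints of $\mathrm{Vec}_\Bbbk$ preserve $\bp$. For the unit constraint this uses Lemma \ref{lem:propertiesinfpostHopf}, for instance $1\bp 1=0$.
\item The flip $\tau$ intertwines $\bp_\ot$ with $\bp'_\ot$, because the defining formula of $\bp_\ot$ is symmetric under exchanging the factors.
\end{itemize}
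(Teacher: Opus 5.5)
Your proposal is correct and follows the paper's proof essentially verbatim: the same $\bp_\ot$ built on $\wp_\ot$ from Proposition~\ref{prop:PHopfbraided}, a direct check of \eqref{deltalinear}, \eqref{multiplicativity} and \eqref{deformedassociativity}, the unit $(\Bbbk,\mu_\Bbbk,0)$ with the constraints of $\mathrm{Vec}_\Bbbk$, and the flip $\tau$ as a morphism. One harmless slip in your prose for \eqref{deltalinear}: the terms that regroup into $\wp_\ot\ot\bp_\ot$ are those with $\bp$ (resp.\ $\bp'$) in the \emph{second} Sweedler leg of the $H$-factor (resp.\ of the $H'$-factor), not in the first leg of $H'$.
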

\begin{proof}
Let $(H,\wp,\bp)$ and $(H',\wp',\bp')$ be two infinitesimal post-Hopf algebras. By Proposition \ref{prop:PHopfbraided}, we already know that $(H\ot H',\wp_{\ot})$ is a post-Hopf algebra where $\wp_{\ot}:=(\wp\ot\wp')(\mathrm{id}_{H}\ot\tau_{H',H}\ot\mathrm{id}_{H'})$.
We define $\bp_{\ot}:(H\ot H')\ot(H\ot H')\to H\ot H'$ as follows:
\[
(a\ot b)\bp_{\ot}(c\ot d):=(a\bp c)\ot(b\wp' d)+(a\wp c)\ot(b\bp' d),\qquad \text{for any } a,c\in H,b,d\in H'.
\]
Since $\wp$ and $\wp'$ are morphisms of coalgebras and the linear maps $\bp$ and $\bp'$ satisfy \eqref{deltalinear}, one can easily show that $\bp_{\ot}$ also satisfies \eqref{deltalinear}. 
Moreover, since $\wp$ and $\wp'$ satisfy \eqref{wpproduct} and $\bp$ and $\bp'$ satisfy \eqref{multiplicativity}, $\bp_{\ot}$ satisfies \eqref{multiplicativity} as well.
Finally, using that $\wp$ and $\wp'$ satisfy \eqref{wpdefassoci} and $\bp$ and $\bp'$ satisfy \eqref{deformedassociativity}, one can prove that $\bp_{\ot}$ satisfies \eqref{deformedassociativity}.
Therefore, $(H\ot H',\wp_{\ot},\bp_{\ot})$ is an infinitesimal post-Hopf algebra. Moreover, $(\Bbbk,\mu_{\Bbbk},0)$ is an infinitesimal post-Hopf algebra and $(\mathrm{IPHopf},\otimes,\Bbbk)$ becomes a monoidal category with the same constraints of $(\mathrm{Vec}_{\Bbbk},\ot,\Bbbk)$. 
Finally, denoting by $\bp'_{\ot}$ the morphism of the infinitesimal post-Hopf algebra $H'\ot H$, we have
\[
\tau((a\ot b)\bp_{\ot}(c\ot d))=(b\wp' d)\ot(a\bp c)+(b\bp' d)\ot(a\wp c)=\tau(a\ot b)\bp'_{\ot}\tau(c\ot d),
\]
hence $\tau$ is a morphism in $\mathrm{IPHopf}$. 
\end{proof}

Infinitesimal post-Hopf algebras behave well under images through Hopf algebra maps, as the following result shows:

\begin{proposition}
Let $f:H\to H'$ be a Hopf algebra map. If $(H,\wp,\bp)$ is an infinitesimal post-Hopf algebra, then $(f(H),\wp_f,\bp_f)$ is an infinitesimal post-Hopf algebra, where $f(a)\wp_{f}f(b):=f(a\wp b)$ and $f(a)\bp_{f}f(b):=f(a\bp b)$, for all $a,b\in H$.
\end{proposition}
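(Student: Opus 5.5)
The plan is to transport every structure along $f$, using that $f$ is a morphism of Hopf algebras. Then $f(H)\subseteq H'$ is a Hopf subalgebra, and every element of $f(H)$ (respectively $f(H)\ot f(H)$) is of the form $f(a)$ (respectively a sum of $f(a)\ot f(b)$). Moreover $\Delta'(f(a))=f(a_1)\ot f(a_2)$, $\varepsilon'(f(a))=\varepsilon(a)$, $f(ab)=f(a)f(b)$ and $f(1)=1$.

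The first step is to make sense of the definitions of $\wp_f$ and $\bp_f$. The formulas $f(a)\wp_f f(b):=f(a\wp b)$ and $f(a)\bp_f f(b):=f(a\bp b)$ only define maps on $f(H)\ot f(H)$ if $f(a\wp b)$ and $f(a\bp b)$ depend only on $f(a)$ and $f(b)$. This holds if and only if $\ker f\wp H+H\wp\ker f\subseteq\ker f$, and likewise for $\bp$. This is the only genuinely delicate point, and I expect it to be the main obstacle, since it is not automatic for an arbitrary Hopf map. I would read the statement as presupposing that these formulas are well defined (equivalently, that $\ker f$ is stable under $\wp$ and $\bp$ in both arguments), state this explicitly, and argue under it. As a sanity check, the case $f=\varepsilon$ is consistent with this reading: there $\varepsilon(a\wp b)=\varepsilon(a)\varepsilon(b)$ because $\wp$ is a coalgebra map, and $\varepsilon(a\bp b)=0$ by Lemma \ref{lem:propertiesinfpostHopf}.

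Next I check that $(f(H),\wp_f)$ is a post-Hopf algebra. That $\wp_f$ is a coalgebra map follows from
\[
\Delta'(f(a\wp b))=f(a_1\wp b_1)\ot f(a_2\wp b_2)=(f(a_1)\wp_f f(b_1))\ot(f(a_2)\wp_f f(b_2)),
\]
and similarly for the counit. Axioms \eqref{wpproduct} and \eqref{wpdefassoci} follow by applying $f$ to the corresponding identities in $H$ and using multiplicativity of $f$. For convolution invertibility, I define $\beta_{\wp_f}(f(a))(f(b)):=f(\beta_\wp(a)(b))$. Well-definedness is again part of the compatibility assumption on $\ker f$; it can alternatively be deduced from the formula for $\beta_\wp$ in terms of $\wp$ and $S$ when one is available. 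The two convolution identities then follow by applying $f$ to those for $\alpha_\wp,\beta_\wp$.

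Finally, the three infinitesimal axioms transfer in the same way. For \eqref{deltalinear}, I apply $f\ot f$ to the identity $\Delta(a\bp b)=(a_1\wp b_1)\ot(a_2\bp b_2)+(a_1\bp b_1)\ot(a_2\wp b_2)$ and use $\Delta'\circ f=(f\ot f)\circ\Delta$. For \eqref{multiplicativity} and \eqref{deformedassociativity}, I apply $f$ to the corresponding identities, rewriting $f(a_1(a_2\bp b))=f(a_1)(f(a_2)\bp_f f(b))$ and so on, which uses that $f$ is an algebra map. Since every element of $f(H)$ is some $f(a)$ and the coproduct of $f(a)$ is $f(a_1)\ot f(a_2)$, these images are exactly the required identities in $f(H)$. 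This shows that $(f(H),\wp_f,\bp_f)$ is an infinitesimal post-Hopf algebra.
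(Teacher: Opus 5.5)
Your proposal is correct and follows the same route as the paper: you transport $\wp$, $\bp$ and $\beta_\wp$ along $f$, defining $\beta_{\wp_f}(f(a))(f(b)):=f(\beta_\wp(a)(b))$ exactly as the paper does, and you obtain every axiom by applying $f$ to the corresponding identity in $H$. The one difference is that you state explicitly that $\ker f$ must be stable under $\wp$, $\bp$ (and $\beta_\wp$) in both arguments, which the paper's proof assumes silently and which is genuinely needed: for instance, $t^a\wp t^b:=t^{(-1)^a b}$ makes $\Bbbk[\mathbb{Z}]=\mathrm{span}_\Bbbk\{t^n : n\in\mathbb{Z}\}$ a post-Hopf algebra (infinitesimal with $\bp=0$), yet for the projection $f:\Bbbk[\mathbb{Z}]\to\Bbbk[\mathbb{Z}/3\mathbb{Z}]$ one has $f(t^0)=f(t^3)$ while $f(t^0\wp t)=f(t)\neq f(t^{-1})=f(t^3\wp t)$.
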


\begin{proof}
    We already know that $f(H)$ is a Hopf algebra (in fact, a Hopf subalgebra of $H'$). Moreover, since $f$ is a Hopf algebra map and $\wp$ is a coalgebra map which satisfies \eqref{wpproduct} and \eqref{wpdefassoci}, one can verify that $\wp_{f}$ is also a coalgebra map satisfying \eqref{wpproduct} and \eqref{wpdefassoci}.
Moreover, given $\beta_{\wp}$ the convolution inverse of $\alpha_{\wp}:a\mapsto a\wp(-)$, we define $\beta_{\wp_{f}}:f(H)\to\mathrm{End}(f(H))$ as $\beta_{\wp_{f}}f(a)(f(b)):=f(\beta_{\wp} a(b))$. It is easy to show that this is the convolution inverse of the morphism $\alpha_{\wp_{f}}:a\mapsto a\wp_{f}(-)$.
Therefore, $(f(H),\wp_{f})$ is a post-Hopf algebra. Moreover, since $f$ is an Hopf algebra map and $\bp$ satisfies \eqref{deltalinear}, \eqref{multiplicativity} and \eqref{deformedassociativity},
also $\bp_{f}$ satisfies \eqref{deltalinear}, \eqref{multiplicativity} and \eqref{deformedassociativity}.
Therefore, $(f(H),\wp_{f},\bp_{f})$ is an infinitesimal post-Hopf algebra.
\end{proof}

As a consequence, we get the following result:

\begin{corollary}
    Let $H$ be a Hopf algebra and $I$ be a Hopf ideal. If $(H,\wp,\bp)$ is an infinitesimal post-Hopf algebra then so is $(H/I,\wp_{\pi},\bp_{\pi})$, where $\pi:H\to H/I$ is the canonical projection.
\end{corollary}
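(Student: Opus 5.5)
The plan is to apply the preceding Proposition to the Hopf algebra map $\pi\colon H\to H/I$. Since $I$ is a Hopf ideal, $H/I$ carries the quotient Hopf algebra structure and $\pi$ is a surjective Hopf algebra map. Surjectivity gives $\pi(H)=H/I$, so the Proposition yields an infinitesimal post-Hopf algebra $(\pi(H),\wp_\pi,\bp_\pi)=(H/I,\wp_\pi,\bp_\pi)$. Here $\pi(a)\wp_\pi\pi(b)=\pi(a\wp b)$ and $\pi(a)\bp_\pi\pi(b)=\pi(a\bp b)$.

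The one point that needs checking is that $\wp_\pi$ and $\bp_\pi$ are well defined. The Proposition defines $\wp_f$ and $\bp_f$ on elements $f(a)$, so implicitly it requires that $f(a\wp b)$ and $f(a\bp b)$ depend only on $f(a)$ and $f(b)$. For $f=\pi$ this means
\[
(I\ot H+H\ot I)\wp H\subseteq I\cup\ker\pi, \qquad \text{i.e.}\qquad I\wp H+H\wp I\subseteq I,
\]
and similarly $I\bp H+H\bp I\subseteq I$. I will state this explicitly as the standing hypothesis under which the corollary is read, namely that $I$ is stable under $\wp$ and $\bp$ in both arguments. I expect this to be the main (indeed only) obstacle. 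It is automatic exactly when the well-definedness hidden in the Proposition holds, so I will phrase the argument as: assuming the operations descend, the Proposition applies verbatim.

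Under that assumption, the identities \eqref{deltalinear}, \eqref{multiplicativity} and \eqref{deformedassociativity}, together with the coalgebra-map property and the identities \eqref{wpproduct} and \eqref{wpdefassoci}, pass to the quotient. They do so simply by applying $\pi$, $\pi\ot\pi$ to the identities in $H$, because $\pi$ is multiplicative and comultiplicative. For the convolution inverse, $\beta_{\wp_\pi}$ is induced by $\beta_\wp$, exactly as in the Proposition. To make this work, $\beta_\wp$ must also preserve $I$. I would check this from the defining equations of $\beta_\wp$ together with the stability of $I$ under $\wp$; alternatively, it follows directly from the Proposition's construction once $\wp_\pi$ is well defined.
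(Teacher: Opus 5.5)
Your argument is the paper's: the corollary is recorded as an immediate consequence of the preceding Proposition applied to the surjective Hopf algebra map $\pi$. You are also right that this tacitly requires $\wp(I\ot H+H\ot I)\subseteq I$ and $\bp(I\ot H+H\ot I)\subseteq I$, a hypothesis the paper never states and which does not follow from $I$ being a Hopf ideal. For a counterexample, take $H=\Bbbk G$ with $G=\{1,s,t,st\}$ the Klein four-group, $a\wp b=\phi_a(b)$ where $\phi_s=\phi_t$ is the automorphism swapping $s$ and $t$ and $\phi_1=\phi_{st}=\mathrm{id}$, $\bp=0$, and $I$ the Hopf ideal of $N=\{1,s\}$; then $s\wp(1-s)=1-t\notin I$.

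The paper's Proposition makes the same tacit step for $\beta_\wp$. Your plan to derive stability of $I$ under $\beta_\wp$ from its defining identities does not go through formally when $H$ is infinite-dimensional, since the inverse of an $I$-preserving operator need not preserve $I$. It does hold when $H$ is finite-dimensional or cocommutative, but in general it is cleanest to add stability of $I$ under $\beta_\wp$ to your standing hypothesis as well.
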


As recalled in Remark \ref{rmk:subadj}, given a cocommutative post-Hopf algebra, one can define the so-called subadjacent Hopf algebra. The latter is equipped with a Hochschild 2-cocycle once we start with an infinitesimal post-Hopf algebra, as shown by the following result.

\begin{theorem}\label{thm:Hochschild}
    Let $(H,\wp,\bp)$ be a cocommutative infinitesimal post-Hopf algebra. Then, the subadjacent Hopf algebra $H_{\wp}:=(H,\star_{\wp},\eta,\Delta,\varepsilon,S_{\wp})$, where $a\star_{\wp} b:=a_{1}(a_{2}\wp b)$ and $S_{\wp}(a):=\beta_{\wp}a_{1}(S(a_{2}))$, is equipped with a Hochschild 2-cocycle which is defined by $a\star_{\bp}b:=a_{1}(a_{2}\bp b)$.
\end{theorem}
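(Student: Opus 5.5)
The plan is to check the Hochschild $2$-cocycle identity for $\omega(a,b):=a\star_{\bp}b=a_1(a_2\bp b)$ on the associative algebra $H_\wp$, with coefficients in $H_\wp$ viewed as a bimodule over itself. That is, we want
\[
a\star_\wp\omega(b,c)-\omega(a\star_\wp b,c)+\omega(a,b\star_\wp c)-\omega(a,b)\star_\wp c=0
\]
for all $a,b,c\in H$. Equivalently, $\star_\wp+\hbar\star_{\bp}$ should be associative modulo $\hbar^2$. I would do this by a direct Sweedler computation: expand each of the four terms using the axioms of Definition \ref{def:infinitesimalpostHopf} and of Definition \ref{def:PH}, then match them. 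Cocommutativity of $H$ is used throughout to reorder Sweedler indices freely. Associativity of $\star_\wp$ itself is taken from Remark \ref{rmk:subadj}.

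The expansions are as follows.
\begin{itemize}
\item By \eqref{wpproduct}, $a\star_\wp\omega(b,c)=a_1(a_2\wp b_1)(a_3\wp(b_2\bp c))$.
\item By \eqref{multiplicativity}, $\omega(a,b\star_\wp c)=a_1(a_2\bp b_1)(a_3\wp(b_2\wp c))+a_1(a_2\wp b_1)(a_3\bp(b_2\wp c))$.
\item Since $\wp$ is a coalgebra map and $H$ is cocommutative, $\Delta(a_1(a_2\wp b))=a_1(a_2\wp b_1)\ot a_3(a_4\wp b_2)$. Hence $\omega(a\star_\wp b,c)=a_1(a_2\wp b_1)\big((a_3(a_4\wp b_2))\bp c\big)$.
\item For the last term the key input is \eqref{deltalinear} combined with cocommutativity:
\[
\Delta(a_1(a_2\bp b))=a_1(a_2\wp b_1)\ot a_3(a_4\bp b_2)+a_1(a_2\bp b_1)\ot a_3(a_4\wp b_2).
\]
This gives $\omega(a,b)\star_\wp c=a_1(a_2\wp b_1)\big((a_3(a_4\bp b_2))\wp c\big)+a_1(a_2\bp b_1)\big((a_3(a_4\wp b_2))\wp c\big)$.
\end{itemize}

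Now compare. The terms beginning with $a_1(a_2\bp b_1)$ cancel by \eqref{wpdefassoci}, since $a_3\wp(b_2\wp c)=(a_3(a_4\wp b_2))\wp c$. The remaining terms all begin with $a_1(a_2\wp b_1)$. Their second factors combine to
\[
a_3\wp(b_2\bp c)+a_3\bp(b_2\wp c)-(a_3(a_4\wp b_2))\bp c-(a_3(a_4\bp b_2))\wp c,
\]
which vanishes by \eqref{deformedassociativity}. This proves the cocycle identity.

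For normalization one may add that Lemma \ref{lem:propertiesinfpostHopf} gives $\omega(1,a)=0=\omega(a,1)$, so the cocycle is normalized.

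The main obstacle is the careful bookkeeping of Sweedler indices, especially in the coproduct of $a_1(a_2\bp b)$. There cocommutativity is essential to bring the factors into the shape required by \eqref{wpdefassoci} and \eqref{deformedassociativity}. I would write out this step explicitly and treat the others briefly.
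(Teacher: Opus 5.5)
Your proposal is correct and is essentially the paper's proof. It uses the same ingredients: \eqref{wpproduct}, \eqref{multiplicativity}, \eqref{wpdefassoci}, \eqref{deformedassociativity}, and the cocommutativity-based formula for $\Delta(a\star_{\bp}b)$ obtained from \eqref{deltalinear}. The only difference is presentation: the paper rewrites $a\star_\wp(b\star_\bp c)+a\star_\bp(b\star_\wp c)$ into $(a\star_\bp b)\star_\wp c+(a\star_\wp b)\star_\bp c$ by a chain of equalities, whereas you group the four expanded terms by their common prefix. Your normalization remark via Lemma~\ref{lem:propertiesinfpostHopf} is a correct small addition.
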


\begin{proof}
Setting $a\star_{\bp}b:=a_{1}(a_{2}\bp b)$, Equation \eqref{deformedassociativity} is as follows:
\begin{equation}\label{defassoc2}
    a\bp(b\wp c)+a\wp(b\bp c)= (a\star_{\bp}b)\wp c+(a\star_{\wp}b)\bp c. 
\end{equation}
We compute
\begin{align}\label{deltastarbullet}
\Delta(a\star_{\bp}b)&=\Delta(a_{1}(a_{2}\bp b))=a_{1}(a_{3}\bp b)_{1}\ot a_{2}(a_{3}\bp b)_{2}\\&\overset{\eqref{deltalinear}}{=}\nonumber a_{1}(a_{3}\bp b_{1})\ot a_{2}(a_{4}\wp b_{2})+a_{1}(a_{3}\wp b_{1})\ot a_{2}(a_{4}\bp b_{2})\\&\nonumber\overset{(!)}{=}a_{1}(a_{2}\bp b_{1})\ot a_{3}(a_{4}\wp b_{2})+a_{1}(a_{2}\wp b_{1})\ot a_{3}(a_{4}\bp b_{2})\\&\nonumber=(a_{1}\star_{\bp} b_{1})\ot (a_{2}\star_{\wp} b_{2})+(a_{1}\star_{\wp} b_{1})\ot(a_{2}\star_{\bp} b_{2}),
\end{align}
where in $(!)$ we used that $H$ is cocommutative. Therefore, we obtain
\[
\begin{split}
  a\star_{\wp}(b\star_{\bp}c)&+a\star_{\bp}(b\star_{\wp}c)\\&
  = a_{1}(a_{2}\wp(b_{1}(b_{2}\bp c)))+a_{1}(a_{2}\bp(b_{1}(b_{2}\wp c)))\\&\overset{\eqref{wpproduct},\eqref{multiplicativity}}{=}\underbrace{a_{1}(a_{2}\wp b_{1})(a_{3}\wp(b_{2}\bp c))}+a_{1}(a_{2}\bp b_{1})(a_{3}\wp(b_{2}\wp c))+\underbrace{a_{1}(a_{2}\wp b_{1})(a_{3}\bp(b_{2}\wp c))}\\&\overset{\eqref{defassoc2}}{=}a_{1}(a_{2}\wp b_{1})((a_{3}\star_{\bp} b_{2})\wp c)+a_{1}(a_{2}\wp b_{1})((a_{3}\star_{\wp} b_{2})\bp c)+a_{1}(a_{2}\bp b_{1})(a_{3}\wp(b_{2}\wp c))\\&\overset{\eqref{wpdefassoci}}{=}\underbrace{(a_{1}\star_{\wp} b_{1})((a_{2}\star_{\bp} b_{2})\wp c)}+(a_{1}\star_{\wp} b_{1})((a_{2}\star_{\wp} b_{2})\bp c)+\underbrace{(a_{1}\star_{\bp}b_{1})((a_{2}\star_{\wp} b_{2})\wp c)}\\&\overset{\eqref{deltastarbullet}}{=}(a\star_{\bp} b)_{1}((a\star_{\bp} b)_{2}\wp c)+(a\star_{\wp} b)_{1}((a\star_{\wp} b)_{2}\bp c)\\&=(a\star_{\bp}b)\star_{\wp}c+(a\star_{\wp}b)\star_{\bp}c,
\end{split}
\]
i.e.
\[
a\star_{\wp}(b\star_{\bp}c)-(a\star_{\wp}b)\star_{\bp}c+a\star_{\bp}(b\star_{\wp}c)-(a\star_{\bp}b)\star_{\wp}c=0.
\]
The latter equality means that $\star_{\bp}$ is a Hochschild 2-cocycle for the Hopf algebra $H_{\wp}$.
\end{proof}

We end this subsection by mentioning the following result, which is immediate from Definition \ref{def:infinitesimalpostHopf}; this will be used in the next subsection to classify the infinitesimal post-Hopf structures on Sweedler's Hopf algebra. 

\begin{lemma}
Let $H$ be a Hopf algebra and consider the post-Hopf algebra $(H,\wp:=\varepsilon\ot\mathrm{id})$. Given a linear map $\bp:H\ot H\to H$, we have that $(H,\wp,\bp)$ is an infinitesimal post-Hopf algebra if and only if
\begin{align}
\Delta(a\bp b)&=(a\bp b_1)\otimes b_2+b_1\otimes(a\bp b_2), \label{eq:lemma-iph-1}\\ 
(ab)\bp c&=\varepsilon(b)a\bp c+\varepsilon(a)b\bp c, \label{eq:lemma-iph-2}\\
a\bp(bc)&=(a\bp b)c+b(a\bp c). \label{eq:lemma-iph-3}
\end{align}
\end{lemma}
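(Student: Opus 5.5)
We specialize the three axioms \eqref{deltalinear}, \eqref{multiplicativity} and \eqref{deformedassociativity} of Definition \ref{def:infinitesimalpostHopf} to $\wp=\varepsilon\ot\mathrm{id}$, i.e.\ $a\wp b=\varepsilon(a)b$. We then show that each specialized axiom is equivalent to one of \eqref{eq:lemma-iph-1}--\eqref{eq:lemma-iph-3}, possibly after using the consequences of Lemma \ref{lem:propertiesinfpostHopf}. The post-Hopf algebra $(H,\varepsilon\ot\mathrm{id})$ is given (Example \ref{ex:SweedlerpostHopf}), so only the conditions on $\bp$ need to be examined.

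For \eqref{deltalinear}, substitution gives
\[
\Delta(a\bp b)=\varepsilon(a_1)b_1\ot(a_2\bp b_2)+(a_1\bp b_1)\ot\varepsilon(a_2)b_2 .
\]
Using the counit property, this is exactly \eqref{eq:lemma-iph-1}. For \eqref{multiplicativity}, the right-hand side becomes $(a_1\bp b)\varepsilon(a_2)c+\varepsilon(a_1)b(a_2\bp c)=(a\bp b)c+b(a\bp c)$, which is \eqref{eq:lemma-iph-3}.

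For \eqref{deformedassociativity}, the left-hand side is $a\bp b\,\varepsilon(\cdot)$-type terms: $a\bp(\varepsilon(b)c)+\varepsilon(a)\,b\bp c$. On the right, $a_1(a_2\bp b)$ is fed into $\varepsilon\ot\mathrm{id}$, giving $\varepsilon(a_1)\varepsilon(a_2\bp b)c=\varepsilon(a\bp b)c$. The second term gives $(a_1\varepsilon(a_2)b)\bp c=(ab)\bp c$. So the axiom reads
\[
\varepsilon(b)\,a\bp c+\varepsilon(a)\,b\bp c=\varepsilon(a\bp b)\,c+(ab)\bp c .
\]

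This last comparison is the only step requiring care. Compared with \eqref{eq:lemma-iph-2}, there is an extra term $\varepsilon(a\bp b)c$. We show it vanishes in both directions of the equivalence. If $(H,\wp,\bp)$ is infinitesimal post-Hopf, Lemma \ref{lem:propertiesinfpostHopf} gives $\varepsilon(a\bp b)=0$, so \eqref{eq:lemma-iph-2} follows. Conversely, assume \eqref{eq:lemma-iph-1}--\eqref{eq:lemma-iph-3}. Applying $\varepsilon\ot\varepsilon$ to \eqref{eq:lemma-iph-1} yields $\varepsilon(a\bp b)=2\varepsilon(a\bp b)$, hence $\varepsilon(a\bp b)=0$. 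The extra term therefore drops out and \eqref{deformedassociativity} holds. This completes the equivalence.
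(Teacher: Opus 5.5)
Your proof is correct and takes the paper's approach: the paper simply calls the lemma immediate from Definition \ref{def:infinitesimalpostHopf} via the substitution $a\wp b=\varepsilon(a)b$. Your version is more careful than that, because you notice that \eqref{deformedassociativity} reduces to \eqref{eq:lemma-iph-2} only up to the extra term $\varepsilon(a\bp b)c$, and you eliminate it in both directions: forward by Lemma \ref{lem:propertiesinfpostHopf}, and backward by applying $\varepsilon\ot\varepsilon$ to \eqref{eq:lemma-iph-1}.
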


Note that Equation \eqref{eq:lemma-iph-2} means that $(-)\bp c:H\to H$ is a derivation of $H$ for any $c\in H$, considering $H$ as an $H$-bimodule with trivial actions, while Equation \eqref{eq:lemma-iph-3} means that $a\bp(-):H\to H$ is a derivation of $H$ for any $a\in H$, considering $H$ as an $H$-bimodule with its multiplication.

\subsubsection{Infinitesimal post-Hopf structures on Sweedler's Hopf algebra}\label{sec:Sweedler}
Let $\Bbbk$ be a field of characteristic different from 2. Consider Sweedler's Hopf algebra $H_4=\mathrm{span}_\Bbbk\{1,g,x,gx\}$ with $g^2=1$, $x^2=0$, $xg=-gx$, $\Delta(g)=g\otimes g$ and $\Delta(x)=x\otimes 1+g\otimes x$ as in 2) of Example \ref{ex:SweedlerpostHopf}. Endow $H_4$ with the trivial post-Hopf structure $a\wp b:=\varepsilon(a)b$. By evaluating \eqref{eq:lemma-iph-2} on $a=b=g$, we get,
\[
0=1\bp c=(gg)\bp c=\varepsilon(g)g\bp c+\varepsilon(g)g\bp c=2g\bp c,
\]
for any $c\in H_{4}$. Hence $g\bp c=0$, for all $c\in H_{4}$. We consider again \eqref{eq:lemma-iph-2} with $a=x$, $b=g$, obtaining
\[
(xg)\bp c=\varepsilon(g)x\bp c+\varepsilon(x)g\bp c=x\bp c, 
\]
while \eqref{eq:lemma-iph-2} with $a=g$, $b=gx$ gives us
\[
x\bp c=(ggx)\bp c=\varepsilon(gx)g\bp c+\varepsilon(g)gx\bp c=gx\bp c.
\]
Therefore, $x\bp c=gx\bp c=0$ for all $c\in H_{4}$. Therefore, $\bp=0$ is the unique infinitesimal post-Hopf structure on the post-Hopf algebra $(H_{4},\varepsilon\ot\mathrm{id})$. \medskip

Next, we classify infinitesimal post-Hopf structures on $H_{4}$, for the remaining post-Hopf structures on $H_{4}$. 
As recalled in 2) of Example \ref{ex:SweedlerpostHopf}, these are given by:
    \[
\begin{tabular}{c|cccc}
    $\wp_{0}$& $1$ & $g$ & $x$ & $gx$\\ \hline
    $1$ & $1$ & $g$ & $x$ & $gx$ \\ 
    $g$ & $1$ & $g$ & $-x$ & $-gx$ \\ 
    $x$ & $0$ & $0$ & $0$ & $0$ \\ 
    $gx$ & $0$ & $0$ & $0$ & $0$ \\ 
\end{tabular}\qquad
\begin{tabular}{c|cccc}
    $\wp_{1}$& $1$ & $g$ & $x$ & $gx$\\ \hline
    $1$ & $1$ & $g$ & $x$ & $gx$ \\ 
    $g$ & $1$ & $g$ & $-x$ & $-gx$ \\ 
    $x$ & $0$ & $0$ & $x$ & $gx$ \\ 
    $gx$ & $0$ & $0$ & $x$ & $gx$ \\ 
\end{tabular}
\]
The same computations work for both $\wp_{0}$ and $\wp_{1}$; we will simply write $\wp$. 

By Lemma \ref{lem:propertiesinfpostHopf}, we already know that $1\bp a=0=a\bp1$ for all $a\in H_{4}$. By applying \eqref{deltalinear} we get 
\[
\Delta(g\bp g)=(g\wp g)\ot(g\bp g)+(g\bp g)\ot(g\wp g)=g\ot(g\bp g)+(g\bp g)\ot g,
\]
hence $g\bp g=0$, since there are no non-zero $(g,g)$-primitive elements in $H_{4}$. As a consequence, 
\[
\Delta(x\bp g)=(x\wp g)\ot(1\bp g)+(g\wp g)\ot(x\bp g)+(x\bp g)\ot g+(g\bp g)\ot(x\wp g)=g\ot(x\bp g)+(x\bp g)\ot g,
\]
so that $x\bp g=0$ for the same reason. Moreover, we have
\begin{align*}
\Delta(g\bp x)&=(g\wp x)\ot(g\bp 1)+(g\wp g)\ot(g\bp x)+(g\bp x)\ot(g\wp 1)+(g\bp g)\ot(g\wp x)\\&=g\ot(g\bp x)+(g\bp x)\ot1, 
\\
\Delta(g\bp gx)&=(g\wp gx)\ot(g\bp g)+(g\wp 1)\ot(g\bp gx)+(g\bp gx)\ot(g\wp g)+(g\bp 1)\ot(g\wp gx)\\&=1\ot(g\bp gx)+(g\bp gx)\ot g,
\end{align*}
hence $g\bp x=a(1-g)+bx$, for $a,b\in\Bbbk$ and $g\bp gx=c(1-g)+dgx$, for $c,d\in\Bbbk$.

Moreover, we have
\begin{align*}
    \Delta(x\bp x)&=(x\wp x)\ot(1\bp 1)+(g\wp x)\ot(x\bp 1)+(x\wp g)\ot(1\bp x)+(g\wp g)\ot(x\bp x)\\&+(x\bp x)\ot 1+(x\bp g)\ot x+(g\bp x)\ot(x\wp 1)+(g\bp g)\ot(x\wp x)\\&=g\ot(x\bp x)+(x\bp x)\ot 1\\\Delta(x\bp gx)&=(g\wp gx)\ot(x\bp g)+(g\wp 1)\ot(x\bp gx)+(x\wp gx)\ot(1\bp g)+(x\wp 1)\ot(1\bp gx)\\&+(x\bp gx)\ot g+(x\bp 1)\ot gx+(g\bp gx)\ot(x\wp g)+(g\bp 1)\ot(x\wp gx)\\&=1\ot(x\bp gx)+(x\bp gx)\ot g,
\end{align*}
so that $x\bp x=a'(1-g)+b'x$ and $x\bp gx=c'(1-g)+d'gx$, for $a',b',c',d'\in\Bbbk$. Similarly, $gx\bullet g=0$, $gx\bullet x=a''(1-g)+b''x$ and $gx\bp gx=c''(1-g)+d''gx$.

By applying \eqref{multiplicativity}, we have
\begin{align*}
    c(1-g)+dgx&=g\bp(gx)=
    (g\bp g)(g\wp x)+(g\wp g)(g\bp x)=g(g\bp x)=-a(1-g)+bgx\\ 
    c'(1-g)+d'gx&=x\bp(gx)=
    (x\bp g)(1\wp x)+(g\bp g)(x\wp x)+(x\wp g)(1\bp x)+(g\wp g)(x\bp x)\\&=g(x\bp x)=-a'(1-g)+b'gx.
\end{align*}
Similarly, $c''(1-g)+d''gx=-a''(1-g)+b''x$.
Therefore, we get
\[
\begin{tabular}{c|cccc}
    $\bp$& $1$ & $g$ & $x$ & $gx$\\ \hline
    $1$ & $0$ & $0$ & $0$ & $0$ \\ 
    $g$ & $0$ & $0$ & $a(1-g)+bx$ & $-a(1-g)+bgx$ \\ 
    $x$ & $0$ & $0$ & $a'(1-g)+b'x$ & $-a'(1-g)+b'gx$ \\ 
    $gx$ & $0$ & $0$ & $a''(1-g)+b''x$ & $-a''(1-g)+b''gx$ \\ 
\end{tabular}
\]
which implies
\[
\begin{split}
-2bx&=-a(1-g)-bx+a(1-g)-bx=-g\bp x+g\wp(a(1-g)+bx)=
g\bp(g\wp x)+g\wp(g\bp x)\\&\overset{\eqref{deformedassociativity}}{=} (g(g\bp g))\wp x+(g(g\wp g))\bp x=1\bp x=0
\end{split}
\]
so that $b=0$ and then 
\[
\begin{split}
g(g\bp x)&=(g\bp g)(g\wp x)+(g\wp g)(g\bp x)=g\bp(gx)=-g\bp(xg)\overset{\eqref{multiplicativity}}{=}-(g\bp x)(g\wp g)-(g\wp x)(g\bp g)\\&=-(g\bp x)g,
\end{split}
\]
i.e.\ $ag(1-g)=-a(1-g)g$, from which $a=0$. Therefore, we get
\[
\begin{tabular}{c|cccc}
    $\bp$& $1$ & $g$ & $x$ & $gx$\\ \hline
    $1$ & $0$ & $0$ & $0$ & $0$ \\ 
    $g$ & $0$ & $0$ & $0$ & $0$ \\ 
    $x$ & $0$ & $0$ & $a'(1-g)+b'x$ & $-a'(1-g)+b'gx$ \\ 
    $gx$ & $0$ & $0$ & $a''(1-g)+b''x$ & $-a''(1-g)+b''gx$\\ 
\end{tabular}
\]
Moreover, we have
\begin{align*}
x\bp(gx)&=(x\bp g)(1\wp x)+(g\bp g)(x\wp x)+(x\wp g)(1\bp x)+(g\wp g)(x\bp x)=g(x\bp x)=a'(g-1)+b'gx\\
x\bp(xg)&=(x\bp x)(1\wp g)+(g\bp x)(x\wp g)+(x\wp x)(1\bp g)+(g\wp x)(x\bp g)=(x\bp x)g=a'(g-1)+b'xg
\end{align*}
from which $a'=0$ follows. Similarly, one obtains $a''=0$. Finally, computing 
\[
-b'x=b'g\wp x=g\wp(x\bp x)=g\bp(x\wp x)+g\wp(x\bp x)= (g(g\bp x))\wp x+(g(g\wp x))\bp x=-gx\bp x=-b''x
\]
we get $b'=b''$. Therefore, $\bp$ must be of the form
\[
\begin{tabular}{c|cccc}
    $\bp$& $1$ & $g$ & $x$ & $gx$\\ \hline
    $1$ & $0$ & $0$ & $0$ & $0$ \\ 
    $g$ & $0$ & $0$ & $0$ & $0$ \\ 
    $x$ & $0$ & $0$ & $\lambda x$ & $\lambda gx$ \\ 
    $gx$ & $0$ & $0$ & $\lambda x$ & $\lambda gx$ \\ 
\end{tabular}
\]
for $\lambda\in\Bbbk$. One can check that the latter defines an exhaustive 1-parameter family of infinitesimal post-Hopf algebra structures for both the post-Hopf algebra structures $(H_{4},\wp_{0})$ and $(H_{4},\wp_{1})$.

\subsubsection{An adjunction with infinitesimal post-Lie algebras}

The next result shows that the space of primitive elements of an infinitesimal post-Hopf algebra is an infinitesimal post-Lie algebra, as expected.

\begin{proposition}\label{prop:primitive}
Let $(H,\wp,\bp)$ be an infinitesimal post-Hopf algebra. Then \[(P(H),\wt:=\wp|_{P(H)\ot P(H)},\bt:=\bp|_{P(H)\ot P(H)})\] is an infinitesimal post-Lie algebra. Moreover, the  assignment $P:\mathrm{IPHopf}\to\mathrm{IPLie}$ is functorial. 
\end{proposition}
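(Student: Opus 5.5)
We already know from \cite[Theorem 2.7]{LST} that $(P(H),[\cdot,\cdot],\wt)$, with $[x,y]=xy-yx$ and $\wt=\wp|_{P(H)\ot P(H)}$, is a post-Lie algebra. So the plan has three steps. First, show that $\bt$ takes values in $P(H)$. Second, verify \eqref{eq:infpostlie:1} and \eqref{eq:infpostlie:2}. Third, check functoriality.

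For closure, take $x,y\in P(H)$ and expand \eqref{deltalinear} with $\Delta(x)=x\ot1+1\ot x$ and likewise for $y$. Each side becomes a sum of four terms, and we simplify using:
\begin{itemize}
\item Lemma \ref{lem:propertiesinfpostHopf}: $1\bp a=a\bp 1=0$;
\item $1\wp b=b$;
\item $a\wp 1=\varepsilon(a)1$, which gives $x\wp 1=0$.
\end{itemize}
The surviving terms are $(1\wp y)\ot(x\bp 1)=0$, $(x\wp 1)\ot(1\bp y)=0$, $(1\wp 1)\ot(x\bp y)$ and $(x\bp y)\ot(1\wp 1)$. Hence
\[
\Delta(x\bp y)=1\ot(x\bp y)+(x\bp y)\ot 1,
\]
so $x\bp y\in P(H)$.

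For \eqref{eq:infpostlie:1}, apply \eqref{multiplicativity} with $a=x$ and expand $\Delta(x)$:
\[
x\bp(yz)=(x\bp y)(1\wp z)+(1\bp y)(x\wp z)+(x\wp y)(1\bp z)+(1\wp y)(x\bp z)=(x\bp y)z+y(x\bp z),
\]
using $1\bp y=1\bp z=0$ and $1\wp z=z$. Antisymmetrizing in $y,z$ gives $x\bp[y,z]=[x\bp y,z]+[y,x\bp z]$.

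For \eqref{eq:infpostlie:2}, apply \eqref{deformedassociativity} with $a=x$ and $b=y$. Expanding $\Delta(x)$, the left side is exactly $\R(x,y,z)$. For the right side:
\begin{itemize}
\item the first term gives $(x(1\bp y))\wp z+(1\cdot(x\bp y))\wp z=(x\bp y)\wp z$;
\item the second term gives $(xy)\bp z+(x\wp y)\bp z$.
\end{itemize}
So
\[
\R(x,y,z)=(xy)\bp z+\L(x,y,z).
\]
Subtracting the same identity with $x,y$ swapped yields $[x,y]\bp z=\R(x,y,z)-\R(y,x,z)-\L(x,y,z)+\L(y,x,z)$, which is \eqref{eq:infpostlie:2}. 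This mirrors how \eqref{eq:postlie:2} is derived from \eqref{wpdefassoci} in the non-infinitesimal case.

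Functoriality is routine. A morphism $f$ in $\mathrm{IPHopf}$ is a Hopf map, so it sends primitives to primitives. Its restriction is a post-Lie morphism by the known case, and it commutes with $\bt$ because $f(a\bp b)=f(a)\bp' f(b)$. Identities and composition are clearly preserved.

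The only delicate point is bookkeeping in the Sweedler expansions: one must use the vanishing results of Lemma \ref{lem:propertiesinfpostHopf} and $x\wp 1=\varepsilon(x)1=0$ in the right places. No term requires anything beyond these identities, so I do not expect a real obstacle.
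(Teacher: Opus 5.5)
Your proposal is correct and follows the paper's proof essentially step for step. Closure of $\bp$ on $P(H)$ comes from \eqref{deltalinear} together with Lemma \ref{lem:propertiesinfpostHopf}, \eqref{eq:infpostlie:1} from \eqref{multiplicativity}, \eqref{eq:infpostlie:2} by antisymmetrizing $\R(x,y,z)=(xy)\bp z+\L(x,y,z)$ obtained from \eqref{deformedassociativity}, and functoriality by restriction. The only blemish is cosmetic: the right-hand side of \eqref{deltalinear} expands into eight terms rather than four, but every term you omit contains a factor $1\bp 1$, $x\bp 1$ or $1\bp y$ and so vanishes.
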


\begin{proof}
    It is known that, since $(H,\wp)$ is a post-Hopf algebra, then $(P(H),\wt:=\wp)$ is a post-Lie algebra, where $P(H)$ is a Lie algebra with bracket $[a,b]:=ab-ba$. We observe that, given $a,b\in P(H)$, we have
\[
\begin{split}
\Delta(a\bp b)&\overset{\eqref{deltalinear}}{=}(a_{1}\wp b_{1})\ot(a_{2}\bp b_{2})+(a_{1}\bp b_{1})\ot(a_{2}\wp b_{2})\\&=(a\wp b)\ot(1\bp1)+(a\wp1)\ot(1\bp b)+(1\wp1)\ot(a\bp b)+(1\wp b)\ot(a\bp1)\\&+(a\bp b)\ot(1\wp1)+(a\bp1)\ot(1\wp b)+(1\bp1)\ot(a\wp b)+(1\bp b)\ot(a\wp1)\\&=1\ot(a\bp b)+(a\bp b)\ot1
\end{split}
\]
since $a\bp1=1\bp a=0$ for all $a\in H$. Thus, we have a morphism $\bp:P(H)\ot P(H)\to P(H)$. We compute
\[
\begin{split}
a\bp[b,c]&=a\bp(bc)-a\bp(cb)\\&\overset{\eqref{multiplicativity}}{=}(a_{1}\bp b)(a_{2}\wp c)+(a_{1}\wp b)(a_{2}\bp c)-(a_{1}\bp c)(a_{2}\wp b)-(a_{1}\wp c)(a_{2}\bp b)\\&=(a\bp b)c+(1\bp b)(a\wp c)+(a\wp b)(1\bp c)+b(a\bp c)\\&-(a\bp c)b-(1\bp c)(a\wp b)-(a\wp c)(1\bp b)-c(a\bp b)\\&=(a\bp b)c+b(a\bp c)-(a\bp c)b-c(a\bp b) \\
&= [a \bp b,c] + [b, a \bp c],
\end{split}
\]
so \eqref{eq:infpostlie:1} is satisfied. Moreover, we obtain
\[
\begin{split}
&\big(a\bp(b\wp c)\big)+\big(a\wp(b\bp c)\big)-\big(b\bp(a\wp c)\big)-\big(b\wp(a\bp c)\big)\overset{\eqref{deformedassociativity}}{=}\\&(a_{1}(a_{2}\bp b))\wp c
+(a_{1}(a_{2}\wp b))\bp c -(b_{1}(b_{2}\bp a))\wp c-(b_{1}(b_{2}\wp a))\bp c= \\&(a\bp b)\wp c
+(ab)\bp c +(a\wp b)\bp c-(b\bp a)\wp c-(ba)\bp c -(b\wp a)\bp c=\\&([a,b]+(a\wp b)-(b\wp a))\bp c+((a\bp b)-(b\bp a))\wp c,
\end{split}
\]
hence also \eqref{eq:infpostlie:2} holds. Therefore, $(P(H),\wt:=\wp,\bt:=\bp)$ is an infinitesimal post-Lie algebra. Clearly, a functor $P:\mathrm{IPHopf}\to\mathrm{IPLie}$ is obtained that sends a morphism $f$ to its restriction $f|_{P(H)}$.
\end{proof}

Next, let $(\mathfrak{g},[\cdot,\cdot],\wt)$ be a post-Lie algebra. According to Theorem \ref{thm:postHopfonuniversalenveloping}, the universal enveloping algebra $\mathrm{U}(\mathfrak{g})$ is a post-Hopf algebra with $\wp\colon \mathrm{U}(\mathfrak{g})\otimes \mathrm{U}(\mathfrak{g})\to \mathrm{U}(\mathfrak{g})$ determined by $\wp|_{\mathfrak{g}\otimes\mathfrak{g}}=\wt$ and inductively extended by
\[
(xy)\wp z=x\wt (y\wt z)-(x\wt y)\wt z,
\]
as well as 
\[
x\wp(yz)=(x_1\wp y)(x_2\wp z)=(x\wt y)z+y(x\wt z)
\]
for all $x,y,z\in\mathfrak{g}$. The natural question arises whether $\mathrm{U}(\mathfrak{g})$ even becomes an infinitesimal post-Hopf algebra if $\mathfrak{g}$ is an infinitesimal post-Lie algebra. The affirmative answer to this question is given via the following result.

\begin{proposition}\label{thm:universal}
Let $(\mathfrak{g},[\cdot,\cdot],\wt,\bt)$ be an infinitesimal post--Lie algebra. Then $(\mathrm{U}(\mathfrak{g}),\wp,\bp)$ is an infinitesimal post-Hopf algebra, where $(\mathrm{U}(\mathfrak{g}),\wp)$ is the post-Hopf algebra defined as before, and $\bp\colon \mathrm{U}(\mathfrak{g})\otimes \mathrm{U}(\mathfrak{g})\to \mathrm{U}(\mathfrak{g})$ is determined by $\bp|_{\mathfrak{g}\otimes\mathfrak{g}}=\bt$ and extended inductively as $1\bp x=0$,
\begin{equation}\label{rhdext1}
    (xy)\bp z
    =x\bt(y\wt z)-(x\bt y)\wt z
    +x\wt(y\bt z)-(x\wt y)\bt z
\end{equation}
and $x \bp1=0$,
\begin{equation}\label{rhdext2}
    x \bp(yz)
    =(x_1 \bp y)(x_2\wp z)
    +(x_1\wp y)(x_2 \bp z)
    =(x\bt y)z+y(x\bt z)
\end{equation}
for all $x,y,z\in\mathfrak{g}$. Moreover, the assignment $\mathrm{U}:\mathrm{IPLie}\to\mathrm{IPHopf}$ is functorial.
\end{proposition}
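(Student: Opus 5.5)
The cleanest route is the deformation picture that motivates both definitions. I would set $R=\Bbbk[\hbar]/(\hbar^2)$ and consider the $R$-linear post-Lie algebra $\tilde{\mathfrak{g}}:=(\mathfrak{g}\otimes R,[\cdot,\cdot],\widetilde{\wt}=\wt+\hbar\bt)$. By Remark \ref{def-inf-deformation} with $\{\cdot,\cdot\}=0$, or by expanding the post-Lie axioms and comparing $\hbar$-coefficients, this is exactly equivalent to $(\mathfrak{g},[\cdot,\cdot],\wt,\bt)$ being an infinitesimal post-Lie algebra.

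The first step applies Theorem \ref{thm:postHopfonuniversalenveloping} over the base ring $R$. Its proof is purely algebraic: it uses the PBW filtration only through the universal property and the recursive definition of $\wp$. It therefore holds for Lie algebras that are free $R$-modules, and $\mathrm{U}_R(\mathfrak{g}\otimes R)\cong\mathrm{U}(\mathfrak{g})\otimes R$ as Hopf algebras over $R$. This yields an $R$-bilinear coalgebra map $\widetilde{\wp}$ on $\mathrm{U}(\mathfrak{g})\otimes R$ satisfying \eqref{wpproduct} and \eqref{wpdefassoci}. I then write $\widetilde{\wp}=\wp+\hbar\bp$.

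The $\hbar^0$-part is the old $\wp$, since the recursion reduces modulo $\hbar$ to that of Theorem \ref{thm:postHopfonuniversalenveloping}. The $\hbar^1$-part $\bp$ satisfies the recursions \eqref{rhdext1} and \eqref{rhdext2}, because expanding
\[
(xy)\widetilde{\wp}z=x\widetilde{\wt}(y\widetilde{\wt}z)-(x\widetilde{\wt}y)\widetilde{\wt}z
\]
and $x\widetilde{\wp}(yz)$ to first order gives precisely those formulas. The unit conditions $1\bp u=0$ and $u\bp 1=0$ follow because $1\widetilde{\wp}u=u$ and $u\widetilde{\wp}1=\varepsilon(u)1$ hold already at $\hbar^0$. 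This shows that $\bp$ as defined in the statement is well defined and agrees with the first-order part.

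The second step extracts the axioms. Take the $\hbar$-coefficient of three identities:
\begin{itemize}
\item $\Delta\circ\widetilde{\wp}=(\widetilde{\wp}\otimes\widetilde{\wp})\Delta_{\otimes}$, which gives \eqref{deltalinear};
\item \eqref{wpproduct} for $\widetilde{\wp}$, which gives \eqref{multiplicativity};
\item \eqref{wpdefassoci} for $\widetilde{\wp}$, which gives \eqref{deformedassociativity}.
\end{itemize}
The coproduct and product are undeformed, so each expansion is immediate. Convolution invertibility is not needed for $\bp$, since it belongs to the undeformed post-Hopf structure.

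I expect the main obstacle to be justifying the base change to $R$ rather than just quoting Theorem \ref{thm:postHopfonuniversalenveloping} over a field. In particular, I need well-definedness of $\widetilde{\wp}$ on the quotient by $xy-yx-[x,y]$, and that $\widetilde{\wp}$ is a coalgebra map. If this is not acceptable, the fallback is a direct construction.
\begin{itemize}
\item First define $x\bp(-)$ for $x\in\mathfrak{g}$ as the unique map $\mathrm{U}(\mathfrak{g})\to\mathrm{U}(\mathfrak{g})$ given by \eqref{rhdext2}. It is a derivation extending $E_x\in\Der(\mathfrak{g})$, and it is well defined by \eqref{eq:infpostlie:1}.
\item Then extend in the first argument through the tensor algebra via the recursion \eqref{rhdext1} together with its analogue for longer words.
\item Finally check that the result descends to $\mathrm{U}(\mathfrak{g})$. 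This reduces, using \eqref{eq:interpretation-inf-post-Lie}, to the identity $E_{[x,y]}=[D_x,E_y]-\dots$ extended to derivations of $\mathrm{U}(\mathfrak{g})$.
\end{itemize}
Along this route the three axioms would be proved by induction on word length, generator by generator.

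Functoriality is the last step. Given a morphism $f$ of infinitesimal post-Lie algebras, $\mathrm{U}(f)$ is a post-Hopf morphism by the known result. It also intertwines $\bp$: the two maps $\mathrm{U}(f)(u\bp v)$ and $\mathrm{U}(f)(u)\bp'\mathrm{U}(f)(v)$ agree on generators and satisfy the same recursions \eqref{rhdext1} and \eqref{rhdext2}, so they agree by induction on word length.
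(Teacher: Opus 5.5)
Your proposal is correct, but its main route differs from the paper's. You realize $\bp$ as the first-order part of the post-Hopf structure $\widetilde{\wp}=\wp+\hbar\bp$. This structure is produced by applying Theorem~\ref{thm:postHopfonuniversalenveloping} over $R=\Bbbk[\hbar]/(\hbar^2)$ to $(\mathfrak{g}\otimes R,[\cdot,\cdot],\wt+\hbar\bt)$, and it lives on $\mathrm{U}_R(\mathfrak{g}\otimes R)\cong\mathrm{U}(\mathfrak{g})\otimes R$. The recursions \eqref{rhdext1}, \eqref{rhdext2}, \eqref{Ugext1} are then the $\hbar$-coefficients of the recursion in that theorem. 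The axioms \eqref{deltalinear}, \eqref{multiplicativity}, \eqref{deformedassociativity} are the $\hbar$-coefficients of the coalgebra-map property, \eqref{wpproduct} and \eqref{wpdefassoci} for $\widetilde{\wp}$. Your coefficient extraction is right.

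The paper instead stays over $\Bbbk$. It checks by hand that the recursions kill $yz-zy-[y,z]$ (using \eqref{eq:infpostlie:1}) and $xy-yx-[x,y]$ (using \eqref{eq:infpostlie:2}). It then verifies the three axioms on primitive elements only, appealing to the universal property of $\mathrm{U}(\mathfrak{g})$.

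Your route is more conceptual. It turns the deformation heuristic of the introduction into the proof and explains where the extension formulas come from. It also gives the axioms on all of $\mathrm{U}(\mathfrak{g})$ at once, so no reduction to generators is needed.

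Its price is the step you flag yourself. Because $R=\Bbbk\oplus\hbar\Bbbk$, well-definedness of $\widetilde{\wp}$ on $\mathrm{U}_R$ and its coalgebra-map property are literally equivalent to the corresponding statements for $\wp$, together with well-definedness of $\bp$ and \eqref{deltalinear}. So the deformation argument only saves work if you grant that the proof of Theorem~\ref{thm:postHopfonuniversalenveloping} is a formal computation valid for Lie algebras that are free over an arbitrary commutative ring. That is true, and you rightly note that neither convolution invertibility nor the PBW-based identification $\mathrm{U}(\mathfrak{g})_{\wp}\cong\mathrm{U}(\mathfrak{g}_{\wt})$ is needed. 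But it is an assertion about the cited proof rather than something you check.

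If one does not grant it, your fallback is essentially the paper's argument. There, your observation is slightly more complete than the paper's check with $u=z\in\mathfrak{g}$: $(xy-yx-[x,y])\bp u=0$ holds for every $u\in\mathrm{U}(\mathfrak{g})$, by comparing both sides of \eqref{eq:interpretation-inf-post-Lie} as derivations of $\mathrm{U}(\mathfrak{g})$.
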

\begin{proof}
By Theorem \ref{thm:postHopfonuniversalenveloping}, there is a post-Hopf algebra $(\mathrm{U}(\mathfrak{g}),\wp)$, as described before. It remains to show that \eqref{rhdext1} and \eqref{rhdext2} describe an infinitesimal structure $ \bp\ \colon \mathrm{U}(\mathfrak{g})\otimes \mathrm{U}(\mathfrak{g})\to \mathrm{U}(\mathfrak{g})$ for this post-Hopf algebra. We first prove that \eqref{rhdext1} and \eqref{rhdext2} are well-defined. Explicitly, \eqref{rhdext1} and \eqref{rhdext2} are well-defined on the tensor algebra, and we show that they descend to the quotient $\mathrm{U}(\mathfrak{g})$. Starting with \eqref{rhdext2}, we prove that $yz-zy-[y,z]$, with $y,z\in\mathfrak{g}$, is in the kernel of $x \bp(\cdot)\colon T(\mathfrak{g})\to \mathrm{U}(\mathfrak{g})$:
\begin{align*}
    x \bp(yz-zy-[y,z])
    &=(x\bt y)z+y(x\bt z)
    -(x\bt z)y-z(x\bt y)
    -x\bt[y,z]\\
    &\overset{\eqref{eq:infpostlie:1}}{=}(x\bt y)z+y(x\bt z)
    -(x\bt z)y-z(x\bt y)
    -[x\bt y,z]-[y,x\bt z]\\
    &=0,
\end{align*}
using that $[x,y]=xy-yx$ on the universal enveloping algebra. Thus,
\begin{equation}\label{Ugext2}
    x \bp(\cdot)\colon \mathrm{U}(\mathfrak{g})\to \mathrm{U}(\mathfrak{g}),\qquad
    x^1\cdots x^n\mapsto x \bp(x^1\cdots x^n):=\sum_{i=1}^nx^1\cdots(x\bt x^i)\cdots x^n
\end{equation}
is well-defined for all $x\in\mathfrak{g}$. 

Similarly for \eqref{rhdext1}, let $x,y,z\in\mathfrak{g}$. We have to show that $xy-yx-[x,y]$ is in the kernel of $(\cdot) \bp z\colon T(\mathfrak{g})\to\mathfrak{g}$. In fact,
\begin{align*}
    (xy-yx-[x,y]) \bp z
    &=x\bt(y \wt z)-(x\bt y) \wt z
    +x \wt (y\bt z)-(x \wt y)\bt z\\
    &\quad-(y\bt(x \wt z)-(y\bt x) \wt z
    +y \wt (x\bt z)-(y \wt x)\bt z)\\
    &\quad-[x,y]\bt z\\
    &\overset{\eqref{eq:infpostlie:2}}{=}0.
\end{align*}
Inductively, we extend \eqref{rhdext1} to
\begin{equation}\label{Ugext1}
\begin{split}
    (x^1\cdots x^n) \bp u&:=x^1 \bp((x^2\cdots x^n)\wp u)
+x^1\wp ((x^2\cdots x^n) \bp u)\\
&\quad-(x^1 \bp(x^2\cdots x^n))\wp u
-(x^1\wp(x^2\cdots x^n)) \bp u,
\end{split}
\end{equation}
with $n>0$, $x^1,\ldots,x^n\in\mathfrak{g}$ and $u\in \mathrm{U}(\mathfrak{g})$. By the universal property of $\mathrm{U}(\mathfrak{g})$ this determines a well-defined map $ \bp\colon \mathrm{U}(\mathfrak{g})\otimes \mathrm{U}(\mathfrak{g})\to \mathrm{U}(\mathfrak{g})$, which restricts to \eqref{Ugext1} and \eqref{Ugext2}. We prove that this map is an infinitesimal structure for the post-Hopf algebra $(\mathrm{U}(\mathfrak{g}),\wp )$. Again by the universal property of $\mathrm{U}(\mathfrak{g})$ it is sufficient to prove \eqref{deltalinear}, \eqref{multiplicativity}, \eqref{deformedassociativity},
on primitive elements $x,y,z\in\mathfrak{g}$. In fact, using $1 \bp x=0=x \bp 1=1 \bp 1$, we obtain
\begin{align*}
    (x_1\wp  y_1)\otimes(x_2 \bp y_2)+(x_1 \bp y_1)\otimes(x_2\wp y_2)
    =1\otimes(x\bt y)+(x\bt y)\otimes 1
    =\Delta(x\bt y)=\Delta(x\bp y)
\end{align*}
since $x\bt y\in\mathfrak{g}$ is primitive,
\begin{align*}
    (x_1(x_2 \bp y))\wp z
    +(x_1(x_2\wp y)) \bp z
    &=(x\bt y) \wt z
    +(xy) \bp z
    +(x \wt y)\bt z\\
    &\overset{\eqref{rhdext1}}{=}x\bt(y \wt z)
    +x \wt(y\bt z)\\&=x\bp(y \wp z)
    +x \wp(y\bp z)
\end{align*}
and
\begin{align*}
    (x_1 \bp y)(x_2\wp z)
    +(x_1\wp y)(x_2 \bp z)
    =(x\bt y)z+y(x\bt z)
    \overset{\eqref{rhdext2}}{=}x\bp(yz).
\end{align*}
Functoriality is straightforward to verify by the universal property of $\mathrm{U}(\mathfrak{g})$. This concludes the proof.
\end{proof}

Finally, we show that the functors $P:\mathrm{IPHopf}\to\mathrm{IPLie}$ and $\mathrm{U}:\mathrm{IPLie}\to\mathrm{IPHopf}$ constructed in Proposition~\ref{prop:primitive} and Proposition \ref{thm:universal}, respectively, provide an adjunction.

\begin{theorem}\label{thm:main}
The pair $(\mathrm{U},P)$ provides an adjunction between the categories $\mathrm{IPHopf}$ and $\mathrm{IPLie}$. More precisely, given an object $(\mathfrak{g},\wt,\bt)$ in $\mathrm{IPLie}$ and an object $(H,\wp,\bp)$ in $\mathrm{IPHopf}$, there is a natural bijection 
\[
\mathrm{Hom}_{\mathrm{IPHopf}}\big(\mathrm{U}(\mathfrak{g}),H\big)\cong\mathrm{Hom}_{\mathrm{IPLie}}\big(\mathfrak{g},P(H)\big).
\]
Moreover, if the base field has characteristic 0, the adjunction becomes an equivalence between cocommutative connected infinitesimal post-Hopf algebras and infinitesimal post-Lie algebras.
\end{theorem}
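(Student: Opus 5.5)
The plan is to lift the known adjunction \eqref{ad:PLiePHopf}, with the maps $\Phi$ and $\Psi$ of \eqref{def:Phi} and \eqref{def:Psi}, to the infinitesimal setting. By Propositions \ref{prop:primitive} and \ref{thm:universal}, $P$ and $\mathrm{U}$ are functors between $\mathrm{IPHopf}$ and $\mathrm{IPLie}$. A morphism in $\mathrm{IPHopf}$ or $\mathrm{IPLie}$ is a morphism in $\mathrm{PHopf}$ or $\mathrm{PLie}$ that also intertwines $\bp$ or $\bt$. So it suffices to show two things: $\Phi$ maps $\mathrm{Hom}_{\mathrm{IPHopf}}(\mathrm{U}(\mathfrak{g}),H)$ into $\mathrm{Hom}_{\mathrm{IPLie}}(\mathfrak{g},P(H))$, and $\Psi$ maps in the opposite direction. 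Bijectivity and naturality are then inherited from the non-infinitesimal adjunction, since the maps are the same.

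For $\Phi$: if $f$ preserves $\bp$, then its restriction to $\mathfrak{g}\subseteq P(\mathrm{U}(\mathfrak{g}))$ preserves $\bt=\bp|_{\mathfrak{g}\otimes\mathfrak{g}}$, which is immediate. For $\Psi$: let $f\colon\mathfrak{g}\to P(H)$ be a morphism in $\mathrm{IPLie}$ and $F:=\Psi(f)$. We already know $F$ is a morphism of post-Hopf algebras, and we must show $F(u\bp v)=F(u)\bp F(v)$ for all $u,v\in\mathrm{U}(\mathfrak{g})$. I would argue by induction on word length, in two stages. First take $u=x\in\mathfrak{g}$ and argue by induction on $v=y^1\cdots y^m$. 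The case $v=1$ holds by Lemma \ref{lem:propertiesinfpostHopf}. For the inductive step, use \eqref{multiplicativity} in $H$, the rule \eqref{rhdext2} in $\mathrm{U}(\mathfrak{g})$, and the facts that $F$ is multiplicative and preserves $\wp$ and $\Delta$. Second, induct on the length of $u=x^1\cdots x^n$ using \eqref{Ugext1}. The right-hand side of \eqref{Ugext1} involves only $\bp$ with a shorter left factor, $\bp$ with left factor $x^1$, and $\wp$. In $H$, the same identity holds for $F(x^1)\bp\cdot$ applied to $F(x^2\cdots x^n)$: specializing \eqref{deformedassociativity} to $a=F(x^1)$ primitive gives
\[
(F(x^1)F(w))\bp c = F(x^1)\bp(F(w)\wp c)+F(x^1)\wp(F(w)\bp c)-(F(x^1)\bp F(w))\wp c-(F(x^1)\wp F(w))\bp c,
\]
with $w=x^2\cdots x^n$, using $1\bp\cdot=0$. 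Hence $F$ intertwines $\bp$.

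For the equivalence in characteristic $0$, I would invoke Remark \ref{rmk:MilnorMoorepostHopf}. There, the unit $\eta_{\mathfrak{g}}$ is the identity onto $P(\mathrm{U}(\mathfrak{g}))$, and for cocommutative connected $H$ the counit $\epsilon_H\colon\mathrm{U}(P(H))\to H$ is an isomorphism in $\mathrm{PHopf}$. The counit is $\Psi(\mathrm{id}_{P(H)})$, so by the above it is a morphism in $\mathrm{IPHopf}$. A bijective morphism whose inverse is a post-Hopf morphism automatically has an inverse that preserves $\bp$, because $\epsilon_H^{-1}(a\bp b)=\epsilon_H^{-1}(\epsilon_H(u)\bp\epsilon_H(v))=u\bp v$. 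Similarly, $\eta_{\mathfrak{g}}$ is the identity and preserves $\bt$ by construction. Finally, $\mathrm{U}(\mathfrak{g})$ is cocommutative and connected, so $\mathrm{U}$ lands in that subcategory, and unit and counit are isomorphisms there.

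I expect the main obstacle to be the inductive step for left factors of length greater than one. One must check carefully that the recursive definition \eqref{Ugext1} in $\mathrm{U}(\mathfrak{g})$ matches an identity actually valid in $H$. This requires rearranging \eqref{deformedassociativity} with the Sweedler expansion of a primitive element, where terms involving $1\bp\cdot$ vanish. The rest is bookkeeping.
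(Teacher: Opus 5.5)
Your proposal is correct and follows the paper's proof. Like the paper, you lift $\Phi$ and $\Psi$ and check that $\Psi(f)$ intertwines $\bp$ via \eqref{multiplicativity}/\eqref{rhdext2} and \eqref{deformedassociativity}/\eqref{rhdext1}. You then use Remark \ref{rmk:MilnorMoorepostHopf} together with the fact that the unit and counit are infinitesimal morphisms. Your induction on word length, including the observation that $x^1\wp w$ is again a sum of words of length $n-1$, and your argument that $\epsilon_H^{-1}$ automatically preserves $\bp$, make explicit two steps the paper only checks on words of length two or leaves implicit.
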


\begin{proof}
This amounts to extending the isomorphisms \eqref{def:Phi}-\eqref{def:Psi} to the infinitesimal setting.
Namely, we have to verify that, given $f \in \mathrm{Hom}_{\mathrm{IPHopf}}\big(\mathrm{U}(\mathfrak{g}),H\big)$, 
the morphism $\Phi(f)$ 
belongs to $\mathrm{Hom}_{\mathrm{IPLie}}\big(\mathfrak{g},P(H)\big)$ and, conversely, for any $f \in \mathrm{Hom}_{\mathrm{IPLie}}\big(\mathfrak{g},P(H)\big)$ 
the morphism $\Psi(f)$ belongs to $\mathrm{Hom}_{\mathrm{IPHopf}}\big(\mathrm{U}(\mathfrak{g}),H\big)$. The first assertion is trivial by construction. To show the second, we compute:
\begin{align*}
&\Psi(f)((xy)\bp z)
    \\&\overset{\eqref{rhdext1}}{=}f\big(x\bt(y\wt z)-(x\bt y)\wt z
    +x\wt(y\bt z)-(x\wt y)\bt z\big)\\&\hspace{0.15cm}=f(x)\bp(f(y)\wp f(z))-(f(x)\bp f(y))\wp f(z)
    +f(x)\wp(f(y)\bp f(z))-(f(x)\wp f(y))\bp f(z)\\&\overset{\eqref{deformedassociativity}}{=}(f(x)f(y))\bp f(z)=\Psi(f)(xy)\bp\Psi(f)(z)
\end{align*}
and
\begin{align*}
    \Psi(f)(x \bp(yz))
    &\overset{\eqref{rhdext2}}{=}\Psi(f)\big((x\bt y)z+y(x\bt z)\big)=f(x\bt y)f(z)+f(y)f(x\bt z)\\&=(f(x)\bp f(y))f(z)+f(y)(f(x)\bp f(z))\overset{\eqref{multiplicativity}}{=}f(x)\bp(f(y)f(z))\\&=\Psi(f)(x)\bp\Psi(f)(yz).
\end{align*}
If we assume that $\Bbbk$ has characteristic 0, we know that the $H$-component of the counit $\epsilon_{H}$ is an isomorphism in $\mathrm{PHopf}$, for any cocommutative connected infinitesimal post-Hopf algebra $(H,\wp,\bp)$, and the $\g$-component of the unit $\eta_{\g}$ is the identity as a morphism in $\mathrm{PLie}$, for any infinitesimal post-Lie algebra $(\g,\wt,\bt)$ (see Remark \ref{rmk:MilnorMoorepostHopf}). Since $\epsilon_{H}$ is a morphism in $\mathrm{IPHopf}$ and $\eta_{\g}$ is a morphism in $\mathrm{IPLie}$, we can conclude the proof.
\end{proof}

\section{On the operad $\mathcal{IPL}$ of infinitesimal post-Lie algebras}\label{sec:operad}
In this section, we first give a brief introduction to linear symmetric operads. We then define the quadratic operad $\mathcal{IPL}$ of infinitesimal post-Lie algebras, and we show its Koszulity using filtered distributive laws \cite{DG}. A precise treatment of operads is out of the scopes of this paper; for more details, we refer to \cite{MSS,LodayVallette,Felicia}.
\subsection{Linear symmetric operads}
Let us start with a precise definition of linear symmetric operads. 
\begin{definition}[{\cite[Definition 1.4]{MSS}}]
Let $\Bbbk$ be a field. A \textbf{linear symmetric operad} $\mathcal P$
(over $\Bbbk$) consists of:

\begin{enumerate}
\item A family of vector spaces $\mathcal{P}=\{ \mathcal{P}(n)\}_{n \geq 0}$.
\item For each $n \geq 0$, a right action of the symmetric group $\mathbb{S}_n$ on
$ \mathcal{P}(n)$:
\[
 \mathcal{P}(n) \times \mathbb{S}_n \to \mathcal{P}(n),
\qquad (f,\sigma)\mapsto f\cdot\sigma.
\]

\item A family of linear maps $\{ \gamma_{k_1,\ldots,k_n}\}_{n,k_1,\ldots,k_n \geq 0}$, (called composition maps)
\begin{align*}
\gamma_{k_1,\ldots,k_n}: \mathcal{P}(n) \ten \mathcal{P}(k_1) \ten \cdots \ten \mathcal{P}(k_n) &\to \mathcal{P}(k_1 + \cdots + k_n).
\end{align*}

\item A unit element $\mathbf 1\in \mathcal{P}(1)$.
\end{enumerate}

We shall use the notation $\gamma(f;g_1,\ldots,g_n):= \gamma_{k_1,\ldots,k_n}(f \ten g_1 \ten \ldots \ten g_n)$. These data are required to satisfy the following properties:

\paragraph{Associativity.}
For every $f\in\mathcal P(n)$, $g_i\in\mathcal P(k_i)$, and $h_{ij}\in\mathcal P(l_{ij})$ (where $1 \leq i \leq n$ and $1 \leq j \leq k_i$), one has
\[
\gamma\!\bigl(
\gamma(f;g_1,\dots,g_n);
h_{11},\dots,h_{n k_n}
\bigr)
=
\gamma\!\bigl(
f;
\gamma(g_1;h_{11},\dots,h_{1k_1}),
\dots,
\gamma(g_n;h_{n1},\dots,h_{nk_n})
\bigr).
\]

\paragraph{Unit.}
For every $f\in\mathcal P(n)$, one has 
\[
\gamma(\mathbf 1;f)=\gamma(f;\mathbf 1,\dots,\mathbf 1)=f.
\]

\paragraph{Equivariance.}
For every $\sigma \in \mathbb{S}_n$ and
$\sigma_i \in \mathbb{S}_{k_i}$ (with $1 \le i \le n$), one has
\begin{align*}
\gamma(f \cdot \sigma; g_1,\ldots,g_n)
&=
\gamma\bigl(
f;
g_{\sigma^{-1}(1)},
\ldots,
g_{\sigma^{-1}(n)}
\bigr)\\
\gamma(f;
g_1 \cdot \sigma_1,
\ldots,
g_n \cdot \sigma_n
)
&=
\gamma(f;g_1,\ldots,g_n)
\cdot
(\sigma_1 \oplus \cdots \oplus \sigma_n),
\end{align*}
where $\sigma_1 \oplus \cdots \oplus \sigma_n
\in
\mathbb{S}_{k_1+\cdots+k_n}$
denotes the block permutation acting by $\sigma_i$ on the
$i$-th block of size $k_i$.
\end{definition}
If $\mu \in \mathcal{P}(n)$, we say that $\mu$ has \textbf{arity} $n$. 
We now briefly recall the description of symmetric operads by generators
and relations.
Let $\mathcal{V}=(\mathcal{V}(n))_{n\geq 0}$ be an $\mathbb S$-module, i.e.\ a family of vector spaces endowed with right actions of the symmetric groups $\mathbb{S}_n$.
Following \cite[1.9]{MSS}, the free symmetric operad generated by $\mathcal{V}$, denoted by $\mathcal{F}(\mathcal{V})$, is the operad whose elements are finite linear combinations of decorated rooted trees whose vertices are labeled by elements of $\mathcal{V}$, where the composition maps are given by grafting trees, and the actions of the symmetric groups are induced by permuting the labels of the leaves. The operad $\mathcal{F}(\mathcal{V})$ has the following universal property (see \cite[5.5]{LodayVallette}): for every symmetric operad $\mathcal P$ and every morphism of $\mathbb S$-modules $f:\mathcal{V}\to \mathcal{P}$
there exists a unique operad morphism $\widetilde f:\mathcal{F}(\mathcal{V})\to \mathcal{P}$
such that $\widetilde f\circ\iota=f$,
where $\iota:\mathcal{V} \hookrightarrow\mathcal{F}(\mathcal{V})$ denotes the canonical
inclusion. Following \cite[5.2.14]{LodayVallette}, an operadic ideal of an operad $\mathcal P$ is a family
$I=(I(n))_{n\ge0}$ of $\mathbb S_n$-stable subspaces such that $\gamma(f;g_1,\ldots,g_n)\in I$
whenever at least one of the elements
$f,g_1,\ldots,g_n$ belongs to $I$.
\begin{definition}
Let $\mathcal{V}$ be an $\mathbb S$-module and let $\mathcal{R}\subseteq \mathcal{F}(\mathcal{V})$
be a subspace. The \emph{operadic ideal generated by $\mathcal{R}$}, denoted by $(\mathcal{R})$, is the smallest operadic ideal of $\mathcal{F}(\mathcal{V})$ containing $\mathcal{V}$.
The quotient operad $\mathcal{P}=\mathcal{F}(\mathcal{V})/(\mathcal{R})$
is called the symmetric operad generated by $\mathcal{V}$ subject to the relations $\mathcal{R}$ (see \cite[5.1]{Felicia}). If $\mathcal{V}$ is concentrated in arity $2$, i.e.\ $\mathcal{V}(n) = \emptyset$ iff $n \neq 1,2$, and  $\mathcal{R} \subseteq \mathcal{F}(\mathcal{V})(3)$, we say that $\mathcal{P}$ is a \textbf{quadratic operad} (see \cite[Definition 3.31]{MSS}).
\end{definition}
Using the universal property of the free operad, one can prove that
any symmetric operad $\mathcal{P}$ admits a presentation $\mathcal{P} \cong \mathcal{F}(\mathcal{V})/(\mathcal{R})$ (see e.g. \cite[Theorem 5.20]{Felicia}). Next, for any quadratic operad $\mathcal{P}=\mathcal{F}(\mathcal{V})/(\mathcal{R})$,
one can construct its \emph{quadratic dual operad} $\mathcal P^!
=
\mathcal F(\mathcal{V}^\vee)/(\mathcal{R}^\perp)$,
where $\mathcal{V}^\vee$ denotes the dual $\mathbb S$-module and
$\mathcal{R}^\perp$ is the annihilator of $\mathcal{R}$ under the natural pairing (see \cite[Definition 3.37]{MSS}).

\begin{definition}[{\cite[Definition 3.40]{MSS}}]
Let $\mathcal P$ be a quadratic operad and let $\mathcal P^!$ be its
quadratic dual. The operad $\mathcal P$ is said to be
\textbf{Koszul} if the canonical morphism from the operadic cobar
construction on the cooperad dual to $\mathcal P^!$ to $\mathcal P$
is a quasi-isomorphism.
\end{definition}
Informally, a Koszul operad is a quadratic operad whose algebraic
structure is completely governed by its quadratic relations, with no
hidden higher-order homological relations.
\begin{remark}
When a quadratic operad attached to some algebraic structure is Koszul, there exists a cohomology complex, the so-called Andr\'e-Quillen cohomology \cite{Milles}, controlling the first-order deformation theory of such structure. B. Vallette \cite{Vallette} showed that the algebraic operad $\mathcal{PL}$ of post-Lie algebras is Koszul (and, similarly, F. Chapoton and M. Livernet proved the analogue result for the operad of pre-Lie algebras, see \cite{CL}). A realization of this cohomology for post-Lie algebras can be found in \cite{LazST}.
\end{remark}
We now recall filtered distributive laws \cite{DG}. 
Assume $\Bbbk$ is a field of characteristic zero. Let $\circ$ denote the composition of $\mathbb{S}$-modules (see \cite[5.1.4]{LodayVallette}), $\circ_i$ denote the partial operadic composition (see \cite[5.3.4]{LodayVallette}), and, for two subcollections $\mathcal{U}_1, \mathcal{U}_2$ of an operad $\mathcal{P}$, let  $\mathcal{U}_1 \bullet \mathcal{U}_2$ denote the subcollection of $\mathcal{P}$ spanned by all elements $\varphi \circ_i \psi$, with $\varphi \in \mathcal{U}_1$, $\psi \in \mathcal{U}_2$.

\begin{definition}
\label{definition-filtered-law}
Let $\mathcal{A}= \mathcal{F}(\mathcal{V})/(\mathcal{R})$ and $\mathcal{B}= \mathcal{F}(\mathcal{W})/ (\mathcal{S})$ be two quadratic operads. Consider a pair of $\mathbb{S}$-modules maps
\begin{align}
&s: \mathcal{R} \to \mathcal{W} \bullet \mathcal{V} \oplus \mathcal{V} \bullet \mathcal{W} \oplus \mathcal{W} \bullet \mathcal{W} \label{eq:filtered-law-one}\\
&d:\mathcal{W} \bullet \mathcal{V} \to \mathcal{V} \bullet \mathcal{W} \oplus \mathcal{W} \bullet \mathcal{W}. \label{eq:filtered-law-two}
\end{align}
Set $\mathcal{G}=\{x - s(x), x \in \mathcal{R} \}$ and $\mathcal{D}=\{x - d(x), x \in \mathcal{W} \bullet \mathcal{V} \}$ and consider the quadratic operad $\mathcal{E} = \mathcal{F}(\mathcal{V} \oplus \mathcal{W})/(\mathcal{G} \oplus \mathcal{D} \oplus \mathcal{S})$. Consider the composition $\mathcal{F}(\mathcal{V}) \circ \mathcal{F}(\mathcal{W}) \hookrightarrow \mathcal{F}(\mathcal{V} \oplus \mathcal{W}) \twoheadrightarrow \mathcal{E}$
which descends to the surjective map of $\mathbb{S}$-modules $\xi: \mathcal{A} \circ \mathcal{B} \twoheadrightarrow \mathcal{E}$.
The pair $(s,d)$ is called a \textbf{filtered distributive law} between $\mathcal{A}$ and $\mathcal{B}$ if the restriction of $\xi$ to elements of arity $4$ (i.e.\ weight-$3$ elements) $\xi_3: 
(\mathcal{A}\circ\mathcal{B})(4) \twoheadrightarrow \mathcal{E}(4)$ is an isomorphism of $\mathbb{S}_{4}$-modules.
\end{definition}

The following result allows one to prove the Koszulity of the operad $\mathcal{E}$ through the one of $\mathcal{A}$ and $\mathcal{B}$:
\begin{theorem}[{\cite[Theorem 5.2]{DG}}]
\label{theorem-Koszulity}
Let $\mathcal{A}= \mathcal{F}(\mathcal{V})/(\mathcal{R})$ and $\mathcal{B}= \mathcal{F}(\mathcal{W})/ (\mathcal{S})$ be two Koszul operads and $(s,d)$ be a filtered distributive law between $\mathcal{A}$ and $\mathcal{B}$. Then the quadratic operad $\mathcal{E}$ built as above is Koszul, and $\mathcal{A} \circ \mathcal{B} \cong \mathcal{E}$ as $\mathbb{S}$-modules.
\end{theorem}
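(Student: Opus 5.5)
The plan is to deduce the statement from the classical (homogeneous) distributive law theorem of Markl, via a filtration argument in the spirit of the Poincar\'e--Birkhoff--Witt theorem of Braverman--Gaitsgory and Polishchuk--Positselski. The key input is a grading adapted to the shape of $s$ and $d$.

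\textbf{Step 1: the filtration and the associated graded operad.} On $\mathcal{F}(\mathcal{V}\oplus\mathcal{W})$ I put the grading by the number of vertices labelled by $\mathcal{V}$. Relative to this grading:
\begin{itemize}
\item $s$ sends an element of $\mathcal{R}$ (two $\mathcal{V}$-vertices) to terms with at most one $\mathcal{V}$-vertex;
\item $d$ sends $\mathcal{W}\bullet\mathcal{V}$ (one $\mathcal{V}$-vertex) to $\mathcal{V}\bullet\mathcal{W}$ (one $\mathcal{V}$-vertex) plus $\mathcal{W}\bullet\mathcal{W}$ (none).
\end{itemize}
Write $d=d_0+d_1$ according to this decomposition. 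The induced filtration on $\mathcal{E}$ has an associated graded $\mathrm{gr}\,\mathcal{E}$. This is a quotient of the homogeneous quadratic operad
\[
\mathcal{E}_0=\mathcal{F}(\mathcal{V}\oplus\mathcal{W})/(\mathcal{R}\oplus\mathcal{D}_0\oplus\mathcal{S}),
\qquad \mathcal{D}_0=\{x-d_0(x)\},
\]
since the leading terms of the relations $\mathcal{G},\mathcal{D},\mathcal{S}$ are exactly $\mathcal{R},\mathcal{D}_0,\mathcal{S}$. The operad $\mathcal{E}_0$ is the operad associated with the rewriting rule $d_0\colon\mathcal{W}\bullet\mathcal{V}\to\mathcal{V}\bullet\mathcal{W}$, so it comes with a surjection $\xi^0\colon\mathcal{A}\circ\mathcal{B}\twoheadrightarrow\mathcal{E}_0$.

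\textbf{Step 2: reduction to Markl's theorem in arity $4$.} There is a chain of dimension inequalities in arity $4$:
\[
\dim\mathcal{E}_0(4)\ \ge\ \dim\mathrm{gr}\,\mathcal{E}(4)\ =\ \dim\mathcal{E}(4)\ =\ \dim(\mathcal{A}\circ\mathcal{B})(4)\ \ge\ \dim\mathcal{E}_0(4).
\]
Here the first inequality holds because $\mathrm{gr}\,\mathcal{E}$ is a quotient of $\mathcal{E}_0$, the middle equality is the hypothesis that $\xi_3$ is an isomorphism, and the last inequality holds because $\xi^0$ is surjective. Hence $\xi^0_3$ is an isomorphism, and $\mathcal{E}_0(4)\to\mathrm{gr}\,\mathcal{E}(4)$ is an isomorphism too. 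So $d_0$ is a genuine distributive law in Markl's sense. Markl's theorem, which uses that $\mathcal{A}$ and $\mathcal{B}$ are Koszul, then gives that $\mathcal{E}_0$ is Koszul and that $\xi^0\colon\mathcal{A}\circ\mathcal{B}\to\mathcal{E}_0$ is an isomorphism of $\mathbb{S}$-modules in all arities.

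\textbf{Step 3: lifting from $\mathcal{E}_0$ to $\mathcal{E}$ (the main obstacle).} It remains to show that $\mathcal{E}$ is a flat deformation of $\mathcal{E}_0$, i.e.\ that $\mathcal{E}_0\to\mathrm{gr}\,\mathcal{E}$ is an isomorphism in every arity, not just arity $4$. The subtlety is that the inhomogeneous tails $s$ and $d_1$ could, through overlaps of higher arity, produce new leading terms in the ideal.

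The approach I would take first is the Koszul-complex spectral sequence argument. Consider the Koszul complex built from the quadratic data of $\mathcal{E}$: this is the complex $\mathcal{E}_0^{\text{!`}}\circ\mathcal{E}$ whose differential is corrected by $s$ and $d_1$. Filter it by the number of $\mathcal{V}$-vertices. Its first page is the Koszul complex of $\mathcal{E}_0$, which is acyclic by Step 2.

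For the corrected differential to square to zero, one needs certain compatibility conditions between $s$, $d_1$ and the relations. These are conditions on the weight-$3$ part of the quadratic dual, i.e.\ exactly on arity $4$, so they are supplied by the fact that $\mathcal{R}\oplus\mathcal{D}_0\oplus\mathcal{S}$ coincides with the leading-term space of the ideal of $\mathcal{E}$ in arity $4$ (Step 2). This is the Braverman--Gaitsgory ``Jacobi condition.''

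Convergence of the spectral sequence is guaranteed because the filtration is bounded in each arity. We then get:
\begin{itemize}
\item acyclicity of the Koszul complex of $\mathcal{E}$, hence Koszulity of $\mathcal{E}$;
\item $\mathrm{gr}\,\mathcal{E}\cong\mathcal{E}_0$ in all arities.
\end{itemize}
Combining the second point with the surjection $\xi$ and with $\mathcal{E}_0\cong\mathcal{A}\circ\mathcal{B}$, a dimension count in each arity shows that $\xi$ is an isomorphism. This yields $\mathcal{A}\circ\mathcal{B}\cong\mathcal{E}$ as $\mathbb{S}$-modules.

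If the spectral sequence bookkeeping becomes unwieldy, the fallback is an operadic Gr\"obner-basis / Diamond-lemma argument. With respect to an ordering that puts $\mathcal{V}$-vertices high, the arity-$4$ isomorphism says that all critical pairs resolve. Shuffle-operad Gr\"obner theory then propagates this to all arities, and it also gives Koszulity via a quadratic Gr\"obner basis.
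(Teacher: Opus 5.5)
The paper gives no proof of this statement; it imports it from Dotsenko--Griffin. So what follows assesses your argument on its own. Your Steps 1 and 2 are sound. Filtering by the number of $\mathcal{V}$-vertices, the leading terms of $\mathcal{G}\oplus\mathcal{D}\oplus\mathcal{S}$ are exactly $\mathcal{R}\oplus\mathcal{D}_0\oplus\mathcal{S}$, so $\mathcal{E}_0\twoheadrightarrow\mathrm{gr}\,\mathcal{E}$. Your arity-$4$ dimension sandwich plus the Diamond Lemma for distributive laws then gives $\mathcal{E}_0\cong\mathcal{A}\circ\mathcal{B}$, and $\mathcal{E}_0$ is Koszul.

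\textbf{The gap is Step 3.} Step 3 must establish $\dim\mathcal{E}(n)\ge\dim\mathcal{E}_0(n)$ for $n\ge 5$, and your argument for it is circular. The complex $\mathcal{E}_0^{\text{!`}}\circ\mathcal{E}$, filtered by $\mathcal{V}$-degree, has associated graded $\mathcal{E}_0^{\text{!`}}\circ\mathrm{gr}\,\mathcal{E}$, not $\mathcal{E}_0^{\text{!`}}\circ\mathcal{E}_0$. Saying its first page is the Koszul complex of $\mathcal{E}_0$ therefore already assumes $\mathrm{gr}\,\mathcal{E}\cong\mathcal{E}_0$, which is what you want to prove.

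The Braverman--Gaitsgory analogy is also misplaced. There the tails of the relations have lower weight, so they become a curved differential on a fixed Koszul dual coalgebra, and every consistency condition sits in weight $3$. Here $s$ and $d_1$ preserve the weight: they deform the quadratic relation space itself. In particular, the canonical map $\mathcal{E}_0^{\text{!`}}\to\mathcal{E}$ is not even a twisting morphism, since it does not kill $\mathcal{R}\oplus\mathcal{D}_0\oplus\mathcal{S}$. Any correction must therefore be constructed in every weight, and only the first step is governed by arity $4$. Your claim that all compatibility conditions live in arity $4$ is false as stated.

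\textbf{What is missing.} You need an operadic version of Drinfeld's theorem: a quadratic deformation of a Koszul operad that is flat in weight $3$ is flat in all weights. Two features are essential. Koszulness of $\mathcal{E}_0$ has to kill the higher obstructions, not merely compute a first page. And the complex you filter must not contain $\mathcal{E}$. One way to carry this out is to perturb the cobar construction: take $d=d_2+\delta$ on $\Omega(\mathcal{E}_0^{\text{!`}})$, with $\delta$ strictly lowering the $\mathcal{V}$-degree.
\begin{itemize}
\item On weight-$2$ generators, $\delta$ is dictated by $s$ and $d_1$, so that $H_0=\mathcal{E}$.
\item Let $c$ be a weight-$3$ generator of $\mathcal{V}$-degree $p$. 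Then $\delta(d_2c)=d(d_2c)$ lies in the ideal generated by $\mathcal{G}\oplus\mathcal{D}\oplus\mathcal{S}$ in arity $4$ and has $\mathcal{V}$-degree $<p$. You need it to equal $d$ of an element of $\mathcal{V}$-degree $<p$. This is exactly the statement $\mathrm{gr}\,\mathcal{E}(4)=\mathcal{E}_0(4)$, i.e.\ your Step 2.
\item Let $c$ be a generator of weight $w\ge 4$. The obstruction $d(d_2c)$ is a cycle in syzygy degree $w-3\ge 1$. It is a boundary, in lower filtration, because $(\Omega(\mathcal{E}_0^{\text{!`}}),d_2)$ is acyclic in positive syzygy degree, which is precisely the Koszulness of $\mathcal{E}_0$.
\end{itemize}
By the weight/syzygy bigrading, $\delta$ is automatically quadratic, so $(\Omega(\mathcal{E}_0^{\text{!`}}),d)=\Omega(C)$ for a cooperad $C$ concentrated on the diagonal. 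The $\mathcal{V}$-degree spectral sequence of $\Omega(C)$ collapses to $\mathcal{E}_0$ in syzygy degree $0$. This yields $\mathrm{gr}\,\mathcal{E}\cong\mathcal{E}_0$ and $\Omega(C)\simeq\mathcal{E}$, hence Koszulness of $\mathcal{E}$ and bijectivity of $\xi$.

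Your Gr\"obner fallback does not close the gap in general. It requires $\mathcal{A}$ and $\mathcal{B}$ to admit quadratic Gr\"obner bases for compatible orderings, which is strictly stronger than Koszulness. It would cover the paper's application $\mathcal{L}\circ\mathcal{M}_2$, but not the theorem as stated.
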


Another approach to proving the Koszulity of a quadratic operad relies on the Hoffbeck-Dotsenko-Khoroshkin criterion. In a nutshell, building on the work of V. Dotsenko and A. Khoroshkin \cite{DK}, one can apply an operadic analogue of Buchberger's algorithm \cite{Buch} to compute Gr\"obner bases for operads. Hoffbeck's PBW criterion \cite{Hoff} then implies that any quadratic operad admitting a quadratic Gr\"obner basis is Koszul. Such a technique has also been implemented in Haskell by V. Dotsenko and M. Vejdemo-Johansson \cite{DVJ}.

\subsection{The operad $\mathcal{IPL}$ and its Koszulity}
Let $\mathcal{V}$ be the $\mathbb S$-module concentrated in arity $2$ and defined by $\mathcal{V}(2)= \Bbbk \langle [\cdot,\cdot],\wt,\wt^{\mathrm{op}},\bt, \bt^{\mathrm{op}}\rangle$ and $\mathcal{V}(n)=0$ for $n\neq 1,2$, where the action of the transposition $(12)\in\mathbb S_2$ on $\mathcal{V}(2)$ is given by
\[
[\cdot,\cdot] \cdot (12)=-[\cdot,\cdot],
\quad
\wt \cdot (12)=\wt^{\mathrm{op}},
\quad
\bt \cdot (12)=\bt^{\mathrm{op}}.
\]
In other words, as an $\mathbb{S}_2$-module we have $\mathcal{V}(2) \cong \mathrm{sgn} \oplus \Bbbk[\mathbb{S}_2] \oplus \Bbbk[\mathbb{S}_2]$.
\begin{definition}
The operad $\mathcal{IPL}$ is defined as the quotient $\mathcal{IPL} = \mathcal{F}(\mathcal{V})/(\mathcal{R})$, where $\mathcal{V}$ is the $\mathbb{S}$-module as above, and $\mathcal{R}$ is the operadic ideal generated by the Jacobi identity and the relations \eqref{eq:postlie:1}, \eqref{eq:postlie:2}, \eqref{eq:infpostlie:1}, \eqref{eq:infpostlie:2}.
\end{definition}
From now on, we will simply regard $\mathcal{V}$ as being generated by $[\cdot,\cdot], \wt, \bt$, with the opposite operations $\wt^\mathrm{op}, \bt^{\mathrm{op}}$ understood as the images of $\wt,\bt$ under the action of the transposition $(12)$. By construction $\mathcal{IPL}$ is a quadratic operad. We describe $\mathcal{IPL}$ using trees and grafting trees operations as follows: the generators $[\cdot,\cdot],\wt,\bt$ are represented by the following $2$-corollas
\[
\small
\begin{tikzpicture}[scale=1]

\node[circle,draw,inner sep=2pt] (b) at (0,0) {${\scriptstyle[\cdot,\cdot]}$};
\draw (-0.6,0.8)--(b);
\draw (0.6,0.8)--(b);
\draw (b)--(0,-1);

\node[circle,draw,inner sep=2pt] (w) at (4,0) {$\wt$};
\draw (3.4,0.8)--(w);
\draw (4.6,0.8)--(w);
\draw (w)--(4,-1);

\node[circle,draw,inner sep=2pt] (k) at (8,0) {$\bt$};
\draw (7.4,0.8)--(k);
\draw (8.6,0.8)--(k);
\draw (k)--(8,-1);

\end{tikzpicture}
\]
whence the action of the transposition on the Lie bracket is represented by 
\[
\small
\begin{tikzpicture}[scale=1]

\node[circle,draw,inner sep=2pt] (b) at (0,0) {${\scriptstyle[\cdot,\cdot]}$};
\draw (-0.6,0.8)--(b);
\draw (0.6,0.8)--(b);
\draw (b)--(0,-1);
\node at (-0.6,1) {$2$};
\node at (0.6,1) {$1$};
\node at (1,0) {$=$};
\node at (1.8,0) {$-$};
\begin{scope}[shift={(2.5,0)}]
\node[circle,draw,inner sep=2pt] (b) at (0,0) {${\scriptstyle[\cdot,\cdot]}$};
\draw (-0.6,0.8)--(b);
\draw (0.6,0.8)--(b);
\draw (b)--(0,-1);
\node at (-0.6,1) {$1$};
\node at (0.6,1) {$2$};
\end{scope}

\end{tikzpicture}
\]
The Jacobi relation is thus represented by 
\[
\small
\begin{tikzpicture}[scale=1]

\node[circle,draw,inner sep=2pt] (b1) at (0,0) {$\scriptstyle[\cdot,\cdot]$};
\draw (b1)--(0,-0.8);

\node[circle,draw,inner sep=2pt] (a1) at (-0.7,0.9) {$\scriptstyle[\cdot,\cdot]$};
\draw (a1)--(b1);
\draw (-1.3,1.7)--(a1);
\draw (-0.1,1.7)--(a1);

\draw (0.7,0.9)--(b1);

\node at (-1.3,1.9) {$1$};
\node at (-0.1,1.9) {$2$};
\node at (0.7,1.1) {$3$};

\node at (1.6,0.5) {$+$};

\begin{scope}[shift={(3.2,0)}]

\node[circle,draw,inner sep=2pt] (b2) at (0,0) {$\scriptstyle[\cdot,\cdot]$};
\draw (b2)--(0,-0.8);

\node[circle,draw,inner sep=2pt] (a2) at (-0.7,0.9) {$\scriptstyle[\cdot,\cdot]$};
\draw (a2)--(b2);
\draw (-1.3,1.7)--(a2);
\draw (-0.1,1.7)--(a2);

\draw (0.7,0.9)--(b2);

\node at (-1.3,1.9) {$2$};
\node at (-0.1,1.9) {$3$};
\node at (0.7,1.1) {$1$};

\end{scope}

\node at (4.8,0.5) {$+$};

\begin{scope}[shift={(6.4,0)}]

\node[circle,draw,inner sep=2pt] (b3) at (0,0) {$\scriptstyle[\cdot,\cdot]$};
\draw (b3)--(0,-0.8);

\node[circle,draw,inner sep=2pt] (a3) at (-0.7,0.9) {$\scriptstyle[\cdot,\cdot]$};
\draw (a3)--(b3);
\draw (-1.3,1.7)--(a3);
\draw (-0.1,1.7)--(a3);

\draw (0.7,0.9)--(b3);

\node at (-1.3,1.9) {$3$};
\node at (-0.1,1.9) {$1$};
\node at (0.7,1.1) {$2$};

\end{scope}

\node at (8.1,0.5) {$=0$};

\end{tikzpicture}
\]
whence the post-Lie relations \eqref{eq:postlie:1} and \eqref{eq:postlie:2} are respectively described by
\[
\footnotesize
\begin{tikzpicture}[scale=1]

\node[circle,draw,inner sep=2pt] (w) at (0,0) {$\wt$};

\node[circle,draw,inner sep=2pt] (b) at (0.8,1) {$\scriptstyle[\cdot,\cdot]$};

\draw (w) -- (0,-1);
\draw (-0.8,1) -- (w);
\draw (b) -- (w);
\draw (0.2,2) -- (b);
\draw (1.4,2) -- (b);

\node at (-0.8,1.2) {$1$};
\node at (0.2,2.2) {$2$};
\node at (1.4,2.2) {$3$};

\node at (2.5,0.5) {$=$};

\begin{scope}[shift={(4,0)}]

\node[circle,draw,inner sep=2pt] (b1) at (0,0) {$\scriptstyle[\cdot,\cdot]$};

\node[circle,draw,inner sep=2pt] (w1) at (-0.8,1) {$\wt$};

\draw (b1) -- (0,-1);
\draw (w1) -- (b1);
\draw (0.8,1) -- (b1);

\draw (-1.4,2) -- (w1);
\draw (-0.2,2) -- (w1);

\node at (-1.4,2.2) {$1$};
\node at (-0.2,2.2) {$2$};
\node at (0.8,1.2) {$3$};

\end{scope}

\node at (6.2,0.5) {$+$};

\begin{scope}[shift={(8,0)}]

\node[circle,draw,inner sep=2pt] (b2) at (0,0) {$\scriptstyle[\cdot,\cdot]$};

\node[circle,draw,inner sep=2pt] (w2) at (0.8,1) {$\wt$};

\draw (b2) -- (0,-1);
\draw (-0.8,1) -- (b2);
\draw (w2) -- (b2);

\draw (0.2,2) -- (w2);
\draw (1.4,2) -- (w2);

\node at (-0.8,1.2) {$2$};
\node at (0.2,2.2) {$1$};
\node at (1.4,2.2) {$3$};

\end{scope}

\end{tikzpicture}
\]
and 
\[
\small
\begin{tikzpicture}[scale=0.8]

\node[circle,draw,inner sep=2pt] (w) at (0,0) {$\wt$};

\node[circle,draw,inner sep=2pt] (b) at (-0.8,1)
{$\scriptstyle[\cdot,\cdot]$};

\draw (w)--(0,-1);

\draw (b)--(w);
\draw (0.8,1)--(w);

\draw (-1.4,2)--(b);
\draw (-0.2,2)--(b);

\node at (-1.4,2.2) {$1$};
\node at (-0.2,2.2) {$2$};
\node at (0.8,1.2) {$3$};

\node at (2,0.5) {$=$};

\begin{scope}[shift={(3.5,0)}]

\node[circle,draw,inner sep=2pt] (w1) at (0,0) {$\wt$};

\node[circle,draw,inner sep=2pt] (w11) at (0.8,1) {$\wt$};

\draw (w1)--(0,-1);

\draw (-0.8,1)--(w1);
\draw (w11)--(w1);

\draw (0.2,2)--(w11);
\draw (1.4,2)--(w11);

\node at (-0.8,1.2) {$1$};
\node at (0.2,2.2) {$2$};
\node at (1.4,2.2) {$3$};

\end{scope}

\node at (5.5,0.5) {$-$};

\begin{scope}[shift={(7,0)}]

\node[circle,draw,inner sep=2pt] (w2) at (0,0) {$\wt$};

\node[circle,draw,inner sep=2pt] (w22) at (0.8,1) {$\wt$};

\draw (w2)--(0,-1);

\draw (-0.8,1)--(w2);
\draw (w22)--(w2);

\draw (0.2,2)--(w22);
\draw (1.4,2)--(w22);

\node at (-0.8,1.2) {$2$};
\node at (0.2,2.2) {$1$};
\node at (1.4,2.2) {$3$};

\end{scope}

\node at (9,0.5) {$-$};

\begin{scope}[shift={(11,0)}]

\node[circle,draw,inner sep=2pt] (w3) at (0,0) {$\wt$};

\node[circle,draw,inner sep=2pt] (w33) at (-0.8,1) {$\wt$};

\draw (w3)--(0,-1);

\draw (w33)--(w3);
\draw (0.8,1)--(w3);

\draw (-1.4,2)--(w33);
\draw (-0.2,2)--(w33);

\node at (-1.4,2.2) {$1$};
\node at (-0.2,2.2) {$2$};
\node at (0.8,1.2) {$3$};

\end{scope}

\node at (12.5,0.5) {$+$};

\begin{scope}[shift={(14.5,0)}]

\node[circle,draw,inner sep=2pt] (w4) at (0,0) {$\wt$};

\node[circle,draw,inner sep=2pt] (w44) at (-0.8,1) {$\wt$};

\draw (w4)--(0,-1);

\draw (w44)--(w4);
\draw (0.8,1)--(w4);

\draw (-1.4,2)--(w44);
\draw (-0.2,2)--(w44);

\node at (-1.4,2.2) {$2$};
\node at (-0.2,2.2) {$1$};
\node at (0.8,1.2) {$3$};

\end{scope}

\end{tikzpicture}
\]
Finally, relations \eqref{eq:infpostlie:1} and \eqref{eq:infpostlie:2} are respectively represented by 
\[
\footnotesize
\begin{tikzpicture}[scale=1]

\node[circle,draw,inner sep=2pt] (w) at (0,0) {$\bt$};

\node[circle,draw,inner sep=2pt] (b) at (0.8,1) {$\scriptstyle[\cdot,\cdot]$};

\draw (w) -- (0,-1);
\draw (-0.8,1) -- (w);
\draw (b) -- (w);
\draw (0.2,2) -- (b);
\draw (1.4,2) -- (b);

\node at (-0.8,1.2) {$1$};
\node at (0.2,2.2) {$2$};
\node at (1.4,2.2) {$3$};

\node at (2.5,0.5) {$=$};

\begin{scope}[shift={(4,0)}]

\node[circle,draw,inner sep=2pt] (b1) at (0,0) {$\scriptstyle[\cdot,\cdot]$};

\node[circle,draw,inner sep=2pt] (w1) at (-0.8,1) {$\bt$};

\draw (b1) -- (0,-1);
\draw (w1) -- (b1);
\draw (0.8,1) -- (b1);

\draw (-1.4,2) -- (w1);
\draw (-0.2,2) -- (w1);

\node at (-1.4,2.2) {$1$};
\node at (-0.2,2.2) {$2$};
\node at (0.8,1.2) {$3$};

\end{scope}

\node at (6.2,0.5) {$+$};

\begin{scope}[shift={(8,0)}]

\node[circle,draw,inner sep=2pt] (b2) at (0,0) {$\scriptstyle[\cdot,\cdot]$};

\node[circle,draw,inner sep=2pt] (w2) at (0.8,1) {$\bt$};

\draw (b2) -- (0,-1);
\draw (-0.8,1) -- (b2);
\draw (w2) -- (b2);

\draw (0.2,2) -- (w2);
\draw (1.4,2) -- (w2);

\node at (-0.8,1.2) {$2$};
\node at (0.2,2.2) {$1$};
\node at (1.4,2.2) {$3$};

\end{scope}

\end{tikzpicture}
\]
and
\[
\begin{tikzpicture}[scale=0.8]

\node[circle,draw,inner sep=2pt] (bt0) at (0,0) {$\bt$};
\node[circle,draw,inner sep=2pt] (br) at (-0.8,1)
{$\scriptstyle[\cdot,\cdot]$};

\draw (bt0)--(0,-1);
\draw (br)--(bt0);
\draw (0.8,1)--(bt0);

\draw (-1.4,2)--(br);
\draw (-0.2,2)--(br);

\node at (-1.4,2.2) {$1$};
\node at (-0.2,2.2) {$2$};
\node at (0.8,1.2) {$3$};

\node at (2,0.5) {$=$};

\begin{scope}[shift={(3.5,0)}]

\node[circle,draw,inner sep=2pt] (w1) at (0,0) {$\wt$};
\node[circle,draw,inner sep=2pt] (b1) at (0.8,1) {$\bt$};

\draw (w1)--(0,-1);
\draw (-0.8,1)--(w1);
\draw (b1)--(w1);
\draw (0.2,2)--(b1);
\draw (1.4,2)--(b1);

\node at (-0.8,1.2) {$1$};
\node at (0.2,2.2) {$2$};
\node at (1.4,2.2) {$3$};

\end{scope}

\node at (5.5,0.5) {$-$};

\begin{scope}[shift={(7,0)}]

\node[circle,draw,inner sep=2pt] (w2) at (0,0) {$\wt$};
\node[circle,draw,inner sep=2pt] (b2) at (0.8,1) {$\bt$};

\draw (w2)--(0,-1);
\draw (-0.8,1)--(w2);
\draw (b2)--(w2);
\draw (0.2,2)--(b2);
\draw (1.4,2)--(b2);

\node at (-0.8,1.2) {$2$};
\node at (0.2,2.2) {$1$};
\node at (1.4,2.2) {$3$};

\end{scope}

\node at (9,0.5) {$-$};

\begin{scope}[shift={(11,0)}]

\node[circle,draw,inner sep=2pt] (b3) at (0,0) {$\bt$};
\node[circle,draw,inner sep=2pt] (w3) at (-0.8,1) {$\wt$};

\draw (b3)--(0,-1);
\draw (w3)--(b3);
\draw (0.8,1)--(b3);
\draw (-1.4,2)--(w3);
\draw (-0.2,2)--(w3);

\node at (-1.4,2.2) {$1$};
\node at (-0.2,2.2) {$2$};
\node at (0.8,1.2) {$3$};

\end{scope}

\node at (12.5,0.5) {$+$};

\begin{scope}[shift={(14.5,0)}]

\node[circle,draw,inner sep=2pt] (b4) at (0,0) {$\bt$};
\node[circle,draw,inner sep=2pt] (w4) at (-0.8,1) {$\wt$};

\draw (b4)--(0,-1);
\draw (w4)--(b4);
\draw (0.8,1)--(b4);
\draw (-1.4,2)--(w4);
\draw (-0.2,2)--(w4);

\node at (-1.4,2.2) {$2$};
\node at (-0.2,2.2) {$1$};
\node at (0.8,1.2) {$3$};

\end{scope}

\begin{scope}[shift={(0,-4.2)}] 
\node at (2,0.5) {$+$};

\begin{scope}[shift={(3.5,0)}]

\node[circle,draw,inner sep=2pt] (b1) at (0,0) {$\bt$};
\node[circle,draw,inner sep=2pt] (w1) at (0.8,1) {$\wt$};

\draw (b1)--(0,-1);
\draw (-0.8,1)--(b1);
\draw (w1)--(b1);
\draw (0.2,2)--(w1);
\draw (1.4,2)--(w1);

\node at (-0.8,1.2) {$1$};
\node at (0.2,2.2) {$2$};
\node at (1.4,2.2) {$3$};

\end{scope}

\node at (5.5,0.5) {$-$};

\begin{scope}[shift={(7,0)}]

\node[circle,draw,inner sep=2pt] (b2) at (0,0) {$\bt$};
\node[circle,draw,inner sep=2pt] (w2) at (0.8,1) {$\wt$};

\draw (b2)--(0,-1);
\draw (-0.8,1)--(b2);
\draw (w2)--(b2);
\draw (0.2,2)--(w2);
\draw (1.4,2)--(w2);

\node at (-0.8,1.2) {$2$};
\node at (0.2,2.2) {$1$};
\node at (1.4,2.2) {$3$};

\end{scope}

\node at (9,0.5) {$-$};

\begin{scope}[shift={(11,0)}]

\node[circle,draw,inner sep=2pt] (w3) at (0,0) {$\wt$};
\node[circle,draw,inner sep=2pt] (b3) at (-0.8,1) {$\bt$};

\draw (w3)--(0,-1);
\draw (b3)--(w3);
\draw (0.8,1)--(w3);
\draw (-1.4,2)--(b3);
\draw (-0.2,2)--(b3);

\node at (-1.4,2.2) {$1$};
\node at (-0.2,2.2) {$2$};
\node at (0.8,1.2) {$3$};

\end{scope}

\node at (12.5,0.5) {$+$};

\begin{scope}[shift={(14.5,0)}]

\node[circle,draw,inner sep=2pt] (w4) at (0,0) {$\wt$};
\node[circle,draw,inner sep=2pt] (b4) at (-0.8,1) {$\bt$};

\draw (w4)--(0,-1);
\draw (b4)--(w4);
\draw (0.8,1)--(w4);
\draw (-1.4,2)--(b4);
\draw (-0.2,2)--(b4);

\node at (-1.4,2.2) {$2$};
\node at (-0.2,2.2) {$1$};
\node at (0.8,1.2) {$3$};

\end{scope}

\end{scope}

\end{tikzpicture}
\]

\begin{theorem}\label{thm:IPL}
The operad $\mathcal{IPL}$ is Koszul.
\end{theorem}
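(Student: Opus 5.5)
We will apply Theorem \ref{theorem-Koszulity} (the filtered distributive law criterion of \cite{DG}) with $\mathcal{A}=\mathscr{L}$, the operad of Lie algebras generated by $\mathcal{V}_{\mathscr L}=\Bbbk\langle[\cdot,\cdot]\rangle$ with the Jacobi relation, and $\mathcal{B}=\mathscr{M}_2$, the operad of bi-magmas. The latter is generated by $\mathcal{W}=\Bbbk\langle\wt,\wt^{\mathrm{op}},\bt,\bt^{\mathrm{op}}\rangle\cong\Bbbk[\mathbb S_2]^{\oplus 2}$ with no relations, so $\mathcal{S}=0$. Both are Koszul: $\mathscr{L}$ classically, and $\mathscr{M}_2$ because it is free (a free operad is Koszul, its Koszul dual having trivial quadratic part). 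We take $s=0$ on the Jacobi relation, which involves only $\mathcal V\bullet\mathcal V$. The map $d\colon\mathcal W\bullet\mathcal V\to\mathcal V\bullet\mathcal W\oplus\mathcal W\bullet\mathcal W$ is read off from the relations. Relations \eqref{eq:postlie:1} and \eqref{eq:infpostlie:1} rewrite $x\wt[y,z]$ and $x\bt[y,z]$, the trees with $[\cdot,\cdot]$ grafted on the second input of $\wt$ or $\bt$, as elements of $\mathcal V\bullet\mathcal W$. Relations \eqref{eq:postlie:2} and \eqref{eq:infpostlie:2} rewrite $[x,y]\wt z$ and $[x,y]\bt z$, with $[\cdot,\cdot]$ grafted on the first input, as elements of $\mathcal W\bullet\mathcal W$. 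Together with their $\mathbb S_3$-translates, these four families span $\mathcal W\bullet\mathcal V$ exactly once. A dimension count confirms this: $\mathcal W\bullet\mathcal V(3)$ has dimension $4\cdot 3\cdot 1\cdot 2/2\cdot$(appropriate factor) $=12$, and the relations give $3+3+3+3=12$ independent rewritings. So $d$ is a well-defined $\mathbb S$-module map, and $\mathcal{E}=\mathcal{IPL}$.

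It remains to check the defining condition of Definition \ref{definition-filtered-law}: that $\xi_3\colon(\mathscr L\circ\mathscr M_2)(4)\to\mathcal{IPL}(4)$ is an isomorphism. Surjectivity is automatic, since every tree can be rewritten, using $d$, so that all brackets lie below all magmatic operations. So it suffices to show $\dim\mathcal{IPL}(4)\ge\dim(\mathscr L\circ\mathscr M_2)(4)$. Equivalently, no new relation in $\mathcal{V}\bullet\mathcal W\bullet\mathcal W$, $\mathcal V\bullet\mathcal V\bullet\mathcal W$, etc. is created when the ambiguous arity-$4$ trees are rewritten in two different orders. The critical monomials are the trees of weight $3$ containing two overlapping rewritable patterns, such as $[x,y]\wt[z,w]$, $[[x,y],z]\bt w$, $x\wt(y\bt[z,w])$ and $[x,y]\wt(z\bt w)$. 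For each we must check that both reduction paths give the same normal form. Conceptually this is a confluence (diamond-lemma) check, equivalent to a quadratic Gröbner basis argument with an ordering that puts $[\cdot,\cdot]$ above $\wt,\bt$.

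To organize this check, we will avoid brute force and use the deformation interpretation. Work over $R=\Bbbk[\hbar]/(\hbar^2)$, with $\widetilde\wt=\wt+\hbar\bt$. The relations of $\mathcal{IPL}$ are exactly the $\hbar^0$ and $\hbar^1$ components of the post-Lie relations for $\widetilde\wt$. Vallette's distributive law for $\mathcal{PL}\cong\mathscr L\circ\mathrm{Mag}$ \cite{Vallette} already guarantees confluence of every critical monomial involving only $\wt$. The mixed critical monomials are the linear-in-$\hbar$ parts of the same confluence identities for $\widetilde\wt$, obtained by differentiating: the confluence identities hold polynomially in the coefficients of $\widetilde\wt$, and the $\hbar$-linear part is a polarization. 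A cleaner alternative is to compare dimensions via free algebras. The free $\mathcal{IPL}$-algebra on a set $X$ should be the free Lie algebra on the free bi-magma on $X$. Its bracket is free, $\wt,\bt$ are extended to brackets as derivations by \eqref{eq:postlie:1} and \eqref{eq:infpostlie:1}, and bracket inputs on the left are defined by \eqref{eq:postlie:2} and \eqref{eq:infpostlie:2}, exactly as in the construction of free post-Lie algebras. Exhibiting this model in arity $4$ (multilinear part on $4$ generators) gives the required lower bound on $\dim\mathcal{IPL}(4)$.

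The main obstacle is this arity-$4$ verification, namely that the operations so defined on $\mathrm{Lie}(\mathrm{Mag}_2(X))$ actually satisfy \eqref{eq:infpostlie:2} with a bracket in the first slot, i.e.\ the mixed critical pairs. I would first try the polarization argument: treat $\widetilde\wt$ as a single post-Lie product over $R$, and invoke that the free post-Lie algebra over $R$ on the magma generated by $\wt$ and $\bt$ exists. If that proves delicate, I would fall back on explicit confluence of the finitely many critical trees, grouped by $\mathbb S_4$-orbits. Once $\xi_3$ is an isomorphism, Theorem \ref{theorem-Koszulity} gives Koszulity of $\mathcal{IPL}$ and $\mathcal{IPL}\cong\mathscr L\circ\mathscr M_2$.
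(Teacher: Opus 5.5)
Your setup is the paper's: the same filtered distributive law between $\mathscr L$ and $\mathscr M_2$, with $s=0$ and $d$ read off from \eqref{eq:postlie:1}--\eqref{eq:infpostlie:2}, reducing everything to confluence of the arity-$4$ critical monomials. The difference is in that last step. The paper verifies $[x,y]\bt[z,w]$ and $[[x,y],z]\bt w$ by explicit computation, and dismisses $x\bt[[y,z],w]$ as in the post-Lie case. You instead derive all $\bt$-confluences from the post-Lie ones of \cite[6.1]{DG} by polarization along $\wt\mapsto\wt+\hbar\bt$. This works, and it is exactly what the paper's computation amounts to: for instance, $\bigl[[D_x,D_y],E_z\bigr]+\bigl[[D_x,E_y],D_z\bigr]+\bigl[[E_x,D_y],D_z\bigr]$ is the $\hbar$-coefficient of $\bigl[[D_x,D_y],D_z\bigr]$ under $D\mapsto D+\hbar E$. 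To make it a proof, state the two facts it rests on.

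First, every weight-$3$ critical monomial involving a magmatic operation has exactly one magmatic vertex: it is a magmatic root with two bracket children, or a magmatic root over a bracket over a bracket. So the ones involving $\bt$ are precisely the $\hbar$-coefficients of the images of $[x,y]\wt[z,w]$, $[[x,y],z]\wt w$ and $x\wt[[y,z],w]$ under the operad morphism $\mathcal F([\cdot,\cdot],\wt)\to\mathcal F([\cdot,\cdot],\wt,\bt)\otimes\Bbbk[\hbar]/(\hbar^2)$.

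Second, this morphism fixes Jacobi. Moreover, the $\hbar$-coefficient of the image of a $\mathrm{PL}_i$-step applied to a tree $T$ is the sum, over magmatic vertices $v$ of $T$, of the corresponding step applied to $T$ with $v$ turned into $\bt$. That step is a $\mathrm{PL}_i$-step if $v$ lies outside the rewritten pattern, and an $\mathrm{IPL}_i$-step if $v$ is the pattern's magmatic vertex. Hence $\hbar$-coefficients of post-Lie reduction paths are $\mathcal{IPL}$ reduction paths, and equal normal forms in $(\mathscr L\circ\mathrm{Mag})(4)$ have equal $\hbar$-coefficients in $(\mathscr L\circ\mathscr M_2)(4)\otimes\Bbbk[\hbar]/(\hbar^2)$.

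The paper's route is self-contained. Yours is shorter and explains why the long computation closes.

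Two corrections. First, $x\wt(y\bt[z,w])$ and $[x,y]\wt(z\bt w)$ in your list are not critical, since each contains a single rewritable pattern. This is fortunate: they carry two magmatic vertices, are not $\hbar$-coefficients of post-Lie critical monomials, and so polarization would not reach them. Second, discard the variant via ``the free post-Lie algebra over $R$''. That algebra is $\mathrm{Lie}(\mathrm{Mag}(X))\otimes R$ with the undeformed product, so it contains no $\bt$ at all. Likewise the free-model alternative is no shortcut, because checking that $\mathrm{Lie}(\mathrm{Mag}_2(X))$ satisfies \eqref{eq:infpostlie:2} is the same confluence computation.
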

\begin{proof}
We shall use Theorem \ref{theorem-Koszulity} and follow the same line of thought used in \cite[6.1]{DG} for the proof of the Koszulity of $\mathcal{PL}$. Consider the operad $\mathcal{L}$ of Lie algebras, and the operad $\mathcal{M}_2 := \mathcal{F}(\wt,\bt)$. It is clear that the latter is Koszul, while it is well-known that $\mathcal{L}$ is Koszul, see e.g.\ \cite[13.2.6]{LodayVallette}. Set $\mathcal{V}=\Bbbk\langle [\cdot,\cdot]\rangle$, $\mathcal{W}=\Bbbk\langle \wt,\bt \rangle$, and let $\mathcal{R}=\{ \mathrm{J}\}$, where $\mathrm{J}$ denotes the Jacobi identity. Consider the $\mathbb{S}$-modules maps $s,d$ of the form \eqref{eq:filtered-law-one}-\eqref{eq:filtered-law-two} given by $s=0$ and 
\begin{align*}
d(\wt \circ_2 [\cdot,\cdot] ) &=  [\cdot,\cdot]\circ_1 \wt + ([\cdot,\cdot]\circ_2\wt ) \cdot(12)\\
d(\wt \circ_1 [\cdot,\cdot]  ) &= (\wt \circ_2 \wt) \cdot (\mathrm{id}_{\mathbb{S}_3} - (12)) + (\wt \circ_1 \wt) \cdot ((12) -\mathrm{id}_{\mathbb{S}_3} ) \\
d(\bt \circ_2 [\cdot,\cdot] ) &=  [\cdot,\cdot]\circ_1 \bt + ([\cdot,\cdot]\circ_2\bt ) \cdot(12)\\
d(\bt\circ_1[\cdot,\cdot]  ) &= (\wt \circ_2 \bt + \bt \circ_2 \wt) \cdot (\mathrm{id}_{\mathbb{S}_3} - (12)) + (\wt \circ_1 \bt + \bt \circ_1 \wt) \cdot ((12) -\mathrm{id}_{\mathbb{S}_3} )
\end{align*}
which we can write in a more algebraic fashion as
\begin{align*}
d(x \wt [y,z]) &= [x \wt y,z] + [y,x \wt z] \\
d([x,y] \wt z )&= x \wt (y \wt z) - y \wt (x \wt z) - (x \wt y) \wt z + (y \wt x) \wt z \\
d(x \bt [y,z]) &= [x \bt y,z] + [y,x \bt z] \\
d([x,y] \bt z) &= \R(x,y,z) - \R(y,x,z) - \L(x,y,z) + \L(y,x,z).
\end{align*}
Employing the same notations of Definition \ref{definition-filtered-law}, we thus have $\mathcal{D} = \{ \mathrm{PL}_1, \mathrm{PL}_2, \mathrm{IPL}_1, \mathrm{IPL}_2\}$,  $\mathcal{G}= \mathcal{R} = \{ \mathrm{J}\}$, and $\mathcal{S}= \emptyset$, where $\mathrm{PL}_1, \mathrm{PL}_2, \mathrm{IPL}_1, \mathrm{IPL}_2$ denotes respectively the relations \eqref{eq:postlie:1}, \eqref{eq:postlie:2}, \eqref{eq:infpostlie:1}, \eqref{eq:infpostlie:2}. Therefore, we have that $\mathcal{E} \cong \mathcal{IPL}$. If the corresponding map $\xi_3$ is an isomorphism, then the pair $(s,d)$ defines a filtered distributive law; consequently, we can apply Theorem \ref{theorem-Koszulity} to conclude that $\mathcal{IPL}$ is Koszul, and, in particular, $\mathcal{IPL} \cong \mathcal{L} \circ \mathcal{M}_2$ as $\mathbb{S}$-modules.
To verify that the surjective map $\xi_{3}$ is injective, it remains to check that the following critical pairs\footnote{The term \emph{critical pair} is borrowed from the theory of Gr\"obner basis for quadratic operads. Since the computations that follow are identical to those required to show Koszulity using the Hoffbeck-Dotsenko-Khoroshkin approach, we adopt the same terminology.} are confluent:
\[
[x,y]\bt[z,w],\qquad [[x,y],z]\bt w,\qquad x\bt[[y,z],w]
\]
where we again used an algebraic representation of the corresponding operadic elements.
We observe that the third one follows as in the case of post-Lie algebras, thus, we only have to check the first two. 
We first check the critical pair $[x,y]\bt[z,w]$. The first path (using first the rule \eqref{eq:infpostlie:2}) gives
\begingroup
\allowdisplaybreaks
\begin{align*}
[x,y] \bt [z,w] &= \R(x,y,[z,w]) - \R(y,x,[z,w]) - \L(x,y,[x,z]) + \L(y,x,[z,w]) \\
&= x \wt (y \bt [z,w]) + x \bt(y \wt [z,w]) - y \wt (x \bt [z,w]) - y \bt(x \wt [z,w]) \\
& - (x \wt y) \bt[z,w] - (x \bt y) \wt[z,w] + (y \wt x) \bt[z,w] + (y \bt x) \wt[z,w] \\
&= x \wt [y \bt z,w] + x \wt [z, y \bt w] + x \bt [y \wt z,w] + x \bt [z, y \wt w] \\
& - y \wt [x \bt z,w] - y \wt [z, x \bt w] - y \bt [x \wt z,w] - y \bt [z, x \wt w]\\
&- [(x \wt y)\bt z,w] -[z,(x \wt y)\bt w] - [(x \bt y)\wt z,w] -[z,(x \bt y)\wt w] \\
&+  [(y \wt x)\bt z,w] +[z,(y \wt x)\bt w] + [(y \bt x)\wt z,w] +[z,(y \bt x)\wt w]\\
&=[x \wt (y \bt z),w] + [y \bt z, x \wt w] + [x \wt z, y \bt w] + [z, x \wt (y \bt w)]\\
& + [x \bt (y \wt z),w] + [y \wt z, x \bt w] + [x \bt z, y \wt w] + [z, x \bt (y \wt w)]\\
& - [y \wt (x \bt z),w] - [x \bt z,y \wt w ] - [y \wt z , x \bt w] - [z, y \wt (x \bt w)]\\
&- [y \bt (x \wt z),w] - [x \wt z,y \bt w ] - [y \bt z , x \wt w] - [z, y \bt (x \wt w)]\\
&- [(x \wt y)\bt z,w] -[z,(x \wt y)\bt w] - [(x \bt y)\wt z,w] -[z,(x \bt y)\wt w] \\
&+  [(y \wt x)\bt z,w] +[z,(y \wt x)\bt w] + [(y \bt x)\wt z,w] +[z,(y \bt x)\wt w] \\
&= x \wt [y \bt z,w] + x \wt [z, y \bt w] + x \bt [y \wt z,w] + x \bt [z, y \wt w] \\
& - y \wt [x \bt z,w] - y \wt [z, x \bt w] - y \bt [x \wt z,w] - y \bt [z, x \wt w]\\
&- [(x \wt y)\bt z,w] -[z,(x \wt y)\bt w] - [(x \bt y)\wt z,w] -[z,(x \bt y)\wt w] \\
&+  [(y \wt x)\bt z,w] +[z,(y \wt x)\bt w] + [(y \bt x)\wt z,w] +[z,(y \bt x)\wt w]\\
&=[x \wt (y \bt z),w] + [z, x \wt (y \bt w)] + [x \bt (y \wt z),w]  + [z, x \bt (y \wt w)]\\
& - [y \wt (x \bt z),w] - [z, y \wt (x \bt w)]- [y \bt (x \wt z),w]  - [z, y \bt (x \wt w)]\\
&- [(x \wt y)\bt z,w] -[z,(x \wt y)\bt w] - [(x \bt y)\wt z,w] -[z,(x \bt y)\wt w] \\
&+  [(y \wt x)\bt z,w] +[z,(y \wt x)\bt w] + [(y \bt x)\wt z,w] +[z,(y \bt x)\wt w] .
\end{align*}
\endgroup
The second path (using first the rule \eqref{eq:infpostlie:1}) gives
\begingroup
\allowdisplaybreaks
\begin{align*}
[x,y] \bt [z,w] &= [[x,y] \bt z, w] + [z, [x,y] \bt w] \\
&= [\R(x,y,z),w] - [\R(y,x,z),w] - [\L(x,y,z),w] + [\L(y,x,z),w] \\
& + [z,\R(x,y,w)] - [z,\R(y,x,w)] - [z, \L(x,y,w)] + [z,\L(y,x,w)]\\
&= [x \wt (y \bt z),w] + [x \bt (y \wt z),w] - [y \wt (x \bt z),w] - [y \bt (x \wt z),w]\\
&- [(x \wt y) \bt z,w] - [(x \bt y)\wt z,w] + [(y \wt x) \bt z,w] + [(y \bt x)\wt z,w]\\
& + [z, x \wt (y \bt w)] +  [z, x \bt (y \wt w)] - [z, y \wt (x \bt w)] - [z, y \bt (x \wt w)]\\
& - [z, (x \wt y) \bt w] - [z, (x \bt y) \wt w] + [z, (y \wt x) \bt w] + [z, (y \bt x) \wt w].
\end{align*}
A careful but easy check shows that both paths give the same expansion.
\endgroup

\medskip\medskip
Next, we check the critical pair $[[x,y],z]\bt w$, which turns out to be the most intricate case. We will use \eqref{eq:postlie:1},\eqref{eq:postlie:2},\eqref{eq:infpostlie:1} and \eqref{eq:infpostlie:2}, rewritten by using the operators $D_{x}:=x\wt(-)$ and $E_{x}:=x\bt(-)$, and by employing notations $\ab{x}{y} := x\wt y-y\wt x$ and $\abd{x}{y} := x\bt y-y\bt x$. These have the following form:
\begin{align}
D_x[y,z] &= [D_xy,z]+[y,D_xz],\label{PL1'}\\
D_{[x,y]} &= [D_x,D_y]-D_{\ab{x}{y}},\label{PL2'}\\
E_x[y,z] &= [E_xy,z]+[y,E_xz],\label{IPL1'}\\
E_{[x,y]} &= [D_x,E_y]+[E_x,D_y]-E_{\ab{x}{y}}-D_{\abd{x}{y}}\label{IPL2'}.
\end{align} 

We have $[[x,y],z]\bt w = E_{[[x,y],z]}(w)$, hence we study $E_{[[x,y],z]}$. \medskip

The first path $(A)$ uses \eqref{IPL2'}. Hence, we have
\begin{equation}\label{eq1}
E_{[[x,y],z]}\overset{\eqref{IPL2'}}{=}\underbrace{[D_{[x,y]},E_z]+[E_{[x,y]},D_z]}+(\underbrace{
   -E_{\ab{[x,y]}{z}}-D_{\abd{[x,y]}{z}}})=:\mathcal{P}_A+\mathcal{S}_A .
\end{equation}
We compute
\begin{align*}
[D_{[x,y]},E_z] &\overset{\eqref{PL2'}}{=} \bigl[[D_x,D_y],E_z\bigr]-[D_{\ab{x}{y}},E_z],\\
[E_{[x,y]},D_z] &\overset{\eqref{IPL2'}}{=} \bigl[[D_x,E_y],D_z\bigr]+\bigl[[E_x,D_y],D_z\bigr]-[E_{\ab{x}{y}},D_z]-[D_{\abd{x}{y}},D_z].
\end{align*}
We also have
\begin{align*}
\ab{[x,y]}{z}  &= [x,y]\wt z-z\wt[x,y]\overset{\eqref{PL1'}}{=} D_{[x,y]}z-\bigl([z\wt x,y]+[x,z\wt y]\bigr),\\
\abd{[x,y]}{z} &= [x,y]\bt z-z\bt[x,y]\overset{\eqref{IPL1'}}{=} E_{[x,y]}z-\bigl([z\bt x,y]+[x,z\bt y]\bigr),
\end{align*}
so that 
\begin{align*}
  -E_{\ab{[x,y]}{z}}  &= -E_{D_{[x,y]}z}+E_{[z\wt x,\,y]}+E_{[x,\,z\wt y]},\\
  -D_{\abd{[x,y]}{z}} &= -D_{E_{[x,y]}z}+D_{[z\bt x,\,y]}+D_{[x,\,z\bt y]}.
\end{align*}
Furthermore, we have
\begin{align*}
E_{[z\wt x,y]} &\overset{\eqref{IPL2'}}{=} [D_{z\wt x},E_y]+[E_{z\wt x},D_y]-E_{\ab{z\wt x}{y}}-D_{\abd{z\wt x}{y}},\\
E_{[x,z\wt y]} &\overset{\eqref{IPL2'}}{=}[D_x,E_{z\wt y}]+[E_x,D_{z\wt y}]-E_{\ab{x}{z\wt y}}-D_{\abd{x}{z\wt y}},\\
D_{[z\bt x,y]} &\overset{\eqref{PL2'}}{=} [D_{z\bt x},D_y]-D_{\ab{z\bt x}{y}},\\
D_{[x,z\bt y]} &\overset{\eqref{PL2'}}{=} [D_x,D_{z\bt y}]-D_{\ab{x}{z\bt y}}.
\end{align*}
We also compute
\begin{align*}
D_{[x,y]}z &\overset{\eqref{PL2'}}{=} x\wt(y\wt z)-y\wt(x\wt z)-\ab{x}{y}\wt z,\\
E_{[x,y]}z &\overset{\eqref{IPL2'}}{=} x\wt(y\bt z)+x\bt(y\wt z)-y\wt(x\bt z)-y\bt(x\wt z)-\abd{x}{y}\wt z-\ab{x}{y}\bt z,
\end{align*}
hence \eqref{eq1} reads $E_{[[x,y],z]}=\mathcal{P}_A+\mathcal{S}_A$, where:
\begin{align*}
\mathcal P_A&= \underbrace{\bigl[[D_x,D_y],E_z\bigr]+\bigl[[D_x,E_y],D_z\bigr]+\bigl[[E_x,D_y],D_z\bigr]}_{\text{(i)}}\quad\underbrace{-\,[D_{\ab{x}{y}},E_z]-[E_{\ab{x}{y}},D_z]-[D_{\abd{x}{y}},D_z]}_{\text{(ii)}}\\[2pt]
&\quad \underbrace{+\,[D_{z\wt x},E_y]+[E_{z\wt x},D_y]+[D_{z\bt x},D_y]+[D_x,E_{z\wt y}]+[E_x,D_{z\wt y}]+[D_x,D_{z\bt y}]}_{\text{(iii)}}\, 
\end{align*}
and 
\begin{align*}
\mathcal S_A &= -E_{\,D_{[x,y]}z\,+\,\ab{z\wt x}{y}\,+\,\ab{x}{z\wt y}}-D_{\,E_{[x,y]}z\,+\,\abd{z\wt x}{y}+\abd{x}{z\wt y}+\ab{z\bt x}{y}+\ab{x}{z\bt y}} .
\end{align*}

\medskip
The second path $(B)$ uses Jacobi. Hence, we have $ E_{[[x,y],z]}=E_{[x,[y,z]]}-E_{[y,[x,z]]}$. We compute the first addend, the second will be obtained by interchanging $x$ with $y$. We have
\begin{equation}\label{eq2}
E_{[x,[y,z]]}\overset{\eqref{IPL2'}}{=}[D_x,E_{[y,z]}]+[E_x,D_{[y,z]}]+(-E_{\ab{x}{[y,z]}}-D_{\abd{x}{[y,z]}}).
\end{equation}
We will denote by $\mathcal{P}_{B}$ the first two addends of \eqref{eq2} together with the other two coming from interchanging $x$ with $y$, and by $\mathcal{S}_{B}$ the third and fourth addends and the two coming from interchanging $x$ with $y$.

\begin{align*}
[D_x,E_{[y,z]}] &\overset{\eqref{IPL2'}}{=}\bigl[D_x,[D_y,E_z]\bigr]+\bigl[D_x,[E_y,D_z]\bigr]-[D_x,E_{\ab{y}{z}}]-[D_x,D_{\abd{y}{z}}],\\
  [E_x,D_{[y,z]}] &\overset{\eqref{PL2'}}{=} \bigl[E_x,[D_y,D_z]\bigr]-[E_x,D_{\ab{y}{z}}].
\end{align*}
We also compute 
\begin{align*}
  \ab{x}{[y,z]}  &\overset{\eqref{PL1'}}{=} x\wt[y,z]-[y,z]\wt x=[x\wt y,z]+[y,x\wt z]-D_{[y,z]}x,\\
  \abd{x}{[y,z]} &\overset{\eqref{IPL1'}}{=} x\bt[y,z]-[y,z]\bt x=[x\bt y,z]+[y,x\bt z]-E_{[y,z]}x,
\end{align*}
so that
\begin{align*}
  -E_{\ab{x}{[y,z]}}  &= -E_{[x\wt y,z]}-E_{[y,x\wt z]}+E_{D_{[y,z]}x},\\
  -D_{\abd{x}{[y,z]}} &= -D_{[x\bt y,z]}-D_{[y,x\bt z]}+D_{E_{[y,z]}x}.
\end{align*}
Moreover, we have
\begin{align*}
E_{[x\wt y,z]} &\overset{\eqref{IPL2'}}{=} [D_{x\wt y},E_z]+[E_{x\wt y},D_z]-E_{\ab{x\wt y}{z}}-D_{\abd{x\wt y}{z}},\\
E_{[y,x\wt z]} &\overset{\eqref{IPL2'}}{=}[D_y,E_{x\wt z}]+[E_y,D_{x\wt z}]-E_{\ab{y}{x\wt z}}-D_{\abd{y}{x\wt z}},\\
D_{[x\bt y,z]} &\overset{\eqref{PL2'}}{=} [D_{x\bt y},D_z]-D_{\ab{x\bt y}{z}},\\
D_{[y,x\bt z]} &\overset{\eqref{PL2'}}{=} [D_y,D_{x\bt z}]-D_{\ab{y}{x\bt z}} .
\end{align*}
Therefore, we get
\begin{align*}
\mathcal P_B&= \Bigl\{\bigl[D_x,[D_y,E_z]\bigr]+\bigl[D_x,[E_y,D_z]\bigr]+\bigl[E_x,[D_y,D_z]\bigr]\\
&\qquad -[D_x,E_{\ab{y}{z}}]-[D_x,D_{\abd{y}{z}}]-[E_x,D_{\ab{y}{z}}]\\
&\qquad -[D_{x\wt y},E_z]-[E_{x\wt y},D_z]-[D_{x\bt y},D_z]\\
  &\qquad -[D_y,E_{x\wt z}]-[E_y,D_{x\wt z}]-[D_y,D_{x\bt z}]
     \Bigr\}-\bigl(x\leftrightarrow y\bigr),
\end{align*}
and
\[
  \mathcal S_B
  = E_{\,\Pi(x,y)-\Pi(y,x)}\;+\;D_{\,\Pi^{\bullet}(x,y)-\Pi^{\bullet}(y,x)},
\]
where
\begin{align*}
\Pi(x,y)&:= D_{[y,z]}x+\ab{x\wt y}{z}+\ab{y}{x\wt z},\\
\Pi^{\bullet}(x,y)&:= E_{[y,z]}x+\abd{x\wt y}{z}+\abd{y}{x\wt z}+\ab{x\bt y}{z}+\ab{y}{x\bt z}.
\end{align*}
We now compare path $(A)$ with path $(B)$. By Jacobi in $\mathrm{End}$, we have that the addends in $(i)$ of $\mathcal{P}_{A}$ are :
\begin{align*}
\bigl[[D_x,D_y],E_z\bigr] &= \bigl[D_x,[D_y,E_z]\bigr]-\bigl[D_y,[D_x,E_z]\bigr],\\
\bigl[[D_x,E_y],D_z\bigr] &= \bigl[D_x,[E_y,D_z]\bigr]-\bigl[E_y,[D_x,D_z]\bigr],\\
\bigl[[E_x,D_y],D_z\bigr] &= \bigl[E_x,[D_y,D_z]\bigr]-\bigl[D_y,[E_x,D_z]\bigr].
\end{align*}
The six terms on the right are exactly the first three in $\mathcal{P}_{B}$ and the three obtained by interchanging $x$ with $y$. From the third line of $\mathcal{P}_{B}$ with the corresponding terms obtained by interchanging $x$ with $y$, we get
\[
-[D_{x\wt y-y\wt x},E_z]-[E_{x\wt y-y\wt x},D_z]-[D_{x\bt y-y\bt x},D_z]=-[D_{\ab{x}{y}},E_z]-[E_{\ab{x}{y}},D_z]-[D_{\abd{x}{y}},D_z],
\]
which is $(ii)$ in $\mathcal{P}_{A}$. From the second line of $\mathcal{P}_{B}$ with the corresponding terms obtained interchanging $x$ with $y$ we get
\begin{align*}
&-[D_x,E_{y\wt z}]+[D_x,E_{z\wt y}]-[D_x,D_{y\bt z}]+[D_x,D_{z\bt y}]-[E_x,D_{y\wt z}]+[E_x,D_{z\wt y}]\\&+[D_y,E_{x\wt z}]-[D_y,E_{z\wt x}]+[D_y,D_{x\bt z}]-[D_y,D_{z\bt x}]+[E_y,D_{x\wt z}]-[E_y,D_{z\wt x}],
\end{align*}
while the fourth line of $\mathcal{P}_{B}$ with the corresponding terms obtained interchanging $x$ with $y$ gives
\[
-[D_y,E_{x\wt z}]-[E_y,D_{x\wt z}]-[D_y,D_{x\bt z}]\;+\;[D_x,E_{y\wt z}]+[E_x,D_{y\wt z}]+[D_x,D_{y\bt z}] .
\]
Summing with the previous one, it remains
\[
[D_x,E_{z\wt y}]+[D_x,D_{z\bt y}]+[E_x,D_{z\wt y}]-[D_y,E_{z\wt x}]-[D_y,D_{z\bt x}]-[E_y,D_{z\wt x}]
\]
which is $(iii)$ in $\mathcal{P}_{A}$. We have shown that $\mathcal{P}_{A}=\mathcal{P}_{B}$.

From the relations for post-Lie algebras \cite[Theorem 6.1]{DG}, we already know that 
\[
-D_{[x,y]}z-\ab{z\wt x}{y}-\ab{x}{z\wt y}=\Pi(x,y)-\Pi(y,x).
\]
Finally, we have

\begin{align*}
-E_{[x,y]}z-&\abd{z\wt x}{y}-\abd{x}{z\wt y}-\ab{z\bt x}{y}-\ab{x}{z\bt y}\\
&\overset{\eqref{IPL2'}}{=}-x\wt(y\bt z)-x\bt(y\wt z)+y\wt(x\bt z)+y\bt(x\wt z)\\
&+(x\bt y)\wt z-(y\bt x)\wt z+(x\wt y)\bt z-(y\wt x)\bt z\\
&-(z\wt x)\bt y+y\bt(z\wt x)-x\bt(z\wt y)+(z\wt y)\bt x\\
&-(z\bt x)\wt y+y\wt(z\bt x)-x\wt(z\bt y)+(z\bt y)\wt x\\&=\Pi^{\bullet}(x,y)-\Pi^{\bullet}(y,x).
\end{align*}
Therefore, $\mathcal{S}_{A}=\mathcal{S}_{B}$. Hence, all critical pairs are confluent, and $\xi_3$ is an isomorphism, concluding the proof. 
\end{proof}

\section{Outlook}\label{sec:outlook}
We conclude this article with a selection of prospective applications of our results that we intend to investigate in the near future. \\ \\
\textbf{Quantization of post-Lie bialgebras}: recall that a Lie bialgebra is a triple $(\mathfrak{b},[\cdot,\cdot],\delta)$, where $(\mathfrak{b},[\cdot,\cdot])$ is a Lie algebra, $(\mathfrak{b},\delta)$ is a Lie coalgebra, and the cocycle condition $\delta([x,y])= x \cdot\delta(y) - y \cdot \delta(x)$ holds for all $x,y \in \mathfrak{b}$. A topological Hopf algebra $(H,\Delta)$ is said to be a quantization of a Lie bialgebra $(\mathfrak{b},[\cdot,\cdot],\delta)$ if there is a Hopf algebra isomorphism $H/\hbar \cdot H \cong \mathrm{U}(\mathfrak{b})$ such that 
\[ \delta(x) = \frac{\Delta(\tilde{x}) - \Delta^{\mathrm{op}}(\tilde{x})}{\hbar} \ \mod \hbar\]
where $\tilde{x}$ is any lift of $x$ in $H$. The problem of quantization of Lie bialgebras was posed by V. Drinfeld \cite{Drinfeld90} and solved by P. Etingof and D. Kazhdan \cite{EK} (see also P. \v{S}evera's solution \cite{Sev}). Since the universal enveloping algebra of a post-Lie algebra carries a natural structure of a post-Hopf algebra (see Theorem \ref{thm:postHopfonuniversalenveloping}), it is natural to ask whether post structures are compatible with quantization. We expect that, if  $(\mathfrak{b},[\cdot,\cdot],\delta)$ is a Lie bialgebra together with a \emph{compatible} post-Lie structure  $(\mathfrak{b},[\cdot,\cdot],\wt)$, there is a topological post-Hopf algebra $(H,\Delta,\wp)$ quantizing  $(\mathfrak{b},[\cdot,\cdot],\delta)$ and such that the \emph{semiclassical limit} of $\wp$ is $\wt$. \\ \\
\textbf{Infinitesimal post-Lie-Rinehart algebras and post-Hopf algebroids}: two well-established generalisations of the concepts of Lie algebra and Hopf algebra are the well-known notions of Lie-Rinehart algebra \cite{Rinehart} and of Hopf algebroid \cite{Lu}. 
Recently, A. B. Laurent, Y. Li, and Y. Sheng \cite{LLS} proposed an \emph{oid} version of post-Lie algebras and post-Hopf algebras. They also showed that the universal enveloping algebra of a post-Lie-Rinehart algebra carries a natural structure of post-Hopf algebroid, and, moreover, they provided the construction of the free post-Lie-Rinehart algebra. \\
A very natural follow-up would be to use the same techniques to define the notions of infinitesimal post-Lie-Rinehart algebra and infinitesimal post-Hopf algebroid, and to discuss their geometric interpretation.\\ \\
\textbf{Classification of infinitesimal post-Lie algebra structures on $\mathfrak{t}(2,\mathbb{C})$ and $\mathfrak{gl}(2,\mathbb{C})$}: In Section \ref{subsection-inf-post-lie-sl2}, we have classified infinitesimal post-Lie structures on the Lie algebra $\mathfrak{sl}(2)$. Our classification builds upon the earlier classification of post-Lie structures obtained by D. Burde, K. Dekimpe, and K. Vercammen \cite{BDV}. Similar classification results are available for other Lie algebras. For instance, post-Lie structures on the Lie algebra $\mathfrak{t}(2,\mathbb{C})$ of complex upper triangular matrices were classified by X. Tang and Y. Zhang \cite{TZ}, while those on $\mathfrak{gl}(2,\mathbb{C})$ were classified by Y. Sheng and X. Tang \cite{ST}. A natural direction for future research is to use these classifications to determine the corresponding infinitesimal post-Lie structures on $\mathfrak{t}(2,\mathbb{C})$ and $\mathfrak{gl}(2,\mathbb{C})$.\\ \\ 

\noindent\textit{Acknowledgments.} 
A.\ Rivezzi and A.\ Sciandra are members of the ``National Group for Algebraic and Geometric Structures and their Applications'' (GNSAGA-INdAM). A.\ Rivezzi is supported by GA\v{C}R/NCN grant Quantum Geometric Representation Theory and Noncommutative Fibrations 24-11728K.
A. Sciandra is supported by a postdoctoral fellowship at the ULB within the framework of the PDR project ``Reconstruction of modules and algebraic objects from closed and monoidal structures on their representation categories'' funded by the FNRS under the grant number T.0318.25F (PI J. Vercruysse).
T. Weber is supported by the GA\v{C}R
PIF 24-11324I. This publication is based upon work from COST Action CaLISTA CA21109 supported by COST
(European Cooperation in Science and Technology), www.cost.eu.

\bigskip
\noindent\textbf{Contacts}
\medskip

\noindent
Andrea Rivezzi:
\texttt{andrea.rivezzi@matfyz.cuni.cz}\\
\url{https://sites.google.com/view/andrearivezzipersonalwebpage/}

\medskip
\noindent
Andrea Sciandra: \texttt{andrea.sciandra@ulb.be}\\
\url{www.andreasciandra.com}

\medskip
\noindent
Thomas Weber: \texttt{thomas.weber@matfyz.cuni.cz}\\
\url{https://sites.google.com/view/thomasweber}

\end{document}